\newif\ifdviwin
\newif\ifdviwin
\def\cA{\mathcal{A}}
\def\cL{\mathcal{L}}
\def\ch{\mathfrak{h}}
\def\cU{\mathcal{U}}
\let\hat=\widehat
\let\landa=\lambda
\let\alfa=\alpha
\let\parc=\partial
\let\ep=\varepsilon
\def\landa{\lambda}
\def\flecha{\rightarrow}
\def\esiz{\langle}
\def\esde{\rangle}
\def\cte.{\mathop{\rm cte.}\nolimits}
\def\div{\mathop{\rm div }\nolimits}
\def\R{\mathbb{R}}
\def\cH{\mathcal{H}}
\def\D{\mathbb{D}}
\def\S{\mathbb{S}}
\def\X{\mathfrak{X}}
 \newtheorem{defi}{Definition}
 \newtheorem{teo}[defi]{Theorem}
 \newtheorem{pro}[defi]{Proposition}
 \newtheorem{cor}[defi]{Corollary}
 \newtheorem{lem}[defi]{Lemma}
 \newtheorem{remark}[defi]{Remark}
 \newenvironment{proof}{\rm \trivlist \item[\hskip \labelsep{\it
      Proof}:]}{\par\nopagebreak \hfill $\Box$ \endtrivlist}
\numberwithin{equation}{section}
\numberwithin{figure}{section}
\numberwithin{defi}{section}
\begin{document}

\mbox{}\vspace{0.4cm}
\begin{center}
\rule{14cm}{1.5pt}\vspace{0.5cm}

\renewcommand{\thefootnote}{\,}
{\Large \bf The global geometry of surfaces with \\[0.2cm] prescribed mean curvature in $\R^3$} 
\\ \vspace{0.5cm} {\large Antonio Bueno$^a$, José A.
Gálvez$^b$ and Pablo Mira$^c$}\\ \vspace{0.3cm} \rule{14cm}{1.5pt}
\end{center}
  \vspace{1cm}
$\mbox{}^a,^b$ Departamento de Geometría y Topología, Universidad de Granada,
E-18071 Granada, Spain. \\ e-mail: jabueno@ugr.es, jagalvez@ugr.es \vspace{0.2cm}

\noindent $\mbox{}^c$ Departamento de Matemática Aplicada y Estadística,
Universidad Politécnica de Cartagena, E-30203 Cartagena, Murcia, Spain. \\
e-mail: pablo.mira@upct.es \vspace{0.3cm}

 \begin{abstract}
We develop a global theory for complete hypersurfaces in $\R^{n+1}$ whose mean curvature is given as a prescribed function of its Gauss map.  This theory extends the usual one of constant mean curvature hypersurfaces in $\R^{n+1}$, and also that of self-translating solitons of the mean curvature flow. For the particular case $n=2$, we will obtain results regarding a priori height and curvature estimates, non-existence of complete stable surfaces, and classification of properly embedded surfaces with at most one end. \let\thefootnote\relax\footnote{\hspace{-.75cm} Mathematics Subject
Classification: 53A10, 53C42}
 \end{abstract}

\section{Introduction}

Let $\cH$ be a $C^1$ function on the sphere $\S^n$. We will say that an immersed oriented hypersurface $\Sigma$ in $\R^{n+1}$ has \emph{prescribed mean curvature $\cH$} if its mean curvature function $H_{\Sigma}$ is given by 
 \begin{equation}\label{presH}
H_{\Sigma}=\cH\circ \eta,
\end{equation}
where $\eta:\Sigma\flecha \S^n$ is the Gauss map of $\Sigma$. For short, we will simply say that $\Sigma$ is an $\cH$-hypersurface. Obviously, when $\cH$ is constant, $\Sigma$ is a hypersurface of constant mean curvature (CMC).

The study of hypersurfaces in $\R^{n+1}$ defined by a prescribed curvature function in terms of the Gauss map goes back, at least, to the famous Christoffel and Minkowski problems for ovaloids (see e.g. \cite{C}). The existence and uniqueness of ovaloids of prescribed mean curvature in $\R^{n+1}$ was studied among others by Alexandrov and Pogorelov in the 1950s (see \cite{Al, Po}), and more recently in \cite{GG}. However, the geometry of complete hypersurfaces of prescribed mean curvature $\cH$ in $\R^{n+1}$ remains largely unexplored. The most studied situation is, obviously, the case of complete CMC hypersurfaces in $\R^{n+1}$. Recently, the global geometry of self-translating solitons of the mean curvature flow (which correspond to $\cH$-hypersurfaces for the particular choice $\cH(x)=\esiz x,e_{n+1}\esde$) is being studied in detail, see e.g. \cite{DDN, Il, IR,MPGSHS, MSHS,SX}.

Our objective in this paper is to develop a global theory of complete $\cH$-hypersurfaces in $\R^{n+1}$, taking as a starting point the well-studied global theory of CMC hypersurfaces in $\R^{n+1}$. We will be specially interested in the case of complete $\cH$-surfaces in $\R^3$.

The following are three trivial but fundamental properties of $\cH$-hypersurfaces in $\R^{n+1}$, for a given $\cH\in C^1(\S^n)$: (1) any translation of an $\cH$-hypersurface is also an $\cH$-hypersurface; (2) $\cH$-hypersurfaces are locally modeled by a quasilinear elliptic PDE when viewed as local graphs in $\R^{n+1}$ over each tangent hyperplane, and consequently they obey the maximum principle; and (3) any symmetry of $\cH$ in $\S^n$ induces a linear isometry of $\R^{n+1}$ that preserves $\cH$-hypersurfaces in $\R^{n+1}$. 

These three properties make many ideas of classical CMC hypersurface theory in $\R^{n+1}$ readily available for the case of $\cH$-hypersurfaces, for a non-constant function $\cH\in C^1(\S^n)$. Nonetheless, the class of $\cH$-hypersurfaces is indeed much wider and richer, and new ideas are needed for its study. For example, $\cH$-hypersurfaces in $\R^{n+1}$ do not come in general associated to a variational problem, and this makes many of the most useful techniques of CMC theory unavailable. Also, in contrast with CMC surfaces in $\R^3$, there is no holomorphic object associated to $\cH$-surfaces in $\R^3$ for non-constant $\cH\in C^1(\S^2)$, and so, no clear integrable systems approach to their study. Moreover, in jumping from the CMC condition to the equation of prescribed non-constant mean curvature, one also needs to account for the loss of symmetries and isotropy of the resulting equation. For instance, one can only apply the Alexandrov reflection principle for directions with respect to which $\cH$ is symmetric.

Some of these difficulties can already be seen in the case of rotational $\cH$-surfaces in $\R^3$. In the paper \cite{BGM} we present many examples of rotational $\cH$-surfaces in $\R^3$ with no CMC counterpart: for instance, complete, convex $\cH$-graphs converging to a cylinder, or properly embedded disks asymptotically wiggling around a cylinder. For the case of rotational $\cH$-surfaces of non-trivial topology, we show in \cite{BGM} examples with a \emph{wing-like} shape, or with two strictly convex ends pointing in opposite directions. Many of these examples can also be constructed so that they self-intersect. All this variety, just for the very particular class of rotational $\cH$-surfaces in $\R^3$, shows that the class of $\cH$-hypersurfaces in $\R^{n+1}$ is indeed very large, and rich in what refers to possible examples and geometric behaviors. It also accounts for the difficulty of obtaining classification results for the case where $\cH$ is not constant, and justifies the need of imposing some additional regularity, positivity or symmetry properties to $\cH$ in order to derive such classification theorems.

Despite these difficulties, the results in the present paper show that, under mild assumptions, the class of $\cH$-surfaces in $\R^3$ can sometimes be given a homogeneous treatment for general classes of prescribed mean curvature functions $\cH\in C^1(\S^2)$.

In the rest of this introduction we will explain the organization of the paper, and state some of our main results.

In {\bf Section \ref{sec:2}} we will study some basic properties of $\cH$-hypersurfaces in $\R^{n+1}$. 
In Proposition \ref{hpos} we prove that if $\cH(x_0)=0$ for some $x_0\in \S^n$, then there are no compact $\cH$-hypersurfaces in $\R^{n+1}$. In Proposition \ref{ale} and Corollary \ref{3planes} we give some Alexandrov-type results using moving planes in the case that the function $\cH$ has enough symmetries. 
In Section \ref{sec:constru} we will classify flat $\cH$-hypersurfaces. In Section \ref{sec:compac} we will prove a compactness theorem for the space of $\cH$-surfaces in $\R^3$ with bounded second fundamental form, with $\cH$ not necessarily fixed. 

%
%
In {\bf Section \ref{sec:structure}} we study the geometry of properly embedded $\cH$-surfaces of finite topology in $\R^3$, motivated by some well-known results by Meeks \cite{Me} and Korevaar-Kusner-Solomon \cite{KKS} for the case where $\cH$ is a positive constant. In \cite{Me}, Meeks proved that there are no properly embedded surfaces of positive CMC in $\R^3$ with finite topology and exactly one end. For that, he first obtained universal height estimates for (not necessarily compact) CMC graphs in $\R^3$ with planar boundary. In \cite{KKS} it was proved that any CMC surface in $\R^3$ in the conditions above, but this time with \emph{two} ends, is a rotational surface.

For the general case of $\cH$-surfaces in $\R^3$ with $\cH\in C^1(\S^2)$, $\cH>0$, these statements are not true in general. However, our main results in Section \ref{sec:structure} give natural additional conditions on $\cH$ under which Meeks' results hold for $\cH$-surfaces.

Specifically, in Theorem \ref{th:hees} we will give some very general sufficient conditions on $\cH$ for the existence of a universal height estimate for $\cH$-graphs with respect to a given direction $v\in \S^2$ in $\R^3$. 
The way of proving these height estimates is completely different from the one used by Meeks, and relies on a previous curvature estimate for surfaces in $\R^3$ in the spirit of a result on CMC surfaces in Riemannian three-manifolds by Rosenberg, Sa Earp and Toubiana \cite{RST}; see Theorem \ref{th:curv}.

As corollaries to Theorem \ref{th:hees}, in Section \ref{estru} we will provide several results about the geometry of properly embedded $\cH$-surfaces in $\R^3$ of finite topology and one end, in the case that $\cH>0$ is invariant under one, two or three reflections in $\S^2$. For instance, in the case that $\cH$ is invariant with respect to three linearly independent reflections in $\S^2$, we will prove (Theorem \ref{3sim}) that any properly embedded $\cH$-surface in $\R^3$ of finite topology and at most one end is the Guan-Guan sphere $S_{\cH}$ associated to $\cH$, see \cite{GG}.

In {\bf Section \ref{sec:stable}} we study \emph{stability} of $\cH$-hypersurfaces. As we already mentioned, except for some very particular cases, the equation describing $\cH$-hypersurfaces in $\R^{n+1}$ is \emph{not} the Euler-Lagrange equation of some variational problem. Thus, the way in which we introduce the concept of stability for $\cH$-hypersurfaces (Definition \ref{def:stabH}) is not in a variational way, but in connection with the \emph{linearized equation} of the $\cH$-graph equation in $\R^{n+1}$; see Proposition \ref{varcar}. In this way we obtain, for any $\cH$-hypersurface $\Sigma$ in $\R^{n+1}$ a linear \emph{stability operator} of the form 

 \begin{equation}\label{opL}
\cL=\Delta + \esiz \nabla \cdot, X\esde + q
 \end{equation} for some $X\in \X(\Sigma)$ and $q\in C^{2}(\Sigma)$. 
 
 

After defining stability associated to $\cL$ in the natural way, we will focus on stable $\cH$-surfaces $\Sigma$ in $\R^3$, and will seek radius and curvature estimates for $\Sigma$. In the CMC case, Ros and Rosenberg proved in \cite{RoRos} that the intrinsic distance from a point $p$ in a stable CMC surface $\Sigma$ to $\partial \Sigma$ is less than $\pi/H$, where $H>0$ is the mean curvature of $\Sigma$. In \cite{Maz}, Mazet improved the previous result obtaining the optimal estimate $\pi /(2H)$.

Our approach to proving this type of radius estimates for stable $\cH$-surfaces will be different, and based on arguments introduced by Fischer-Colbrie \cite{FC} and López-Ros \cite{LR} for non-negative \emph{Schrodinger operators}, i.e. operators of the form \eqref{opL} for $X=0$. We will also make use of an argument by Galloway and Schoen \cite{GaSc}, which will let us bound the (non-Schrodinger) stability operator $\cL$ in \eqref{opL} by a non-negative Schrodinger-type one; see Lemma \ref{lemgs}.

By using these ideas, we will prove (Theorem \ref{deste}) that there exists a uniform radius estimate for stable $\cH$-surfaces in $\R^3$, in the case that $\cH\in C^2(\S^2)$ is positive and satisfies a certain additional condition, see inequality \eqref{estrella}. As a trivial consequence we obtain (Corollary \ref{cor:estrella}): \emph{if $\cH\in C^2(\S^2)$ satisfies condition \eqref{estrella}, there are no complete stable $\cH$-surfaces in $\R^3$.}

It is important to observe here that some condition on $\cH>0$ is needed in order to prove a radius estimate for stable $\cH$-surfaces. For instance, for some radially symmetric choices of $\cH>0$, there exist complete, non-entire, strictly convex $\cH$-graphs in $\R^3$.

Also, in Theorem \ref{testicu} we will prove a curvature estimate, of the form $$
|\sigma(p)|d_{\Sigma}(p,\partial \Sigma)\leq C,
$$ for stable $\cH$-surfaces $\Sigma$ in $\R^3$, where $\cH\in C^2(\S^2)$ satisfies \eqref{estrella}; here $\sigma$ is the second fundamental form of $\Sigma$. For the case of minimal surfaces and CMC surfaces in $\R^3$, such estimates were obtained by Schoen \cite{Sc} and Berard-Hauswrith \cite{BH}, respectively.

\vspace{0.3cm}

{\bf Acknowledgements:} This work is part of the PhD thesis of the first author. The authors are grateful to José M. Espinar, Francisco Martín, Joaquín Pérez and Francisco Torralbo for helpful comments during the preparation of this manuscript. 
\section{Basic properties of $\cH$-hypersurfaces}\label{sec:2}

\subsection{Maximum and tangency principles}

Locally, $\cH$-hypersurfaces in $\R^{n+1}$ for some given $\cH\in C^1(\S^n)$ are governed by an elliptic, second order quasilinear PDE. Specifically, let $\cH\in C^1(\S^n)$, let $\Sigma$ denote an $\cH$-hypersurface, and take any $q\in \Sigma$. Then, if we view $\Sigma$ around $q$ as an upwards-oriented graph $x_{n+1}=u(x_1,\dots, x_n)$ with respect to coordinates $x_i$ in $\R^{n+1}$ such that $\{\parc_1,\dots,\parc_{n+1}\}$ is a positively oriented orthonormal frame of $\R^{n+1}$, equation \eqref{presH} shows that the function $u$ is a solution to 

\begin{equation}\label{eqH}
{\rm div}\left(\frac{Du}{\sqrt{1+|Du|^2}}\right) = n \cH (Z_u), \hspace{1cm} Z_u:= \frac{(-Du,1)}{\sqrt{1+|Du|^2}},
\end{equation}
where $\div, D$ denote respectively the divergence and gradient operators on $\R^n$. In particular, $\cH$-hypersurfaces satisfy the Hopf maximum principle (both in its interior and boundary versions), a property that we will use geometrically in the following way:

\begin{lem}\label{tan}
%
Given $\cH\in C^1(\S^n)$, let $\Sigma_1,\Sigma_2$ be two embedded $\cH$-hypersurfaces, possibly with smooth boundary. Assume that one of the following two conditions holds:
\begin{enumerate}
\item
There exists $p\in {\rm int}(\Sigma_1)\cap {\rm int}(\Sigma_2)$ such that $\eta_1(p)=\eta_2(p)$, where $\eta_i:\Sigma_i\flecha \S^n$ is the unit normal of $\Sigma_i$, $i=1,2$.
\item
There exists $p\in \parc \Sigma_1 \cap \parc \Sigma_2$ such that $\eta_1(p)=\eta_2(p)$ and $\nu_1(p)=\nu_2(p)$, where $\nu_i$ denotes the interior unit conormal of $\parc \Sigma_i$.
\end{enumerate}
Assume moreover that $\Sigma_1$ lies around $p$ at one side of $\Sigma_2$. Then $\Sigma_1=\Sigma_2$.
\end{lem}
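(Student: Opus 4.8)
The plan is to reduce the geometric statement to the PDE maximum principle for equation \eqref{eqH}, handling the interior case (condition 1) and the boundary case (condition 2) simultaneously by writing both surfaces as graphs over a common domain. First I would fix the point $p$ and choose coordinates $x_1,\dots,x_{n+1}$ in $\R^3$ (resp. $\R^{n+1}$) so that $\{\parc_1,\dots,\parc_{n+1}\}$ is a positively oriented orthonormal frame with $\parc_{n+1}$ equal to the common unit normal $\eta_1(p)=\eta_2(p)$. Since $\eta_i(p)=\parc_{n+1}$, the tangent hyperplane to each $\Sigma_i$ at $p$ is horizontal, and by the implicit function theorem each $\Sigma_i$ can be written, in a neighborhood of $p$, as an upwards-oriented graph $x_{n+1}=u_i(x_1,\dots,x_n)$ over a common domain $U\subset\R^n$ containing the projection of $p$; by \eqref{eqH} each $u_i$ solves the same quasilinear elliptic equation $\div\!\left(Du_i/\sqrt{1+|Du_i|^2}\right)=n\,\cH(Z_{u_i})$.

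Next I would invoke the hypothesis that $\Sigma_1$ lies at one side of $\Sigma_2$ near $p$: after possibly relabelling, this means $u_1\le u_2$ (or $u_1\ge u_2$) throughout $U$, with equality at the projection of $p$. In the interior case (condition 1) the point projects to an interior point of $U$ and $u_1,u_2$ agree there to first order as well (same normal), so the interior Hopf maximum principle, applied to the difference $w=u_2-u_1$ which satisfies a linear homogeneous elliptic equation with no zeroth-order term obtained by subtracting the two instances of \eqref{eqH} and using that $\cH$ is $C^1$, forces $w\equiv 0$ on $U$, i.e. $\Sigma_1$ and $\Sigma_2$ coincide near $p$. In the boundary case (condition 2), the additional hypothesis $\nu_1(p)=\nu_2(p)$ guarantees that, in the chosen coordinates, the boundary arcs $\parc\Sigma_i$ project to the same hypersurface through $p$ inside $\partial U$ and that the graphs approach it from the same side with matching conormal; then the boundary Hopf lemma (Hopf point lemma) applied to $w$ rules out a strict sign for the normal derivative at $p$ unless $w\equiv 0$, again yielding coincidence near $p$. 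Finally, a standard connectedness argument propagates the local coincidence: the set of points of $\Sigma_1$ lying on $\Sigma_2$ with matching normals (and, on the boundary, matching conormals) is nonempty, closed, and — by the local argument just given — open, hence all of $\Sigma_1$, so $\Sigma_1=\Sigma_2$.

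The linearization step deserves a word of care, since it is where the structure of \eqref{eqH} really enters: writing $\mathcal{Q}(u)=\div\!\left(Du/\sqrt{1+|Du|^2}\right)-n\,\cH(Z_u)$, one has $\mathcal{Q}(u_2)-\mathcal{Q}(u_1)=Lw$ where $L$ is a second-order linear elliptic operator with continuous coefficients (obtained by integrating the derivative of $\mathcal{Q}$ along the segment $tu_2+(1-t)u_1$), and crucially $L$ has \emph{no} zeroth-order term: the quasilinear operator depends on $u$ only through $Du$, and the term $n\,\cH(Z_u)$ depends on $u$ only through $Du$ as well (via $Z_u$), so differentiating produces only first-order contributions. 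This is exactly what makes both the interior strong maximum principle and the boundary point lemma applicable without a sign condition on the potential.

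The main obstacle, and the only genuinely delicate point, is the boundary case: one must set up the coordinates and the common domain $U$ so that the two boundary hypersurfaces $\parc\Sigma_1$ and $\parc\Sigma_2$ are realized as the \emph{same} portion of $\partial U$ — this uses $\nu_1(p)=\nu_2(p)$ together with $\eta_1(p)=\eta_2(p)$ to match both the position and the ``slope'' of the boundary — and then verify that the one-sidedness hypothesis translates into $w\ge 0$ on $U$ with $w(p)=0$ and $w$ attaining its minimum at the boundary point $p$, so that Hopf's boundary lemma forces $\parc w/\parc\nu(p)>0$ unless $w\equiv0$; but $\parc w/\parc\nu(p)=0$ because the conormals agree, giving the contradiction. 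Everything else is the routine translation between the geometric picture (embedded hypersurfaces, one lying to one side of the other) and the analytic one (ordered graphs over a common domain solving \eqref{eqH}).
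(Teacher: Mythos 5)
Your proposal is correct and is exactly the standard Hopf-maximum-principle argument that the paper invokes without writing out (the lemma is stated immediately after noting that $\cH$-hypersurfaces obey the Hopf maximum principle via \eqref{eqH}, and no separate proof is given). One small imprecision in the boundary case: the fact that $\partial w/\partial\nu(p')=0$ already follows from $\eta_1(p)=\eta_2(p)$, since the normal condition forces $Du_1(p')=Du_2(p')$; the role of the hypothesis $\nu_1(p)=\nu_2(p)$ is rather to guarantee that $\partial\Sigma_1$ and $\partial\Sigma_2$ are tangent at $p$ and that both surfaces extend into the same side, so that the projections of the $\Sigma_i$ bound a common domain with an interior ball touching $\partial U$ at $p'$ — without this, $\Omega_1\cap\Omega_2$ can have a corner at $p'$ and Hopf's boundary point lemma would not apply.
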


In the case that $\cH$ is an odd function, i.e. $\cH(-x)=-\cH(x)$ for every $x\in \S^n$, any $\cH$-hypersurface is also an $\cH$-hypersurface with the opposite orientation. Hence, the tangency principle stated in Lemma \ref{tan} can be formulated in a stronger way, similar to the usual tangency principle for minimal hypersurfaces. In particular, we have:

\begin{cor}\label{tangency}
Let $\cH\in C^1(\S^n)$ satisfy $\cH(-x)=-\cH(x)$ for every $x\in \S^n$, and let $\Sigma_1,\Sigma_2$ denote two immersed $\cH$-hypersurfaces in $\R^{n+1}$. Assume that there exists $p\in \Sigma_1\cap \Sigma_2$ with $T_p\Sigma_1=T_p\Sigma_2$ and such that $\Sigma_1$ lies around $p$ at one side of $\Sigma_2$. Then $\Sigma_1=\Sigma_2$.
\end{cor}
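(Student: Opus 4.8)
The plan is to reduce Corollary \ref{tangency} to Lemma \ref{tan} by using the orientation-reversing symmetry of the $\cH$-hypersurface equation when $\cH$ is odd. First I would observe that if $\Sigma$ is an $\cH$-hypersurface with Gauss map $\eta$, then the same immersion with the reversed orientation $-\eta$ is again an $\cH$-hypersurface: indeed, reversing the orientation changes the sign of the mean curvature, $H_{\Sigma,-\eta}=-H_{\Sigma,\eta}=-\cH(\eta)=\cH(-\eta)$ using $\cH(-x)=-\cH(x)$, so \eqref{presH} holds for the reversed orientation as well. Thus each $\Sigma_i$ may be equipped with either of its two unit normals and still be an $\cH$-hypersurface.

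Next I would address the embeddedness issue, since Lemma \ref{tan} is stated for embedded hypersurfaces while Corollary \ref{tangency} allows immersed ones. This is a routine localization: the statement is about the behavior of $\Sigma_1$ and $\Sigma_2$ near the common point $p$, and the conclusion $\Sigma_1=\Sigma_2$ should be understood as coincidence of the immersions (equivalently, as a unique continuation statement — the set of points where the two immersions agree to infinite order is open and closed). So I would pass to small embedded neighborhoods $\Sigma_i'\subset\Sigma_i$ of $p$ (possible since an immersion is locally an embedding), apply the argument there to conclude the local pieces coincide, and then propagate along $\Sigma_1$ by the standard connectedness argument for solutions of the elliptic equation \eqref{eqH}.

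With these two reductions in place, the core step is the application of Lemma \ref{tan}(1). We are given $p\in \Sigma_1\cap\Sigma_2$ with $T_p\Sigma_1=T_p\Sigma_2$ and with $\Sigma_1$ lying locally on one side of $\Sigma_2$. The common tangent plane gives two possible unit normals at $p$; since $T_p\Sigma_1=T_p\Sigma_2$, after possibly reversing the orientation of one of the surfaces — which is legitimate by the first paragraph, precisely because $\cH$ is odd — we may assume $\eta_1(p)=\eta_2(p)$. We then check that the one-sidedness hypothesis of Lemma \ref{tan} is met: "lies at one side" together with the common oriented tangent plane means that, writing both surfaces locally as graphs $u_1,u_2$ over $T_p\Sigma = T_p\Sigma_1$ in the direction $\eta_i(p)$ as in \eqref{eqH}, we have $u_1\le u_2$ (or $u_1\ge u_2$) near the projection of $p$, with $u_1(p)=u_2(p)$ and $Du_1=Du_2$ there. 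Lemma \ref{tan} then yields $\Sigma_1=\Sigma_2$ on this neighborhood, and the propagation argument finishes the proof.

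The only mild subtlety — not really an obstacle — is bookkeeping the orientations: one must make sure that the orientation reversal used to match $\eta_1(p)=\eta_2(p)$ is globally consistent, i.e. that after the reversal $\Sigma_1$ is still an $\cH$-hypersurface everywhere (not just near $p$), which is exactly what the oddness of $\cH$ guarantees. Everything else is the maximum principle packaged in Lemma \ref{tan} plus unique continuation for the quasilinear elliptic equation \eqref{eqH}.
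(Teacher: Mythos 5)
Your proposal is correct and follows essentially the same route as the paper: the paper does not write out a separate proof but derives the corollary immediately from the observation that, when $\cH$ is odd, reversing the orientation of an $\cH$-hypersurface yields again an $\cH$-hypersurface, so Lemma \ref{tan} applies after matching the normals at $p$. Your additional remarks on localizing to embedded pieces and propagating by unique continuation for the quasilinear equation \eqref{eqH} simply make explicit details the paper leaves implicit.
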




It is immediate that any translation of an $\cH$-hypersurface is again an $\cH$-hypersurface. Moreover, the possible symmetries of the function $\cH$ induce further invariance properties of the class of $\cH$-hypersurfaces, as detailed in the next lemma:

\begin{lem}\label{sime}
Given $\cH\in C^1(\S^n)$, let $\Sigma$ be an $\cH$-hypersurface, and let $\Phi$ be a linear isometry of $\R^{n+1}$ such that $\cH \circ \Phi =\cH$ on $\S^n\subset \R^{n+1}$.
Then $\Sigma'=\Phi \circ \Sigma$ is an $\cH$-hypersurface in $\R^{n+1}$ with respect to the orientation given by $\eta':=d\Phi(\eta)$.
\end{lem}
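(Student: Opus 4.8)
The plan is to verify directly that the transformed immersion $\Sigma' = \Phi\circ\Sigma$ satisfies the prescribed mean curvature equation \eqref{presH} with the stated orientation. First I would record the two elementary facts about a linear isometry $\Phi$ of $\R^{n+1}$: since $\Phi$ is linear, its differential at every point is $\Phi$ itself, so if $\psi:\Sigma\to\R^{n+1}$ is the original immersion and $\psi' = \Phi\circ\psi$, then $d\psi'_q = \Phi\circ d\psi_q$ for every $q$; and since $\Phi$ is an isometry, it maps the unit normal line of $\Sigma$ at $q$ to the unit normal line of $\Sigma'$ at the corresponding point, so $\eta' := d\Phi(\eta) = \Phi\circ\eta$ is indeed a unit normal field along $\Sigma'$, and it is continuous, hence a valid choice of Gauss map.

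Next I would compare the second fundamental forms. Because $\Phi$ is a linear isometry, $\Sigma'$ is isometric to $\Sigma$ via $\Phi$, and the shape operator transforms by conjugation: writing $S$ and $S'$ for the shape operators of $\Sigma$ (w.r.t. $\eta$) and $\Sigma'$ (w.r.t. $\eta'$), one gets $S' = \Phi\circ S\circ\Phi^{-1}$ at corresponding points, because both the induced connection and the normal field are carried along by $\Phi$. Taking traces, $H_{\Sigma'}(\Phi(q)) = H_{\Sigma}(q)$ for every $q\in\Sigma$ (with the sign conventions consistent on both sides). Alternatively, and perhaps more cleanly, I would argue via the graph equation \eqref{eqH}: since $\Phi$ preserves the class of positively oriented orthonormal frames up to sign and carries local graphs to local graphs, the left-hand divergence expression is invariant, so the mean curvature at a point equals $\cH$ evaluated at the image of the old Gauss map value under $\Phi$.

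Finally I would combine these: for $q\in\Sigma$, using that $\Sigma$ is an $\cH$-hypersurface, $H_{\Sigma'}(\Phi(q)) = H_{\Sigma}(q) = \cH(\eta(q))$. Now apply the hypothesis $\cH\circ\Phi = \cH$ on $\S^n$, which gives $\cH(\eta(q)) = \cH(\Phi(\eta(q))) = \cH(\eta'(\Phi(q)))$. Hence $H_{\Sigma'} = \cH\circ\eta'$ on $\Sigma'$, i.e. $\Sigma'$ is an $\cH$-hypersurface with respect to $\eta'$, which is exactly the claim.

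I do not expect a genuine obstacle here — the statement is essentially a bookkeeping check. The only point requiring a little care is the sign/orientation tracking: one must make sure that the choice $\eta' = d\Phi(\eta)$ is consistent with the orientation convention used to write \eqref{presH} (in particular whether $\Phi$ is orientation-preserving or not), since $\cH$ need not be odd. This is handled automatically by the argument above because the identity $\cH\circ\Phi = \cH$ is imposed on all of $\S^n$, so no matter how $\Phi$ acts on normals, the prescribed value is preserved. Everything else is the standard naturality of the second fundamental form under linear isometries of the ambient space.
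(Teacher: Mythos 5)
Your argument is correct and is the natural verification that the paper leaves implicit (the paper states Lemma \ref{sime} without proof, treating it as an immediate consequence of the naturality of the shape operator under ambient linear isometries). The key chain is exactly what you write: $H_{\Sigma'}(\Phi(q)) = H_{\Sigma}(q) = \cH(\eta(q)) = \cH(\Phi(\eta(q))) = \cH(\eta'(\Phi(q)))$, with the first equality coming from $S' = \Phi\circ S\circ\Phi^{-1}$ with respect to the chosen normals, and the third from the hypothesis $\cH\circ\Phi=\cH$. Your remark on orientation is also sound: because the normal $\eta'$ is explicitly prescribed as $d\Phi(\eta)$ rather than determined by an ambient orientation, the conjugation identity for the shape operator holds whether or not $\Phi$ preserves orientation, so no sign ambiguity arises.
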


\begin{remark}\label{lefti}
The notion of a surface of prescribed mean curvature $\cH$ can be generalized to the context of oriented hypersurfaces in $n$-dimensional Lie groups with a left invariant metric. Specifically, if $\psi:M^n\flecha G^{n+1}$ is a hypersurface in such a Lie group $G$, we can define its \emph{left invariant Gauss map} $\eta: M\flecha \S^n = \{v\in \mathfrak{g} : |v|=1\}$ by left translating to the identity element $e$ of $G$ the unit normal vector of $\psi$ at every point. Thus, given $\cH\in C^1(\S^n)$, we may define a \emph{hypersurface of prescribed mean curvature $\cH$} by equation \eqref{presH}. See e.g. \cite{GM1,GM3}.

\end{remark}


\subsection{A particular case: self-translating solitons of mean curvature flows}\label{solit}

We consider next the particular case in which $\cH\in C^1(\S^n)$ is a linear function, i.e. $\cH(x)= a \esiz x,v\esde +b$ for $a,b\in \R$, $a\neq 0$, and $v\in \S^n$. Up to a homothety and a change of Euclidean coordinates in $\R^{n+1}$, we can assume that $a=1$ and that $v=e_{n+1}$, and so 
 \begin{equation}\label{hlin}
 \cH(x)=\esiz x,e_{n+1}\esde +b.
 \end{equation}
Let $f:\Sigma\flecha \R^{n+1}$ be an immersed oriented $\cH$-hypersurface for $\cH$ as in \eqref{hlin}, let $\eta:\Sigma\flecha \S^n$ be its unit normal, and consider the family of translations of $f$ in the $e_{n+1}$ direction, given by $F(p,t)= f(p)+t e_{n+1}$. Then, $F(p,t)$ is a solution to the geometric flow below, which corresponds to the mean curvature flow with a constant forcing term:

\begin{equation}\label{geoflo}
\left(\frac{\parc F}{\parc t}\right)^{\perp}= ( H - b)\eta,
\end{equation}
where $H=H(\cdot,t)$, $\eta=\eta(\cdot,t)$ denote the mean curvature and unit normal of the hypersurface $F(\cdot, t):\Sigma\flecha \R^{n+1}$. In other words, $f:\Sigma\flecha \R^{n+1}$ is a \emph{self-translating soliton} of the geometric flow \eqref{geoflo}. Note that when $b=0$, \eqref{geoflo} is the usual mean curvature flow (in this paper we use the convention for $H$ that the mean curvature of the unit sphere in $\R^{n+1}$ with its inner orientation is equal to $1$). The converse also holds, i.e. any self-translating soliton to \eqref{geoflo} is an $\cH$-hypersurface in $\R^{n+1}$ for $\cH$ as in \eqref{hlin}.

There is a second characterization for $\cH$-hypersurfaces with $\cH$ given by \eqref{hlin}. For that, recall following Gromov \cite{Gr} (see also \cite{BCMR}), that the \emph{weighted mean curvature} $H_{\phi}$ of an oriented hypersurface $\Sigma$ in $\R^{n+1}$ with respect to the density $e^{\phi}\in C^1 (\R^{n+1})$ is given by 
 \begin{equation}\label{wmc}
 H_{\phi}= n H_{\Sigma} - \esiz \eta,D \phi\esde,
 \end{equation}
where $H_{\Sigma}$ is the mean curvature of $\Sigma$ in $\R^{n+1}$, $\eta$ is its unit normal, and $D$ denotes the gradient in $\R^{n+1}$. So, by \eqref{wmc} and the previous discussion we arrive at:

\begin{pro}\label{carwe}
Let $\Sigma$ be an immersed oriented hypersurface in $\R^{n+1}$. The following three conditions are equivalent:
\begin{enumerate}
\item
$\Sigma$ is an $\cH$-hypersurface, for $\cH$ given by \eqref{hlin}.
 \item
$\Sigma$ is a self-translating soliton of the mean curvature flow with a constant forcing term \eqref{geoflo}.
 \item
$\Sigma$ has constant weighted mean curvature $H_{\phi}= nb \in \R$ for the density $e^{\phi}$ in $\R^{n+1}$, where $\phi(x):=n \esiz x,e_{n+1}\esde$.
\end{enumerate}
\end{pro}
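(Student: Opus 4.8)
The plan is to establish the equivalence of the three conditions in Proposition \ref{carwe} by a cyclic chain of implications, though in fact the bulk of the work has already been done in the text preceding the statement; what remains is to organize it. The equivalence (1) $\Leftrightarrow$ (2) is essentially the content of the discussion around \eqref{geoflo}: if $f:\Sigma \flecha \R^{n+1}$ is an $\cH$-hypersurface for $\cH$ as in \eqref{hlin}, then along the normal flow $F(p,t) = f(p) + t e_{n+1}$ one computes $\esiz \parc_t F, \eta \esde = \esiz e_{n+1}, \eta \esde$, while the normal speed required for the forced flow \eqref{geoflo} is $H - b$; since $H = \cH \circ \eta = \esiz \eta, e_{n+1}\esde + b$ by \eqref{presH}, these agree, so $F$ solves \eqref{geoflo} and $f$ is a self-translating soliton. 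Conversely, a self-translating soliton of \eqref{geoflo} satisfies $\esiz e_{n+1}, \eta\esde = H - b$ at $t=0$, which is exactly \eqref{presH} for $\cH$ as in \eqref{hlin}. I would write this out carefully, being explicit that the tangential component of $\parc_t F$ is irrelevant since \eqref{geoflo} only prescribes the normal part.

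Next I would handle (1) $\Leftrightarrow$ (3). Here the key is the formula \eqref{wmc} for the weighted mean curvature together with the explicit choice $\phi(x) = n\esiz x, e_{n+1}\esde$, so that $D\phi = n e_{n+1}$ is constant on $\R^{n+1}$. Substituting into \eqref{wmc} gives $H_\phi = n H_\Sigma - n \esiz \eta, e_{n+1}\esde$. Thus $H_\phi \equiv nb$ is equivalent to $H_\Sigma = \esiz \eta, e_{n+1}\esde + b = \cH \circ \eta$ with $\cH$ as in \eqref{hlin}, which is precisely condition (1). This is a direct algebraic manipulation once the density is identified, so there is no real obstacle; the only point requiring care is the normalization convention for $H_\Sigma$ (the paper uses the convention that the unit sphere with inner orientation has $H = 1$), which must be kept consistent with the factor $n$ appearing in \eqref{presH} and \eqref{wmc}.

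The structure of the proof I would therefore present is: first recall equation \eqref{presH} defining an $\cH$-hypersurface and specialize it to $\cH$ of the form \eqref{hlin}, obtaining $H_\Sigma = \esiz \eta, e_{n+1}\esde + b$; then show this is equivalent to being a self-translating soliton of \eqref{geoflo} by the normal-speed computation above, giving (1) $\Leftrightarrow$ (2); and finally show it is equivalent to $H_\phi \equiv nb$ by plugging the constant vector field $D\phi = n e_{n+1}$ into \eqref{wmc}, giving (1) $\Leftrightarrow$ (3). I do not expect any genuine difficulty in this proof: all three conditions unwind to the same pointwise scalar identity relating $H_\Sigma$, $\esiz \eta, e_{n+1}\esde$ and the constant $b$. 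If anything, the only mild subtlety is bookkeeping the factor $n$ and the mean-curvature sign/normalization convention consistently across \eqref{presH}, \eqref{geoflo} and \eqref{wmc}, so I would state once at the outset which convention is in force and then carry it through uniformly.
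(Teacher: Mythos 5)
Your proposal is correct and follows essentially the same route as the paper, which states Proposition \ref{carwe} as an immediate consequence of the discussion around \eqref{geoflo} together with the formula \eqref{wmc}. You simply make explicit the two short computations (normal speed of the vertical translation flow for (1)$\Leftrightarrow$(2), and substitution of $D\phi = n e_{n+1}$ into \eqref{wmc} for (1)$\Leftrightarrow$(3)) that the paper leaves to the reader.
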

%
%

As explained in the introduction, there are many relevant works on self-translating solitons of the mean curvature flow (which corresponds to the case $b=0$ in \eqref{hlin}). The situation when $b\neq 0$ in \eqref{hlin} is much less studied. Some particular references for this situation are \cite{Es,Lo}.


\subsection{Compact $\cH$-hypersurfaces in $\R^{n+1}$}

\begin{pro}\label{hpos}
Let $\cH\in C^1(\S^n)$, and assume that $\cH(x_0)=0$ for some $x_0\in \S^n$. Then, there are no compact $\cH$-hypersurfaces in $\R^{n+1}$.
\end{pro}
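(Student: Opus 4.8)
The plan is to argue by contradiction using the maximum principle encoded in Lemma \ref{tan}, comparing a hypothetical compact $\cH$-hypersurface $\Sigma$ against a suitable plane. Suppose $\Sigma$ is a compact $\cH$-hypersurface in $\R^{n+1}$. Since $\cH(x_0)=0$, consider the hyperplane $P$ orthogonal to $x_0$; note $P$ is trivially an $\cH$-hypersurface when oriented by the constant normal $x_0$, because its mean curvature is $0=\cH(x_0)$. The key idea is to translate $P$ from far away until it first touches $\Sigma$.

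First I would fix the direction $x_0\in\S^n$ and, for $t\in\R$, let $P_t=\{p\in\R^{n+1}:\langle p,x_0\rangle=t\}$, each oriented by the unit normal $x_0$; every $P_t$ is an $\cH$-hypersurface. Since $\Sigma$ is compact, the function $p\mapsto\langle p,x_0\rangle$ attains a maximum on $\Sigma$ at some point $q$, with maximal value $t_0$. Then $P_{t_0}$ is tangent to $\Sigma$ at $q$, and $\Sigma$ lies entirely in the closed half-space $\{\langle\cdot,x_0\rangle\le t_0\}$, hence locally around $q$ it lies at one side of $P_{t_0}$. At such an interior maximum the tangent hyperplanes agree, so the Gauss map $\eta_\Sigma(q)$ is $\pm x_0$. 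If $\eta_\Sigma(q)=x_0$, condition (1) of Lemma \ref{tan} applies directly to $\Sigma$ and $P_{t_0}$, forcing $\Sigma=P_{t_0}$, which contradicts compactness of $\Sigma$. If instead $\eta_\Sigma(q)=-x_0$, I would repeat the argument at the \emph{minimum} of $\langle\cdot,x_0\rangle$ on $\Sigma$, where the outward-pointing analysis gives $\eta_\Sigma=-x_0$ matching the plane oriented by $-x_0$ (which is also an $\cH$-hypersurface since $\cH$ need not be symmetric, but here the relevant value is still $\cH(-x_0)$); to handle this cleanly it is better to simply choose at the outset, between $x_0$ and $-x_0$, the unit vector realized as $\eta_\Sigma$ at the corresponding extreme point — one of the two extremes must have the outer normal pointing in the $\pm x_0$ direction that makes the comparison plane's prescribed curvature value vanish.

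The cleanest route, and the one I would actually write, is this: let $q$ be a point of $\Sigma$ where $\langle\cdot,x_0\rangle$ is maximal. Then $\eta_\Sigma(q)=\lambda x_0$ for $\lambda=\pm1$. Replacing $x_0$ by $\lambda x_0$ (which still satisfies $\cH(\lambda x_0)=0$ only if $\lambda=1$, so instead) — to avoid this case split, observe that the normal of $\Sigma$ at the point maximizing $\langle\cdot,x_0\rangle$ points in the direction of increasing $\langle\cdot,x_0\rangle$, i.e. $\eta_\Sigma(q)=x_0$, provided $\Sigma$ is oriented suitably; but $\cH$ is not assumed odd, so the orientation matters. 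The genuinely uniform statement is: at the maximum point $q$, the supporting hyperplane $P_{t_0}$ with upward normal $x_0$ satisfies, by the interior comparison in Lemma \ref{tan}(1) (which only requires $\eta_1(q)=\eta_2(q)$ and one-sidedness), that if $\eta_\Sigma(q)=x_0$ then $\Sigma=P_{t_0}$; and if $\eta_\Sigma(q)=-x_0$, then near $q$ the surface $\Sigma$ reoriented by $-\eta_\Sigma$ is an $\cH^-$-hypersurface where $\cH^-(x)=-\cH(-x)$ satisfies $\cH^-(-x_0)=-\cH(x_0)=0$, and comparing with $P_{t_0}$ oriented by $-x_0$ again yields a contradiction. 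Either way, $\Sigma$ would coincide with a hyperplane, which is noncompact — contradiction.

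The main obstacle is the orientation bookkeeping: because $\cH$ is an arbitrary $C^1$ function, not assumed odd, one cannot freely flip orientations, so one must be careful that the comparison plane is genuinely an $\cH$-hypersurface \emph{with the matching orientation} at the contact point. This is resolved by noting that at a maximum of $\langle\cdot,x_0\rangle$ the plane $P_{t_0}$ and $\Sigma$ automatically have a common unit normal (one of $\pm x_0$), and $\cH$ vanishes at $x_0$; the symmetric choice at the minimum (or the reorientation trick above) covers the remaining sign. Once the correct contact point and orientation are matched, Lemma \ref{tan} applies verbatim and the conclusion is immediate. No PDE estimates beyond the already-quoted maximum principle are needed.
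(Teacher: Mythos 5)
Your Case 1 — $\eta_\Sigma(q)=x_0$ at the contact point — is correct and matches the paper. But Case 2, where $\eta_\Sigma(q)=-x_0$, is not actually resolved. The ``reorientation trick'' fails: reorienting $\Sigma$ by $-\eta$ produces an $\cH^-$-hypersurface with $\cH^-(x)=-\cH(-x)$, and at $q$ the reoriented Gauss map equals $-\eta_\Sigma(q)=+x_0$, \emph{not} $-x_0$. The plane $P_{t_0}$ is an $\cH^-$-hypersurface when oriented by $-x_0$ (since $\cH^-(-x_0)=-\cH(x_0)=0$), but that gives Gauss map $-x_0$ at $q$, which does not agree with $+x_0$; so Lemma \ref{tan}(1) cannot be applied. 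Likewise, ``repeating at the minimum'' does not help on its own: nothing prevents $\eta_\Sigma$ from equalling $-x_0$ at \emph{both} the max and the min of $\langle\cdot,x_0\rangle$, and your claim that ``one of the two extremes must have the outer normal pointing in the $\pm x_0$ direction that makes the comparison plane's prescribed curvature value vanish'' is unfounded in the immersed, arbitrarily-oriented setting the proposition is stated in.

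The missing ingredient, which the paper's proof supplies, is \emph{second-order} information at the contact points, not just the first-order tangency. If $\eta=-x_0$ at the bottom contact point $p$, then because $\Sigma$ lies above the supporting plane there, the principal curvatures of $\Sigma$ at $p$ with respect to $\eta=-x_0$ are all $\leq 0$, so $\cH(-x_0)=H_\Sigma(p)\leq 0$; and if $\cH(-x_0)=0$ one gets a tangency contradiction via Lemma \ref{tan} (with the plane oriented by $-x_0$), so in fact $\cH(-x_0)<0$. At the top contact point $q$ (again with $\eta(q)=-x_0$, the case $+x_0$ being handled by tangency), we then have $H_\Sigma(q)=\cH(-x_0)<0$, so some principal curvature of $\Sigma$ at $q$ is negative with respect to $\eta(q)=-x_0$, which is incompatible with $\Sigma$ lying entirely on one side of its supporting plane at $q$. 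Without this sign analysis of $\cH(-x_0)$, the argument does not close.
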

\begin{proof}
Take $\cH\in C^1(\S^n)$ with $\cH(x_0)=0$ for some $x_0\in \S^n$, and let $\Sigma$ denote a compact immersed $\cH$-hypersurface in $\R^{n+1}$. Let $\Pi$ denote an oriented hyperplane in $\R^{n+1}$ with unit normal $x_0$, and such that $\Sigma$ is contained in the half-space $\Pi^+:=\cup \{\Pi_{\landa} : \landa>0\}$, where $\Pi_{\landa}:=\Pi + \landa x_0$. Let $\landa_0$ denote the smallest $\landa>0$ for which $\Sigma\cap \Pi_{\landa} \neq \emptyset$, and let $p\in \Sigma\cap \Pi_{\landa_0}$ be any of such \emph{first contact points}. Note that $\eta(p)=\pm x_0$, where $\eta:\Sigma\flecha \S^n$ is the unit normal to $\Sigma$.

If $\eta(p)=x_0$, then both $\Sigma,\Pi_{\landa_0}$ are $\cH$-hypersurfaces and we obtain a contradiction with the maximum principle (Lemma \ref{tan}). Hence, $\eta(p)=-x_0$. Consequently, the principal curvatures $\kappa_i(p)$ of $\Sigma$ at $p$ are $\leq 0$, and hence $\cH(-x_0)\leq 0$. As a matter of fact, $\cH(-x_0)<0$, for if $\cH(-x_0)=0$, then $\Pi_{\landa_0}$ with its opposite orientation is also an $\cH$-hypersurface, and we contradict again Lemma \ref{tan}.

Let now $\landa_1$ be the largest $\landa>0$ for which $\Sigma\cap \Pi_{\landa}\neq \emptyset$, and take $q\in \Sigma\cap \Pi_{\landa_1}$. By previous arguments, $\eta(q)=-x_0$. Since $H_{\Sigma}(q)<0$ (because $\cH(-x_0)<0$), one of the principal curvatures $\kappa_i(q)$ of $\Sigma$ at $q$ is negative. This is a contradiction with the fact that $\Sigma\cap \Pi_{\landa}$ is empty for all $\landa>\landa_1$. This contradiction proves Proposition \ref{hpos}
\end{proof}

By Proposition \ref{hpos}, we see that given $\cH\in C^1(\S^n)$, a necessary condition for the existence of a compact $\cH$-hypersurface $\Sigma$ is that $\cH>0$ or $\cH<0$ on $\S^n$ (up to a change of orientation, we can assume $\cH>0$ in these conditions). However, this is \emph{not} a sufficient condition. To see this, let $\psi:\Sigma\flecha \R^{n+1}$ denote a compact immersed hypersurface of prescribed mean curvature $\cH\in C^1(\S^n)$, take $v\in \S^n$ and denote $h=\esiz \psi,v\esde$. It is then well known that $\Delta h = n H_{\Sigma} \esiz \eta,v\esde$, where $\eta$ denotes the unit normal of $\Sigma$ and $\Delta$ is the Laplacian on $\Sigma$. By the divergence theorem, and since $H_{\Sigma}=\cH\circ \eta$, we have
 \begin{equation}\label{intcon}
\int_\Sigma \langle\eta,v\rangle \, \cH(\eta) =0.
\end{equation}
From \eqref{intcon}, and noting that $\int_{\Sigma} \esiz \eta,v\esde =0$ for every $v\in \S^n$ if $\Sigma$ is compact, we have:

\begin{cor}\label{cornex}
Let $\cH\in C^1(\S^n)$ be of the form $\cH(x)=h_0(x)+b$, where $b\in \R$ and $h_0\in C^1(\S^n)$ is not identically zero and satisfies $h_0(x)\esiz x,v\esde \geq 0$ for every $x\in \S^n$ and for some $v\in \S^n$.

Then, there are no compact $\cH$-hypersurfaces in $\R^{n+1}$.
\end{cor}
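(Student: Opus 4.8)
The plan is to combine the integral identity \eqref{intcon} with the vanishing of $\int_\Sigma \langle \eta,v\rangle$: this will reduce everything to the statement that a pointwise non\-negative continuous function on $\Sigma$ has zero integral, hence vanishes identically, and then the surjectivity of the Gauss map of a compact hypersurface will force $h_0\equiv 0$, contradicting the hypothesis.

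Concretely, I would argue by contradiction. Suppose $\psi:\Sigma\to\R^{n+1}$ is a compact immersed $\cH$-hypersurface with $\cH=h_0+b$ as in the statement, and let $v\in\S^n$ be the direction for which $h_0(x)\langle x,v\rangle\geq 0$ on $\S^n$. Applying \eqref{intcon} with this $v$ and writing $\cH(\eta)=h_0(\eta)+b$ gives
$$\int_\Sigma h_0(\eta)\,\langle \eta,v\rangle \;+\; b\int_\Sigma \langle \eta,v\rangle \;=\;0 .$$
Since $\int_\Sigma \langle \eta,v\rangle=0$ for compact $\Sigma$ (as recalled in the text just before the statement), this reduces to $\int_\Sigma h_0(\eta)\,\langle \eta,v\rangle=0$.

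Next I would observe that the integrand is $h_0(\eta(p))\,\langle \eta(p),v\rangle=\bigl(h_0(x)\langle x,v\rangle\bigr)\big|_{x=\eta(p)}\geq 0$ for every $p\in\Sigma$ by hypothesis, and it is continuous; so its integral being zero forces $h_0(\eta(p))\,\langle \eta(p),v\rangle=0$ for all $p\in\Sigma$. At this point I would invoke the standard fact that the Gauss map of a compact immersed hypersurface is onto $\S^n$: for each $w\in\S^n$, a point of $\Sigma$ at which $\langle\psi,w\rangle$ attains its maximum has unit normal $w$ (this uses only the maximum of a function on a compact set, not embeddedness). Hence $h_0(x)\langle x,v\rangle=0$ for every $x\in\S^n$, so $h_0\equiv 0$ on the open dense set $\{x\in\S^n:\langle x,v\rangle\neq 0\}$, and by continuity $h_0\equiv 0$ on $\S^n$, contradicting the assumption that $h_0$ is not identically zero. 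This proves Corollary \ref{cornex}.

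The argument is short once \eqref{intcon} is available, and I do not anticipate any genuine obstacle; the only step worth a word of care is the surjectivity of the Gauss map in the immersed (not necessarily embedded) case, which follows from the support\-function/maximum argument and is classical.
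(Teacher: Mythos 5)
Your overall strategy---subtract $b\int_{\Sigma}\langle\eta,v\rangle=0$ from \eqref{intcon}, observe that the remaining integrand $h_0(\eta)\langle\eta,v\rangle$ is pointwise nonnegative and integrates to zero, hence vanishes, and then push this back to $\S^n$ via surjectivity of the Gauss map---is exactly what the paper has in mind, and the first two steps are fine. The gap is in your justification of Gauss-map surjectivity. At a point of $\Sigma$ where $\langle\psi,w\rangle$ attains its maximum, the second-derivative test only tells you that $\eta=\pm w$ and $\langle\eta,w\rangle\,\sigma\leq 0$; for a merely immersed hypersurface with a fixed orientation, nothing forces $\eta=w$ rather than $-w$. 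Indeed the Gauss map of a compact \emph{immersed} hypersurface need not be surjective: already for $n=1$ one can take a closed regular plane curve with turning function $\theta(s)=A\cos(2\pi s/L)$, where $A$ is a zero of the Bessel function $J_0$, so that $\int\gamma'=0$ (the curve closes up) while $\theta$ never leaves $[-A,A]\subsetneq[-\pi,\pi]$, and the tangent (hence normal) indicatrix misses an arc of $\S^1$. So ``surjectivity by looking at the maximum of a height function'' is not a valid step for immersed hypersurfaces.

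The step is recoverable, but you must use the $\cH$-structure. First invoke Proposition \ref{hpos}: if a compact $\cH$-hypersurface exists, then $\cH$ never vanishes on $\S^n$, so by connectedness of $\S^n$ either $\cH>0$ or $\cH<0$; replacing $\cH$ by $-\cH(-\,\cdot)$ and reversing the orientation (which preserves both the hypothesis $h_0(x)\langle x,v\rangle\geq 0$ and the integral identity), one may assume $\cH>0$, i.e.\ $H_\Sigma>0$ everywhere. Now the sign is pinned down: at a \emph{minimum} of $\langle\psi,w\rangle$ one has $\langle\eta,w\rangle\,\sigma\geq 0$ with $\eta=\pm w$, and the choice $\eta=-w$ would give $\sigma\leq 0$, hence $H_\Sigma\leq 0$, contradicting $\cH>0$; so $\eta=w$ there. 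This shows $\eta:\Sigma\to\S^n$ is onto, and from that point on your argument (that $h_0(x)\langle x,v\rangle\equiv 0$ on $\S^n$, hence $h_0\equiv 0$ by density of $\{\langle x,v\rangle\neq 0\}$ and continuity) is correct.
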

For the choice $\cH(x)=\esiz x,v\esde +b$, Corollary \ref{cornex}  
is due to López, see \cite{Lo}.

Lemmas \ref{tan} and \ref{sime} allow to use the Alexandrov reflection principle in any direction in $\R^{n+1}$ with respect to which the function $\cH$ is symmetric. We singularize the following consequence.

\begin{pro}\label{ale}
Let $\cH\in C^1(\S^n)$ be invariant with respect to $n$ linearly independent geodesic reflections $T_1,\dots, T_n$ of $\S^n$. Then, any compact embedded $\cH$-hypersurface in $\R^{n+1}$ is diffeomorphic to $\S^n$. Moreover, $\Sigma$ is a symmetric bi-graph with respect to hyperplanes $\hat{\Pi}_1,\dots, \hat{\Pi}_n$ parallel to the hyperplanes $\Pi_i$ of $\R^{n+1}$ fixed by $T_i$.
\end{pro}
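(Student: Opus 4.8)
The plan is to run the Alexandrov moving--plane method once in each of the $n$ directions in which $\cH$ is symmetric, and then read off the topology from the resulting bi-graph structure. For the setup, each geodesic reflection $T_i$ of $\S^n$ is the restriction of a linear reflection $\Phi_i$ of $\R^{n+1}$ across a hyperplane $\Pi_i$ through the origin; write $v_i\in\S^n$ for a unit normal of $\Pi_i$, so that $v_1,\dots,v_n$ are linearly independent. For $t\in\R$ put $\Pi_i^t:=\Pi_i+t\,v_i$ and let $R_i^t$ be the Euclidean reflection of $\R^{n+1}$ across $\Pi_i^t$; then $R_i^t$ is the composition of $\Phi_i$ with the translation by $2t\,v_i$. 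Since $\cH\circ\Phi_i=\cH$, Lemma \ref{sime} shows that $\Phi_i$ carries $\cH$-hypersurfaces to $\cH$-hypersurfaces, and since translations do the same, so does each $R_i^t$. I will also use that, by Proposition \ref{hpos}, the existence of a compact $\cH$-hypersurface forces $\cH$ to have no zeros, so (reversing orientation if needed) we may take $\cH>0$ on $\S^n$; this is what keeps the moving--plane procedure non-degenerate, since it rules out flat pieces of $\Sigma$.

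Fix $i$ and set $h_i=\langle\cdot,v_i\rangle$ on $\Sigma$. As $\Sigma$ is compact, $\Pi_i^t$ misses $\Sigma$ for $t\gg 0$, and I would let $t$ decrease. For $t$ slightly below $\max_\Sigma h_i$ the cap $\Sigma_t^{+}:=\Sigma\cap\{h_i>t\}$ is a graph over a planar domain, and its mirror image $R_i^t(\Sigma_t^{+})$ sits inside the region enclosed by $\Sigma$, on the same side of $\Pi_i^t$ as $\Sigma_t^{-}:=\Sigma\cap\{h_i<t\}$. Let $t_i$ be the first (largest) value of $t$ at which this configuration degenerates; then at $t=t_i$ one of two standard alternatives occurs: either (a) $R_i^{t_i}(\Sigma_{t_i}^{+})$ meets $\Sigma_{t_i}^{-}$ at an interior point with coinciding unit normals, or (b) $R_i^{t_i}(\Sigma_{t_i}^{+})$ becomes tangent to $\Sigma$ at some $p\in\Sigma\cap\Pi_i^{t_i}$ with $v_i\in T_p\Sigma$, where the two surfaces share both the unit normal and the interior conormal of their boundary arcs in $\Pi_i^{t_i}$. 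In case (a) I would invoke part (1) of Lemma \ref{tan}, in case (b) part (2); since $R_i^{t_i}(\Sigma_{t_i}^{+})$ lies at one side of $\Sigma$ near the contact point, either way $R_i^{t_i}(\Sigma_{t_i}^{+})=\Sigma_{t_i}^{-}$. Hence $\Sigma$ is symmetric with respect to $\hat{\Pi}_i:=\Pi_i^{t_i}$, which is parallel to $\Pi_i$; and, unwinding the construction, $\Sigma\cap\{h_i\ge t_i\}$ is a graph over a bounded domain $\Omega_i\subset\hat{\Pi}_i$, so that $\Sigma=G_i^{+}\cup G_i^{-}$, where $G_i^{\pm}$ are the graphs of $\pm u_i$ over $\overline{\Omega_i}$ for some $u_i\ge 0$ vanishing exactly on $\partial\Omega_i$, with $R_i^{t_i}$ interchanging $G_i^{+}$ and $G_i^{-}$; in particular $\Sigma$ is connected. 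Repeating this for $i=1,\dots,n$ produces the asserted bi-graph structure over $\hat{\Pi}_1,\dots,\hat{\Pi}_n$.

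It remains to identify the topology. Fixing $i=1$, $\Sigma$ is the double of $\overline{\Omega_1}$ along $\partial\Omega_1$, so it suffices to check that $\overline{\Omega_1}$ is a topological $n$-ball. Let $W$ be the compact region bounded by $\Sigma$; the bi-graph property in direction $v_i$ means that $W$ meets every line parallel to $v_i$ in a (possibly empty or degenerate) segment and is symmetric with respect to $\hat{\Pi}_i$, for each $i$. Since $v_1,\dots,v_n$ are linearly independent and the $\hat{\Pi}_i$ have a common point, a translation and a linear change of coordinates reduce us to the case $v_i=e_i$, $\hat{\Pi}_i=\{x_i=0\}$. I would then slice $W$ successively by $\{x_1=0\}$, $\{x_1=x_2=0\}$, and so on: at each step the slice has nonempty interior, is connected (it is the orthogonal projection of the previous slice, which is a bi-graph over it in the next coordinate direction), and is line-convex in the remaining coordinate directions; hence, by downward induction starting from the one-dimensional slice $W\cap\{x_1=\dots=x_n=0\}$, $W$ is homeomorphic to a closed $(n+1)$-ball, and therefore $\Sigma$ is diffeomorphic to $\S^n$.

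The technical heart, and the step I expect to be the main obstacle, is the moving--plane procedure itself: checking that the reflected caps stay embedded graphs lying to one side of $\Sigma$ up to the first contact, and correctly classifying the stopping configuration so that the appropriate version of the tangency principle (Lemma \ref{tan}) applies --- in particular the boundary case (b), with its matching of conormals along $\Pi_i^{t_i}$. The hypothesis $\cH>0$ drawn from Proposition \ref{hpos} is precisely what prevents degenerate behaviour of $\Sigma$ near the extreme planes. The closing topological deduction is softer, but does genuinely require line-convexity in $n$ independent directions --- a single reflection yields only a bi-graph, which by itself does not determine the topology (a torus of revolution is a bi-graph in one direction).
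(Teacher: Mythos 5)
Your moving-plane argument is the same as the paper's, and it is sketched correctly (including the two stopping alternatives matched to the two cases of Lemma \ref{tan}). Where you genuinely diverge is in the topological conclusion. The paper observes that $\Gamma:=\bigcap_i\hat\Pi_i$ is a line, takes it to be the $x_{n+1}$-axis, and slices the enclosed region $\Omega$ by the \emph{single} family of hyperplanes $P_t=\{x_{n+1}=t\}$, i.e.\ by the direction orthogonal to all the $v_i$; the bi-graph property in the horizontal directions $v_1,\dots,v_n$ forces $P_t\cap\Sigma$ to be transverse for $t\in(a,b)$ and to degenerate to single points at $t=a,b$, whence each $\Omega_t$ is a ball and $\Sigma\cong\S^n$. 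You instead slice \emph{successively} by the symmetry hyperplanes $\hat\Pi_1,\hat\Pi_1\cap\hat\Pi_2,\dots$, using that projection along $v_i$ onto $\hat\Pi_i$ lands exactly on the slice (because each $v_i$-segment of $W$ is centered on $\hat\Pi_i$), and then build $W$ up from the interval $W\cap\Gamma$ by an induction on codimension. Both routes work; yours trades the paper's one transversality observation for an $n$-step induction. Two technical points you should tighten: the normalization ``$v_i=e_i$ and $\hat\Pi_i=\{x_i=0\}$'' is not achievable by a linear map unless the $v_i$ are already orthonormal (a linear map taking $v_i\mapsto e_i$ sends $\hat\Pi_i=v_i^\perp$ to a hyperplane with normal $(A^{-1})^Tv_i$, not $e_i$); this is harmless because your argument only uses slicing by $\hat\Pi_i$ and line-convexity in direction $v_i$ with midpoints on $\hat\Pi_i$, which is linearly invariant, so simply drop the orthonormality claim. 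More substantively, the inductive step ``$W_k$ a ball $\Rightarrow$ $W_{k-1}$ a ball'' needs you to check that the half-thickness function $a_k$ of $W_{k-1}$ over $W_k$ is continuous, strictly positive on the interior of $W_k$ and zero precisely on $\partial W_k$; positivity in the interior follows from the fact that $a_k(p)=0$ forces $p\in\Sigma$ with $v_k\in T_p\Sigma$, hence $p\in\Sigma\cap\hat\Pi_k$, and a transversality computation (using $v_1,\dots,v_{k}\in T_p\Sigma$ and hence $\eta(p)\notin\operatorname{span}(v_1,\dots,v_k)$) shows $p\in\partial W_k$. With those two clarifications your proof is complete and is a legitimate alternative to the one in the paper.
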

\begin{proof}
Let $v_1,\dots, v_n$ be unit normal vectors to $\Pi_1,\dots, \Pi_n$. By applying the Alexandrov reflection technique with respect to these directions, we conclude using Lemma \ref{tan} and Lemma \ref{sime} that, for each $i$, $\Sigma$ is a symmetric bi-graph with respect to some hyperplane $\hat{\Pi}_i$ of $\R^{n+1}$ parallel to $\Pi_i$. Define now $\Gamma:=\cap_{i=1}^n \hat{\Pi}_i$, which is a straight line in $\R^{n+1}$. For definiteness, we will assume that $\Gamma$ is the $x_{n+1}$-axis, and so each $v_i$ is horizontal, i.e. $\esiz v_i,e_{n+1}\esde =0$.

Let $P_t =\{x_{n+1}=t\}\subset \R^{n+1}$. By compactness of $\Sigma$, there is a smallest interval $[a,b]\subset \R$ such that $P_t\cap \Sigma=\emptyset$ if $t\not\in [a,b]$. Also, as $\Sigma$ is a (connected) symmetric bi-graph with respect to the horizontal directions $v_1,\dots, v_n$, it is clear that $P_a\cap \Sigma=\{({\bf 0},a)\}$ and $P_b\cap \Sigma= \{({\bf 0},b)\}$, and that $P_t\cap \Sigma$ is non-empty and transverse for all $t\in (a,b)$. 

Let $\Omega\subset \R^{n+1}$ be the domain bounded by $\Sigma$, and define $\Omega_t:=\Omega\cap P_t$ for $t\in (a,b)$. Note that for $t<b$ sufficiently close to $b$, $\Omega_t$ is diffeomorphic to an $n$-dimensional ball $\mathbb{B}_n$. As the intersection $P_t\cap \Sigma$ is transverse for all $t\in (a,b)$, we deduce that $\Omega_t$ is diffeomorphic to $\mathbb{B}_n$ for all $t\in (a,b)$. This implies directly that $\Omega$ is diffeomorphic to $\mathbb{B}_{n+1}$ and so, that $\Sigma$ is diffeomorphic to $\S^n$.

\end{proof}

Proposition \ref{ale} is an extension to $\cH$-hypersurfaces of Alexandrov's theorem according to which compact embedded CMC hypersurfaces in $\R^{n+1}$ are round spheres. The other fundamental result for compact CMC surfaces in $\R^3$ is Hopf's theorem (i.e. CMC surfaces diffeomorphic to $\S^2$ immersed in $\R^3$ are round spheres). An extension of Hopf's theorem to $\cH$-surfaces in $\R^3$ follows from work by the second and third authors \cite{GM1,GM2}:

\begin{teo}[\cite{GM1}]\label{hopf}
Let $\cH\in C^1(\S^2)$, $\cH>0$, and assume that there exists a strictly convex $\cH$-sphere $S$ in $\R^3$. Then any compact immersed $\cH$-surface of genus zero is a translation of $S$.
\end{teo}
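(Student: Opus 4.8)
The plan is to use the continuity method / holomorphic-quadratic-differential machinery developed in \cite{GM1, GM2}. The key structural fact is that, although an $\cH$-surface in $\R^3$ for non-constant $\cH$ carries no holomorphic Hopf differential in the classical sense, the second and third authors constructed in \cite{GM1} a complex-analytic object playing the same role: for any immersed $\cH$-surface $\Sigma$ of genus zero one produces a quadratic differential $Q\,dz^2$ (built from the shape operator together with a correction term depending on $\cH$ and its gradient on $\S^2$), and one shows that $Q\,dz^2$ is holomorphic precisely when the prescribed-mean-curvature equation holds. First I would invoke this construction for $\Sigma$; since $\Sigma$ has genus zero it is conformally $\S^2$, and a holomorphic quadratic differential on $\S^2$ vanishes identically, so $Q\equiv 0$.

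Next I would translate the vanishing $Q\equiv 0$ into a geometric statement. The vanishing of this generalized Hopf differential forces the surface to be totally umbilical in the appropriate $\cH$-weighted sense; more precisely, it pins down the traceless part of the shape operator in terms of $\cH\circ\eta$ and $\nabla\cH$, which turns the structure equations into a first-order overdetermined system for the immersion in terms of its Gauss map $\eta:\S^2\to\S^2$. One then argues, exactly as in \cite{GM1}, that $\eta$ must be a diffeomorphism (using $\cH>0$, so that the surface is convex where $\eta$ is a local diffeomorphism, plus a degree/connectedness argument), and that the surface is recovered from $\eta$ by an explicit integration formula determined solely by $\cH$. This shows $\Sigma$ is a \emph{strictly convex} $\cH$-sphere.

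Finally, having reduced to comparing two strictly convex $\cH$-spheres $\Sigma$ and $S$, I would finish with a uniqueness argument. By the previous step, both are obtained by integrating the same $\eta$-dependent data over $\S^2$, so they differ at most by a translation; alternatively, one can run Alexandrov-type reasoning or the tangency principle of Lemma \ref{tan}: translate $S$ so that it becomes internally tangent to $\Sigma$ and has the same oriented normal at the contact point (possible because both are strictly convex spheres, so the Gauss maps are bijective and one can match any prescribed normal direction), then slide until first contact; Lemma \ref{tan} yields $\Sigma = S + p_0$ for some $p_0 \in \R^3$, which is the claimed conclusion.

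The main obstacle is the first step: making rigorous the claim that the generalized Hopf differential $Q\,dz^2$ of \cite{GM1} is holomorphic and therefore vanishes, and then extracting from $Q\equiv 0$ the full geometric rigidity (that $\eta$ is a diffeomorphism and the immersion is determined by $\cH$ up to translation). This is where the non-variational, non-integrable nature of the problem bites, and it is precisely the content imported from \cite{GM1, GM2}; the convexity hypothesis $\cH>0$ together with the assumed existence of the model strictly convex sphere $S$ are exactly what is needed to close the degree-theoretic and maximum-principle arguments there. The remaining steps are then comparatively routine applications of the tangency principle already established in this paper.
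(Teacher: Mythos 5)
The paper does not actually prove Theorem~\ref{hopf}; it is quoted verbatim from \cite{GM1,GM2}, so there is no in-paper argument to match. That said, your reconstruction of the \cite{GM1} mechanism is not correct, and the premise on which your whole plan rests is contradicted by the present paper itself. You claim that \cite{GM1} builds a quadratic differential $Q\,dz^2$ out of the shape operator plus $\cH$-dependent corrections and shows it to be \emph{holomorphic} whenever the prescribed mean curvature equation holds; but the introduction here states explicitly that ``in contrast with CMC surfaces in $\R^3$, there is no holomorphic object associated to $\cH$-surfaces in $\R^3$ for non-constant $\cH\in C^1(\S^2)$.'' For non-constant $\cH$ there is no holomorphic Hopf differential, and \cite{GM1,GM2} never assert one; the relevant ``differential'' there is only $C^1$, and holomorphicity plays no role.

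What \cite{GM1,GM2} actually do is a comparison-and-index argument in which the candidate sphere $S$ enters from the very beginning, not merely as a final uniqueness benchmark. Given an immersed $\cH$-sphere $\Sigma$ and $p\in\Sigma$, one translates $S$ so it is tangent to $\Sigma$ at $p$ with matching oriented normal, and writes both surfaces near $p$ as graphs over the common tangent plane. Since both height functions solve the same quasilinear elliptic equation \eqref{eqH}, their difference is a solution of a linear homogeneous elliptic equation with a critical point at the origin, so by a Bers-type local structure theorem it vanishes to some finite order $k\geq 2$ with a nodal set of $2k$ rays unless $\Sigma$ locally coincides with the translated $S$. From this second-order contact one extracts a line field on $\Sigma$ all of whose isolated singularities have strictly negative index; Poincar\'e--Hopf on $\S^2$ (Euler characteristic $2>0$) then forces the line field to be undefined everywhere, i.e.\ $\Sigma$ to agree with a translate of $S$. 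Notice also that this route does not pass through ``$\eta$ is a diffeomorphism and the surface is recovered by an integration formula in $\cH$'': the uniqueness theorem makes no a priori convexity assumption on $\Sigma$, and deriving convexity from $Q\equiv 0$ (as you propose) is precisely the step that cannot be carried out, because there is no holomorphic $Q$ in the first place. Your final tangency-principle step is fine as a sanity check once both surfaces are known to be translates of $S$, but the load-bearing first step of your proposal, as stated, would not go through.
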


In \cite{GG} B. Guan and P. Guan proved the following result:

\begin{teo}[\cite{GG}]\label{tgg}
Let $\cH\in C^2(\S^n)$ be positive and invariant under a group of isometries of $\S^n$ without fixed points. Then there exists a closed strictly convex $\cH$-hypersurface in $\R^{n+1}$.
\end{teo}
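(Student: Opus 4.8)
The plan is to prove Theorem \ref{tgg} (Guan--Guan) by a degree-theoretic / continuity argument, reducing the construction of a closed strictly convex $\cH$-hypersurface to the solvability of a fully nonlinear elliptic PDE for the support function on $\S^n$. Concretely, a closed strictly convex hypersurface $\Sigma\subset\R^{n+1}$ is determined by its support function $u\in C^2(\S^n)$, $u>0$, via the inverse Gauss map, and its principal radii of curvature are the eigenvalues of the symmetric tensor $W_u := \nabla^2 u + u\, g_{\S^n}$, which must be positive definite. In these terms the prescribed-mean-curvature equation \eqref{presH} becomes the quasilinear-in-the-second-derivatives elliptic equation
\begin{equation}\label{ggpde}
\frac{\sigma_{n-1}(W_u)}{\sigma_n(W_u)} = \frac{1}{\cH(x)}, \qquad x\in\S^n,
\end{equation}
where $\sigma_k$ denotes the $k$-th elementary symmetric polynomial of the eigenvalues (so $\sigma_{n-1}/\sigma_n$ is the sum of principal radii, i.e.\ $n$ times the inverse mean curvature). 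One seeks a solution $u$ with $W_u>0$; the hypersurface recovered from such a $u$ is automatically strictly convex and, by construction, an $\cH$-hypersurface.

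The core of the argument is the method of continuity. First I would connect $\cH$ to a constant $\cH_0>0$ through the family $\cH_t = (1-t)\cH_0 + t\cH$, $t\in[0,1]$, each $\cH_t$ still positive and still invariant under the given fixed-point-free group of isometries of $\S^n$ (since the symmetry is preserved by convex combinations); note $t=0$ is solved by a round sphere of the appropriate radius. Let $I\subset[0,1]$ be the set of $t$ for which \eqref{ggpde} with $\cH_t$ admits a strictly convex solution. Openness of $I$ follows from the implicit function theorem once one checks that the linearized operator at a strictly convex solution is invertible; here the invariance under the group is essential: the linearization has a kernel coming from translations of the hypersurface, and these translations correspond exactly to the restrictions to $\S^n$ of linear functions on $\R^{n+1}$, i.e.\ the first eigenspace of $\Delta_{\S^n}$ — but a fixed-point-free isometry group admits no nonzero invariant vector in $\R^{n+1}$ (any such vector would be fixed), so restricting to the subspace of invariant functions kills the kernel and restores invertibility. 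Closedness of $I$ is the analytic heart: one needs uniform a priori estimates, in $C^{2,\alpha}$ and with a uniform positive lower bound on the eigenvalues of $W_u$, for all strictly convex solutions along the family, so that a limit of solutions is again a strictly convex solution.

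The hard part will be these a priori estimates, and in particular the $C^0$, $C^1$, and $C^2$ bounds together with the uniform strict convexity (uniform lower bound on principal radii, equivalently uniform upper bound on principal curvatures). The $C^0$ and $C^1$ estimates (bounds on $u$ and $|\nabla u|$, i.e.\ on the size and "eccentricity" of $\Sigma$) can be obtained from the positivity bounds $0<\min_{\S^n}\cH\le \cH_t\le \max(\cH_0,\max\cH)$ by comparing with spheres via the maximum principle / the tangency principle of Lemma \ref{tan}: $\Sigma$ is trapped between concentric spheres of radii $1/\max\cH_t$ and $1/\min\cH_t$ after a suitable choice of origin, and since $\cH>0$ is bounded away from $0$ there are no compact examples obstructing this by Proposition \ref{hpos}. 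The $C^2$ / global curvature estimate is the genuinely delicate step: one applies the maximum principle to a carefully chosen auxiliary function involving the largest principal radius (or largest eigenvalue of $W_u$), differentiates \eqref{ggpde} twice, and uses the concavity of $\sigma_{n-1}/\sigma_n$ as a function on the positive cone together with the $C^1$ bounds and the regularity $\cH\in C^2$ to close the estimate; this is exactly the technical contribution of Guan--Guan, and I would cite \cite{GG} for it rather than reproduce it. Once all solutions along the family satisfy uniform $C^{2,\alpha}$ and uniform ellipticity bounds, Evans--Krylov and Schauder theory upgrade the estimates, $I$ is closed, hence $I=[0,1]$, and $t=1$ gives the desired closed strictly convex $\cH$-hypersurface.
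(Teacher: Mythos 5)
The paper does not prove Theorem \ref{tgg}; it imports it verbatim from \cite{GG} with no argument, so there is no in-paper proof to compare against. Your sketch is a reasonable high-level reconstruction of the Guan--Guan continuity method: encode a closed strictly convex hypersurface by its support function $u>0$ on $\S^n$, so that the prescribed mean curvature condition becomes a fully nonlinear elliptic PDE in the positive-definite tensor $W_u = \nabla_{\S}^2 u + u\,g_{\S^n}$; deform from a constant $\cH_0$ along a convex combination that preserves the group invariance; use the fixed-point-free hypothesis to eliminate the translation ambiguity (both as the kernel of the linearization, spanned by restrictions of linear functions, and as the obstruction to uniform $C^0$ bounds along the path); and establish the a priori $C^0$, $C^1$, $C^2$ estimates, deferring the genuinely hard $C^2$ estimate to \cite{GG}. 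One correction to your formulation of the support-function equation: the eigenvalues of $W_u$ are the principal \emph{radii} $R_i$, and hence $\sigma_{n-1}(W_u)/\sigma_n(W_u) = \sum_i 1/R_i = \sum_i \kappa_i = nH$, so the equation should read $\sigma_{n-1}(W_u)/\sigma_n(W_u) = n\,\cH(x)$ (equivalently $\sigma_n(W_u)/\sigma_{n-1}(W_u) = 1/(n\cH(x))$), not $1/\cH(x)$; and the quantity on the left is $n$ times the mean curvature, not the sum of principal radii as you state. This is a formulation slip rather than a methodological gap --- $\sigma_n/\sigma_{n-1}$ is concave and elliptic on the positive cone, so the Evans--Krylov step goes through --- but the factor of $n$ and the curvature/radius roles need to be kept straight when checking the $C^0$ comparison with spheres.
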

In particular, if $\cH\in C^2(\S^n)$ satisfies $\cH(x)=\cH(-x)>0$ for all $x\in \S^n$, then there exists a closed, strictly convex $\cH$-hypersurface $S_{\cH}$ in $\R^{n+1}$. When $n=2$, the Guan-Guan sphere $S_{\cH}$ is actually the unique immersed $\cH$-sphere in $\R^3$, by Theorem \ref{hopf}. 



By combining Proposition \ref{ale}, Theorem \ref{hopf} and Theorem \ref{tgg}, we then have:

\begin{cor}\label{3planes}
Let $\Sigma$ be a compact embedded $\cH$-surface in $\R^3$, where $\cH\in C^2(\S^2)$ is positive and invariant under a group of isometries of $\S^2$ without fixed points that contains two geodesic reflections of $\S^2$. Then $\Sigma$ is a translation of the Guan-Guan sphere $S_{\cH}$.
\end{cor}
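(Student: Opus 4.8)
The plan is to simply assemble the three results just established: the Alexandrov-type Proposition~\ref{ale}, the Guan--Guan existence Theorem~\ref{tgg}, and the Hopf-type Theorem~\ref{hopf}.

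First I would produce the model sphere. Since $\cH\in C^2(\S^2)$ is positive and invariant under a group of isometries of $\S^2$ without fixed points, Theorem~\ref{tgg} (applied with $n=2$) yields a closed, strictly convex $\cH$-sphere $S_{\cH}$ in $\R^3$. Next I would pin down the topology of $\Sigma$. By hypothesis the symmetry group of $\cH$ contains two geodesic reflections $T_1,T_2$ of $\S^2$; being distinct, their mirror great circles determine distinct linear planes of $\R^3$, so the corresponding unit normals $v_1,v_2$ are linearly independent. Thus $\cH$ is invariant under two linearly independent geodesic reflections, and Proposition~\ref{ale} (with $n=2$) applies to the compact embedded $\cH$-surface $\Sigma$, showing that $\Sigma$ is diffeomorphic to $\S^2$; in particular it has genus zero.

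Finally I would invoke the Hopf-type theorem. At this stage we have $\cH\in C^1(\S^2)$ with $\cH>0$, a strictly convex $\cH$-sphere $S_{\cH}$ from the first step, and a compact immersed (indeed embedded) $\cH$-surface $\Sigma$ of genus zero from the second step, so Theorem~\ref{hopf} forces $\Sigma$ to be a translation of $S_{\cH}$, which is exactly the claim.

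The argument is essentially bookkeeping of hypotheses, so there is no real obstacle; the only point deserving a word of care is the step from ``the symmetry group contains two geodesic reflections'' to ``$\cH$ is invariant under two \emph{linearly independent} geodesic reflections'', which is precisely what Proposition~\ref{ale} requires for $n=2$. One should also check the regularity at each stage: $C^2$ is needed only for the Guan--Guan step, while Proposition~\ref{ale} and Theorem~\ref{hopf} need just $C^1$, so the standing assumption $\cH\in C^2(\S^2)$ is enough throughout.
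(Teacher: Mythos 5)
Your proof is correct and matches the paper's intended argument exactly: the paper itself introduces the corollary with ``By combining Proposition~\ref{ale}, Theorem~\ref{hopf} and Theorem~\ref{tgg}, we then have:'' and provides no further detail. The one point you spell out beyond the paper's terse presentation --- that two distinct geodesic reflections of $\S^2$ automatically have linearly independent mirror planes, so Proposition~\ref{ale} with $n=2$ applies --- is a correct and worthwhile clarification.
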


Examples of groups of isometries of $\S^2$ in the conditions of Corollary \ref{3planes} are those generated by reflections with respect to three linearly independent geodesics of $\S^2$. In particular, the groups of isometries of $\S^2$ that leave invariant a Platonic solid inscribed in $\S^2$ are in the conditions of Corollary \ref{3planes}.

\subsection{Construction of flat $\cH$-hypersurfaces}\label{sec:constru}

Let $\Sigma$ be a complete flat $\cH$-hypersurface, i.e. $\Sigma$ is the form $\Sigma = \alfa\times \R^{n-1}$, where $\alfa$ is a complete regular curve in a two-dimensional plane $\Pi\equiv \R^2\subset \R^{n+1}$ (here, we denote $\R^{n-1}\equiv \Pi^{\perp}$). It follows from \eqref{presH} that $\alfa$ satisfies

\begin{equation}\label{planmin}
\kappa_{\alfa} = \frac{1}{n} \cH({\bf n}),
\end{equation}
where $\kappa_{\alfa}$, ${\bf n}$ denote, respectively, the geodesic curvature and unit normal of the planar curve $\alfa$.
This equation might be seen as an analogous of the planar Minkowski problem; note, however, that in our case $\cH$ is not assumed to be positive, and $\alfa$ is not necessarily closed.

Let us consider then $\cH\in C^1(\S^n)$, let $\{e_1,e_2\}$ be a positively oriented orthonormal basis of $\Pi$, and define a $2\pi$-periodic function $\hat{\cH} \in C^1(\R)$ by 
$$\hat{\cH}(\theta):= \frac{1}{n} \cH(-\sin \theta e_1 + \cos \theta e_2).$$
Fix $v=\cos \theta_0 e_1 + \sin \theta_0 e_2\in \Pi$, for some $\theta_0$. If $\hat{\cH}(\theta_0)=0$, the straight line generated by $v$ solves \eqref{planmin}, and the corresponding $\cH$-hypersurface $\Sigma$ is a hyperplane in $\R^{n+1}$.

Suppose now that $\hat{\cH}(\theta_0)\neq 0$, and let $I_0\subset \R$ denote the largest interval containing $\theta_0$ where $\hat{\cH}$ does not vanish. Then we may consider $F(x)\in C^2(I_0)$ to be a primitive of $1/\hat{\cH}(x)$ in $I_0$, with $F(\theta_0)=0$. By periodicity of $\hat{\cH}$, we have $F'\geq c>0$ for some $c$. If $\hat{\cH}>0$ everywhere, then $I_0=\R$ and we can define the inverse function $F^{-1}$ of $F$, which is globally defined on $\R$. If $\hat{\cH}=0$ somewhere, then $I_0$ is a bounded open interval $(a,b)$ in $\R$, and $F'(x)\to \8$ as $x\to \{a,b\}$. Thus, the same conclusion for $F^{-1}$ holds.

If we write now $\alfa'(s)=\cos \theta(s) e_1 + \sin \theta(s) e_2$ for an arclength parametrization $\alfa(s)$ of $\alfa$, equation \eqref{planmin} is rewritten as 
 \begin{equation}\label{planmin2}
 \theta'(s)= \hat{\cH} (\theta(s)),
 \end{equation}
that, with the initial condition $\theta(0)=\theta_0$, has as unique solution $\theta(s)= F^{-1}(s):\R\flecha I_0$. By construction, $\alfa(s)$ is complete.

Thus, we arrive at the following result.

\begin{pro}\label{cch}
Let $\cH\in C^1(\S^n)$. Fix $v\in \S^n$ and a two-dimensional linear subspace $\Pi\equiv \R^2$ in $\R^{n+1}$ with $v\in \Pi$. Then, there exists a unique (up to translation) complete regular curve $\alfa=\alfa_{v}$ in $\R^2$ with the following properties:
 \begin{enumerate}
 \item
$\Sigma_{v,\Pi}:=\alfa \times \R^{n-1}$ is a (complete, flat) $\cH$-hypersurface in $\R^{n+1}$.
 \item
$v$ is tangent to $\Sigma_{v,\Pi}$ at some point.
 \end{enumerate}
Conversely, any complete flat $\cH$-hypersurface in $\R^{n+1}$ is one of the examples $\Sigma_{v,\Pi}$.
\end{pro}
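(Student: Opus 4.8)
The plan is to separate the existence-and-uniqueness claim from the converse, since most of the analytic work has already been carried out in the discussion preceding the statement. First I would set up the reduction: a complete flat $\cH$-hypersurface must split as $\alfa\times\R^{n-1}$ for a complete regular planar curve $\alfa$ in a two-dimensional plane $\Pi$, and equation \eqref{presH} forces $\alfa$ to satisfy \eqref{planmin}. Writing $\alfa'(s)=\cos\theta(s)e_1+\sin\theta(s)e_2$ for an arclength parametrization and a positively oriented orthonormal basis $\{e_1,e_2\}$ of $\Pi$, the equation \eqref{planmin} becomes the autonomous ODE \eqref{planmin2}, namely $\theta'(s)=\hat{\cH}(\theta(s))$. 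Prescribing that $v=\cos\theta_0 e_1+\sin\theta_0 e_2$ be tangent to $\Sigma_{v,\Pi}$ at some point amounts to imposing the initial condition $\theta(0)=\theta_0$.

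Next I would invoke the construction already described in the paragraphs above: if $\hat{\cH}(\theta_0)=0$ the solution is the constant $\theta\equiv\theta_0$, giving a straight line and hence a hyperplane; if $\hat{\cH}(\theta_0)\neq 0$, then on the maximal interval $I_0$ containing $\theta_0$ on which $\hat{\cH}$ does not vanish, the primitive $F$ of $1/\hat{\cH}$ with $F(\theta_0)=0$ satisfies $F'\geq c>0$ and $F:I_0\to\R$ is a bijection (using $2\pi$-periodicity of $\hat{\cH}$ when $\hat{\cH}>0$, and $F'\to\infty$ at the endpoints when $\hat{\cH}$ vanishes somewhere). Thus $\theta(s)=F^{-1}(s):\R\to I_0$ is the unique global solution of \eqref{planmin2} with $\theta(0)=\theta_0$, and integrating $\alfa'(s)=\cos\theta(s)e_1+\sin\theta(s)e_2$ produces a complete regular curve $\alfa=\alfa_v$, unique up to the choice of $\alfa(0)$, i.e. up to translation in $\Pi$. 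This gives both existence and uniqueness of $\alfa_v$, and hence of $\Sigma_{v,\Pi}=\alfa_v\times\R^{n-1}$ up to translation, proving properties (1) and (2).

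For the converse, let $\Sigma$ be an arbitrary complete flat $\cH$-hypersurface in $\R^{n+1}$. Flatness together with completeness means $\Sigma$ is a generalized cylinder: the classical structure theorem for complete flat hypersurfaces in Euclidean space gives that $\Sigma$ splits isometrically and as a submanifold as $\alfa\times\R^{n-1}$, where $\alfa$ is a complete regular curve lying in a two-dimensional affine plane $\Pi$, and the $\R^{n-1}$ factor is the rulings direction $\Pi^\perp$. The Gauss map of $\Sigma$ at a point $(\alfa(s),w)$ is exactly $\mathbf{n}(s)\in\Pi\cap\S^n$, so the prescribed mean curvature equation \eqref{presH} reduces precisely to \eqref{planmin} for $\alfa$. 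Picking any unit tangent vector $v$ to $\alfa$ at one point, $\alfa$ solves \eqref{planmin2} with initial angle $\theta_0$ determined by $v$, and by the uniqueness just established $\alfa$ coincides, up to translation, with $\alfa_v$; hence $\Sigma=\Sigma_{v,\Pi}$.

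The only genuinely delicate point is the structural reduction in the converse: asserting that a complete flat hypersurface in $\R^{n+1}$ (with $n\geq 2$) is a metric and extrinsic product $\alfa\times\R^{n-1}$ over a planar curve. For $n=2$ this is the classical fact that complete flat surfaces in $\R^3$ are planes or cylinders over complete plane curves (Hartman–Nirenberg / Massey); for general $n$ one uses that the relative nullity distribution of a flat hypersurface has rank $n-1$, is autoparallel and totally geodesic in $\R^{n+1}$, so its leaves are $(n-1)$-planes, and completeness upgrades this to a global product splitting. I would cite this as a known result rather than reprove it. Everything else — the ODE analysis, completeness of $\alfa$, and uniqueness up to translation — is exactly the computation performed in the paragraphs preceding the proposition, so the proof is essentially an assembly of those pieces.
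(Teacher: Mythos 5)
Your proof is correct and follows essentially the same route as the paper: the paper does not give a formal proof of Proposition~\ref{cch}, but rather presents it as the conclusion of the preceding discussion, which is exactly the ODE analysis of $\theta'=\hat{\cH}(\theta)$ that you reproduce. The only place you go slightly beyond the paper's text is in making explicit the structural splitting $\Sigma=\alfa\times\R^{n-1}$ of a complete flat hypersurface via the Hartman--Nirenberg cylinder theorem (and the relative-nullity argument for general $n$); the paper silently takes this for granted through the ``i.e.'' in the sentence introducing \eqref{planmin}, so your version is a useful explicitation rather than a genuine departure.
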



The hypersurface $\Sigma_{v,\Pi}=\alfa \times \R^{n-1}$ is diffeomorphic to $\S^1\times \R^{n-1}$ or to $\R^n$, depending on whether $\alfa$ is a closed curve or not. By construction, a necessary condition for $\alfa$ to be closed is that $\cH$ restricted to $\S^n\cap \Pi$ never vanishes. Thus, the next result follows directly from the classical solution to Minkowski problem for planar curves:

\begin{cor}\label{difci}
Given $\cH\in C^1(\S^n)$, let $\Sigma_{v,\Pi}=\alfa\times \R^{n-1}$ be one of the $\cH$-hypersurfaces in $\R^{n-1}$ constructed in Proposition \ref{cch}. The next two conditions are equivalent:
 \begin{enumerate}
 \item
$\Sigma_{v,\Pi}$ is diffeomorphic to $\S^1\times \R^{n-1}$ (i.e. $\alfa$ is closed).
 \item
If we denote $S^1:=  \S^n\cap \Pi$, then $\cH(\xi) \neq 0$ for every $\xi \in S^1$, and $$\int_{S^1} \frac{\xi}{\cH(\xi)} d\xi =0.$$
 \end{enumerate}
\end{cor}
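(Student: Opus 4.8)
The plan is to reduce the statement to the classical Minkowski problem for closed convex planar curves, using the explicit construction of $\alfa = \alfa_{v}$ carried out just before Proposition \ref{cch}. Recall that $\alfa$ is built from the $2\pi$-periodic function $\hat{\cH}(\theta) = \frac{1}{n}\cH(-\sin\theta\, e_1 + \cos\theta\, e_2)$ via the ODE $\theta'(s) = \hat{\cH}(\theta(s))$; the tangent indicatrix of $\alfa$ is $\alfa'(s) = \cos\theta(s)\, e_1 + \sin\theta(s)\, e_2$. The curve $\alfa$ is closed if and only if (a) its tangent direction $\theta(s)$ winds once around the circle, and (b) the position vector returns to its starting point after one such period.

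First I would observe that $\alfa$ is closed only if $\hat{\cH}$ never vanishes on $\R$: if $\hat{\cH}(\theta_*) = 0$ for some $\theta_*$, then either $\theta_0 = \theta_*$ (and $\alfa$ is a straight line) or the solution $\theta(s)$ of \eqref{planmin2} is monotone and bounded, converging to a zero of $\hat{\cH}$ without completing a full turn, so $\alfa'$ never returns to its initial value and $\alfa$ is a properly embedded non-compact curve. This already shows that the condition ``$\cH(\xi)\neq 0$ for every $\xi\in S^1$'' is necessary. Conversely, when $\hat{\cH}$ has a fixed sign (say $\hat{\cH}>0$, after possibly reversing orientation), $\theta(s) = F^{-1}(s)$ is a strictly increasing diffeomorphism of $\R$ onto itself, and by the $2\pi$-periodicity of $\hat{\cH}$ one gets $F(\theta + 2\pi) = F(\theta) + L$ with $L = \int_0^{2\pi} d\theta/\hat{\cH}(\theta) > 0$; hence $\theta(s+L) = \theta(s) + 2\pi$, so the tangent indicatrix is $L$-periodic and $\alfa$ is closed precisely when its position $\alfa(s)$ is also $L$-periodic.

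Next I would compute the period condition on $\alfa$ itself. Parametrizing by $\theta$ instead of $s$ (legitimate since $ds = d\theta/\hat{\cH}(\theta)$), one has $\alfa(s(\theta + 2\pi)) - \alfa(s(\theta)) = \int_{\theta}^{\theta+2\pi} \alfa'\, ds = \int_0^{2\pi} \frac{\cos\theta\, e_1 + \sin\theta\, e_2}{\hat{\cH}(\theta)}\, d\theta$. Now I identify $\S^n\cap\Pi = S^1$ with $\{\cos\theta\, e_1 + \sin\theta\, e_2\}$; under this identification the unit normal appearing in $\hat{\cH}$ is the rotation by $\pi/2$ of the tangent, and unwinding the definition $\hat{\cH}(\theta) = \frac1n\cH(-\sin\theta\, e_1+\cos\theta\, e_2)$ together with $d\xi = d\theta$ on $S^1$, the vanishing of the above integral is exactly $\int_{S^1} \xi/\cH(\xi)\, d\xi = 0$ (the constant $\frac1n$ is irrelevant for a vanishing integral, and one checks the reparametrization of the circle does not change the value of this vector-valued integral). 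This gives the equivalence of the two conditions; I would remark that this period integral is the classical closure condition in the Minkowski problem, where $1/\cH$ plays the role of the radius of curvature as a function of the Gauss map angle.

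The main obstacle, and the only point requiring care, is the bookkeeping in the last step: matching the sign conventions and the choice of orientation between $\hat{\cH}$ (defined with the rotated frame $-\sin\theta\, e_1 + \cos\theta\, e_2$) and the parametrization of $S^1 = \S^n\cap\Pi$ used in the statement, and verifying that reversing the orientation of $\Pi$ (needed to normalize $\hat{\cH}>0$ in the converse direction) flips the sign of the integrand but not whether the integral vanishes. Everything else — the ODE solvability, completeness of $\alfa$, periodicity of the tangent indicatrix — is already established in the discussion preceding Proposition \ref{cch} and in Proposition \ref{cch} itself, so the proof is essentially a translation of the closure condition for $\alfa$ into the intrinsic language of $\cH$ on $S^1$.
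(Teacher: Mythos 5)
Your proof is correct and follows the same route as the paper, which reduces the question to the closure condition for the planar Minkowski problem via the ODE $\theta'(s)=\hat{\cH}(\theta(s))$ set up before Proposition \ref{cch}. The paper cites the classical solution to that problem as a black box for the implication; you instead derive the closure condition directly by computing the period displacement $\displaystyle\int_0^{2\pi}\frac{\cos\theta\,e_1+\sin\theta\,e_2}{\hat{\cH}(\theta)}\,d\theta$ and observing, correctly, that the $\pi/2$-rotation relating the tangent $\alfa'$ to the normal $\xi$, together with the constant factor $1/n$, is irrelevant to the vanishing of the (vector-valued) integral, so this is the same as $\int_{S^1}\xi/\cH(\xi)\,d\xi=0$.
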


\subsection{A compactness theorem}\label{sec:compac}

We introduce next a compactness theorem for the space of $\cH$-surfaces in $\R^3$ with bounded second fundamental form 
that will be used in later sections. The argument in the next theorem is well-known for CMC surfaces, see e.g. Section 2 in \cite{RST}. We sketch it here for the case where $\cH$ is not constant.

\begin{teo}\label{compa}
Let $(\Sigma_n)_n$ be a sequence of $\cH_n$-surfaces in $\R^3$ for some sequence of functions $\cH_n\in C^k(\S^2)$, $k\geq 1$, and take $p_n\in \Sigma_n$. Assume that the following conditions hold:
 \begin{enumerate}
 \item[(i)]
There exists a sequence of positive numbers $r_n\to \8$ such that the geodesic disks $D_n= D_{\Sigma_n} (p_n, r_n)$ are contained in the interior of $\Sigma_n$, i.e. $d_{\Sigma_n} (p_n,\parc \Sigma_n) \geq r_n$.
 \item[(ii)]
$(p_n)_n \to p$ for a certain $p\in \R^3$.
 \item[(iii)]
If $|\sigma_n |$ denotes the length of the second fundamental form of $\Sigma_n$, then there exists $C>0$ such that $| \sigma_n | (x) \leq C$ for every $n$ and every $x \in \Sigma_n$.
 \item[(iv)]
$\cH_n \to \cH$ in the $C^k$ topology to some $\cH \in C^k(\S^n)$.
 \end{enumerate}
Then, there exists a subsequence of $(\Sigma_n)_n$ that converges uniformly on compact sets in the $C^{k+2}$ topology to a complete, possibly non-connected, $\cH$-surface $\Sigma$ of bounded curvature that passes through $p$.
\end{teo}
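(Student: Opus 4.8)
The plan is to argue by a standard compactness-for-immersions scheme adapted to the prescribed mean curvature setting. The key point is that conditions (i) and (iii) give us uniform control on the local geometry of each $\Sigma_n$ near $p_n$: with $|\sigma_n|\le C$ everywhere and with intrinsic geodesic disks of radius $r_n\to\8$ centered at $p_n$, there is a uniform $\delta>0$ (depending only on $C$) such that each $\Sigma_n$ can be written, in a fixed-size neighborhood of $p_n$, as a graph of a function $u_n$ over the affine tangent plane $T_{p_n}\Sigma_n$ at $p_n$, with uniform bounds on $u_n$ and its first and second derivatives. This is a purely local statement: the bound on the second fundamental form forces the exponential map of $\Sigma_n$ at $p_n$ to be a diffeomorphism on a disk of uniform radius, with the metric uniformly close to the flat one, and it controls the normal curvature so that no folding occurs.

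Next I would bring in the PDE. By Lemma \ref{tan} and the discussion preceding it, each such graph $u_n$ solves the quasilinear elliptic equation \eqref{eqH} with $\cH$ replaced by $\cH_n$. Since we already have a uniform $C^2$ bound on $u_n$, the equation is uniformly elliptic with coefficients bounded in $C^k$ (here we use (iv): the $\cH_n$ converge in $C^k$, hence are uniformly bounded in $C^k$). Elliptic Schauder estimates then upgrade the $C^2$ bound to a uniform $C^{k+2,\alfa}$ bound on $u_n$ on a slightly smaller disk. Passing to a subsequence, $u_n\to u$ in $C^{k+2}$ on that disk, and by (iv) the limit $u$ solves \eqref{eqH} for the limit function $\cH$. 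This produces a limit $\cH$-surface near $p$; note that after a further subsequence we may also assume the tangent planes $T_{p_n}\Sigma_n$ converge, so the graphs are all taken over a single fixed plane through $p$.

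To globalize, I would run the usual diagonal/exhaustion argument. Fix any $R>0$; since $r_n\to\8$, eventually $D_{\Sigma_n}(p_n,R)$ is interior to $\Sigma_n$, and the uniform curvature bound lets us cover $D_{\Sigma_n}(p_n,R)$ by a controlled number of the uniform-size graph patches described above, the count depending only on $R$ and $C$. Applying the local convergence on each patch and a diagonal argument over $R=1,2,3,\dots$ and over a countable dense set of patch centers, we extract a subsequence converging in $C^{k+2}$ on compact sets to a surface $\Sigma$ which is an $\cH$-surface, passes through $p$, has $|\sigma|\le C$, and is complete: any divergent path in $\Sigma$ of finite length would be a limit of paths in the $\Sigma_n$ staying in a fixed compact region at bounded intrinsic distance from $p_n$, contradicting that the $D_{\Sigma_n}(p_n,r_n)$ are interior with $r_n\to\8$. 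We do not claim connectedness, since distinct sheets of the $\Sigma_n$ passing near the same point in $\R^3$ may survive in the limit as distinct components.

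The main obstacle is the first step: extracting from the pointwise bound $|\sigma_n|\le C$ a genuinely uniform graph representation, i.e. a fixed radius and fixed $C^2$ bound independent of $n$. This requires a careful use of the fact that a hypersurface with bounded second fundamental form has a uniform ``regularity scale'': near any point it is a graph over its tangent plane with controlled gradient and Hessian. One must also be slightly careful that the limit patches glue consistently as immersed (not embedded) pieces, which is why the conclusion allows $\Sigma$ to be non-connected and merely immersed. Everything after that — Schauder estimates, Arzel\`a--Ascoli, the diagonal argument, completeness of the limit — is routine, exactly as in the CMC case treated in \cite{RST}, the only new ingredient being that the right-hand side $\cH_n(Z_{u_n})$ of \eqref{eqH} now varies with $n$ but converges in $C^k$ by hypothesis (iv).
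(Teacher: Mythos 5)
Your proposal follows essentially the same route as the paper: uniform local graphs from the curvature bound (the paper cites Proposition 2.3 in \cite{RST} for exactly the "regularity scale" fact you flag as the main obstacle), Schauder bootstrap on the quasilinear equation \eqref{eqH} using the $C^k$ bound on $\cH_n$, Arzel\`a--Ascoli, and a diagonal/exhaustion argument to produce the complete limit $\cH$-surface. The only slip is attributing equation \eqref{eqH} to Lemma \ref{tan} rather than to the discussion preceding it, but this is cosmetic.
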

\begin{proof}
By conditions (i), (iii) and by virtue of a well-known result in surface theory (see e.g. Proposition 2.3 in \cite{RST}), there exist positive constants $\delta,M$ that only depend on $C$ (and not on $n$, $\cH_n$ or $\Sigma_n$), such that, if $n$ is large enough:
 \begin{enumerate}
 \item[a)]
An open neighbourhood of $p_n$ in $D_n\subset \Sigma_n$ is the graph of a function $u_n$ over the Euclidean disk $D_{\delta}:=D(0,\delta)$ of radius $\delta$ in $T_{p_n}\Sigma_n$.
 \item[b)]
The $C^2$ norm of $u_n$ in $D_{\delta}$ is not greater than $M$.
 \end{enumerate}
Since $\Sigma_n$ is an $\cH_n$-surface, it follows that, in adequate Euclidean coordinates $(x^n,y^n,z^n)$ with respect to which $T_{p_n}\Sigma_n= \{z^n=0\}$, each function $u_n$ is a solution in $D_{\delta}$ to the quasilinear elliptic equation
 \begin{equation}\label{eqhn}
 {\rm div}\left(\frac{Du}{\sqrt{1+|Du|^2}}\right) = 2 \cH_n (Z_u), \hspace{1cm} Z_u:= \frac{(-Du,1)}{\sqrt{1+|Du|^2}}.
 \end{equation}
Note that \eqref{eqhn} for $u_n$ can be rewritten as a linear elliptic PDE $L[u_n]=f_n$ where the coefficients of $L$ depend $C^{\8}$-smoothly on $Du_n$, and $f_n$ depends $C^k$-smoothly on $Du_n$. By condition b) above, we have $u_n\in C^{1,\alfa}(D_{\delta})$ for all $n$. Thus, all these coefficients are bounded in the $C^{0,\alfa}(D_{\delta})$ norm.
Then, by the usual Schauder estimates (see Gilbarg-Trudinger, \cite{GT} Chapter 6), for any $\delta' \in (0,\delta)$ we conclude that there exists a constant $C'$ (again independent of $n$) such that $||u_n||\leq C'$ in the $C^{2,\alfa}(D_{\delta'})$ norm. In particular, all coefficients of $L[u_n]=f_n$ are uniformly bounded in the $C^{1,\alfa}(D_{\delta'})$ norm. By repeating this argument we eventually obtain

$$||u_n||_{C^{k+2,\alfa}(D_{\delta'})} \leq C'', \hspace{1cm} 0<\alfa <1,$$ for some constant $C''$ independent of $n$. In these conditions, we may apply the Arzela-Ascoli theorem, and deduce by (ii), (iv) that a subsequence of the functions $u_n$ converge on $D_{\delta'}$ in the $C^{k+2}$ topology to a solution $u\in C^{k+2}(D_{\delta'})$ to
  \begin{equation}\label{eqhn2}
 {\rm div}\left(\frac{Du}{\sqrt{1+|Du|^2}}\right) = 2 \cH (Z_u), \hspace{1cm} Z_u:= \frac{(-Du,1)}{\sqrt{1+|Du|^2}}.
 \end{equation}
Thus, the graph of $u$ is an $\cH$-surface in $\R^3$ that by construction passes through $p$, and has second fundamental form bounded by $C$.
 
Consider next some $y\in D_{\delta'}$, and let $q$ be the corresponding point in the graph of $u$. It is then clear that there exist points $q_n\in \R^n$ in the graphs of $u_n$, all corresponding to $y$, and such that $q_n\to q$. Thus, passing to a subsequence if necessary so that condition (i) is fulfilled, we can repeat the same process above, this time with respect to the points $q_n$ and $q$. In this way, we obtain an $\cH$-surface $\Sigma$ in $\R^3$ that extends the graph of $u$ over $D_{\delta'}$.

Again by (i), it follows by a standard diagonal process that $\Sigma$ can be extended to a complete $\cH$-surface (which will also be denoted by $\Sigma$), that passes through $p$, and whose second fundamental form is bounded by $C$. Moreover, $\Sigma$ is by construction a limit in the $C^{k+1}$ topology on compact sets of the sequence of surfaces $(\Sigma_n)_n$; note that some other limit connected components could also appear in this process. This completes the proof.
\end{proof}

\section{Properly embedded $\cH$-surfaces}\label{sec:structure}
%
%

In this section we will study properly embedded $\cH$-surfaces of finite topology in $\R^3$, for $\cH\in C^1(\S^2)$, $\cH>0$. In Section \ref{diacur} we will recall a diameter estimate for horizontal sections of graphs with positive mean curvature by Meeks (Lemma \ref{lemi}), and we will obtain a curvature estimate for $\cH$-surfaces away from their boundary, see Theorem \ref{th:curv} and Remark \ref{rem:curv}. In Section \ref{hees} we will provide several a priori height estimates for $\cH$-graphs with zero boundary values over closed, not necessarily bounded, planar domains. These estimates will be used in Section \ref{estru} to study properly embedded $\cH$-surfaces in $\R^3$.

\subsection{Curvature and horizontal diameter estimates}\label{diacur}

The next result is essentially due to Meeks \cite{Me}:

\begin{lem}\label{lemi}
Let $\Sigma\subset \R^3$ be a graph $z=u(x,y)$ over a closed (not necessarily bounded) domain of $\R^2$, with zero boundary values. Assume that the mean curvature $H_{\Sigma}$ of $\Sigma$ satisfies $H_{\Sigma}>H_0$ for some $H_0>0$.

Then, for every $t>2/H_0$, the diameter of each connected component of $\Sigma\cap \{|z|=t\}$ is at most $2/H_0$. In particular, all connected components of $\Sigma\cap \{|z|\geq t\}$ for $t>2/H_0$ are compact.
\end{lem}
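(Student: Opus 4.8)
The plan is to reduce the statement to a comparison with hemispheres of the ``right'' radius, using the hypothesis $H_\Sigma > H_0 > 0$ and the fact that $\Sigma$ is a graph with zero boundary values. First I would recall the setup: $\Sigma = \{z = u(x,y)\}$ over a closed domain $\Omega \subset \R^2 = \{z=0\}$, with $u = 0$ on $\partial\Omega$, and with mean curvature (computed with respect to the upward normal, say) satisfying $H_\Sigma > H_0$. The key model object is the lower hemisphere of radius $1/H_0$, which is a graph of constant mean curvature exactly $H_0$ over a disk of radius $1/H_0$, whose boundary lies in a horizontal plane and whose ``height'' below that plane is $1/H_0$.

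\textbf{Main steps.} Fix $t > 2/H_0$ and let $C$ be a connected component of $\Sigma \cap \{|z| = t\}$; by symmetry assume $z = t > 0$ on $C$ (the case $z=-t$ is identical after a reflection). Suppose, for contradiction, that two points $p, q \in C$ satisfy $|p - q| > 2/H_0$. The idea is to slide a hemispherical cap of radius $1/H_0$ from far away towards $\Sigma$ along a suitable horizontal direction, keeping its boundary circle in the plane $\{z = t'\}$ for a value $t'$ slightly below $t$, chosen so that the cap stays strictly above the plane $\{z=0\}$ where $\partial\Sigma$ sits (this is where $t > 2/H_0$ and the zero boundary values are used: the graph near height $t$ cannot ``escape downward'' past its own boundary). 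Because the cap has mean curvature $H_0 < H_\Sigma$ and the two surfaces are suitably oriented, a first interior (or boundary) contact point yields a contradiction with the tangency/maximum principle for the prescribed-mean-curvature-type comparison — or rather, with the classical mean-curvature comparison, since here only the strict inequality $H_\Sigma > H_0$ is needed, not the full $\cH$-hypersurface equation. The geometric point is that if $\mathrm{diam}(C) > 2/H_0$, then a cap of radius $1/H_0$ can be positioned so that its boundary circle ``links'' the two far-apart points of $C$ in the horizontal plane, forcing the cap to meet $\Sigma$ before its boundary can clear $\Sigma$; comparing mean curvatures at the contact point gives the contradiction. Hence $\mathrm{diam}(C) \leq 2/H_0$. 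The final assertion — that components of $\Sigma \cap \{|z| \geq t\}$ are compact for $t > 2/H_0$ — then follows: each such component is a graph over a domain all of whose points lie within horizontal distance $2/H_0$ of a level set $\{|z| = t\}$ component (which we have just shown has bounded diameter), hence is bounded, and being closed it is compact.

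\textbf{The main obstacle.} The delicate point is making the sliding argument rigorous: one must verify that the hemispherical cap can actually be brought into a position where it is disjoint from $\Sigma$ to begin with (coming from spatial infinity in a horizontal direction, using that $\Sigma$ is a graph so it is ``one-sheeted'' over $\Omega$), and that during the sliding motion the cap's boundary never touches $\Sigma$ before its interior does — this is exactly where the choice of the plane $\{z = t'\}$ for the cap's boundary, with $0 < t' < t$, together with the zero boundary values of $u$, is essential: the graph $\Sigma$ between heights $0$ and $t$ cannot interfere with the boundary circle of the cap if that circle is placed at an appropriate intermediate height and the cap opens upward toward $\{z = t\}$. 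I would also need to be careful about the orientation conventions so that the mean curvature comparison genuinely produces a contradiction rather than being vacuous. Once the configuration is set up correctly, the contradiction itself is just the standard interior/boundary maximum principle for the mean curvature operator, which is available here.
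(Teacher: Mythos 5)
Your high-level strategy is on the right track: you correctly identify that the constant $2/H_0$ is the diameter of a sphere of mean curvature $H_0$, that the engine of the proof is a comparison/first-contact argument against such spheres, and that the hypotheses $t>2/H_0$ and $u=0$ on $\partial\Omega$ are what give room to manoeuvre the barrier. However, the concrete implementation you sketch — sliding a hemispherical \emph{cap} with its boundary circle pinned to a fixed horizontal plane $\{z=t'\}$, $0<t'<t$ — has a genuine gap, which you yourself flag but do not resolve, and I do not see how to resolve it in that form. The graph $\Sigma$ occupies every height in $[0,t]$ near both $\partial\Omega$ and the level-$t$ component $C$; hence as you slide the cap horizontally there is no reason its \emph{boundary circle} (which lies at height $t'$, a height $\Sigma$ actually attains) stays away from $\Sigma$, and a first contact at the boundary of the barrier gives you nothing from the maximum principle. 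Moreover, the ``linking'' of the boundary circle with the two far-apart points $p,q\in C$ is never made precise: $C$ lives in the plane $\{z=t\}$ and the circle in $\{z=t'\}$, and you have not explained what topological invariant forces the cap to meet $\Sigma$ rather than slide past it through a gap between levels. Without these two points the sliding argument does not close.

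The paper avoids both difficulties by a different construction. After normalizing $u\geq 0$ with the mean curvature vector pointing into the region $\mathcal U$ between $\Sigma$ and $\{z=0\}$, one picks an arc $\Gamma$ in the \emph{plane} (not in $\Sigma$) joining two points $p_1,p_2$ at Euclidean distance $>2/H_0$, with $u\geq t$ along all of $\Gamma$; then the vertical ``wall'' $S=\Gamma\times[0,t]$ lies entirely inside $\mathcal U$ and divides the solid slab $\mathcal C=\{|x|\leq x_0,\,0\leq z\leq t\}$ into two components $\mathcal C_1,\mathcal C_2$. A \emph{full} sphere of radius $1/H_0$ (no boundary, so no boundary-contact issue) is slid from $\mathcal C_1$ through $S$ into $\mathcal C_2$ inside $\mathcal C$; the piece of the sphere that has crossed $S$ lies in $\mathcal U$, so mean curvature comparison forbids any first contact with $\Sigma$ from that side, and the whole sphere ends up inside $\mathcal C_2\cap\mathcal U$. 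Then a \emph{separate} contact argument finishes: a ball of radius $1/H_0$ cannot fit inside $\mathcal U$, because moving it upward to a first contact with $\Sigma$ again contradicts $H_\Sigma>H_0$. This two-step structure — first trap a whole sphere inside $\mathcal U$ using the wall, then reach a contradiction by a one-sided tangency — is exactly the rigorous replacement for your ``linking'' intuition, and it removes the boundary-circle problem entirely. Your closing remark about compactness of the components of $\Sigma\cap\{|z|\geq t\}$ is also stated too quickly (it is not immediate that every point of such a component is within horizontal distance $2/H_0$ of a fixed level set component), but the diameter bound is the main issue to repair.
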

\begin{proof}
The argument follows the ideas of the proof of Lemma 2.4 in \cite{Me}, so we will only give here a sketch of it, following a slightly simplified version of Meeks' original proof, that can be found in \cite[Theorem 4]{AEG}, or \cite[Theorem 6.2]{EGR}.

Without loss of generality, we may assume that $u\geq 0$ and that, if $\cU$ denotes the unique connected component of $\R^3$ determined by the plane $z=0$ and the graph $\Sigma$, the mean curvature vector of $\Sigma$ points towards $\cU$.

Assume that there exist $t>2/H_0$ and a connected component of $\Sigma\cap\{z=t\}$ with a diameter greater than $2/H_0$. Let $\Omega\subset \R^2$ denote the domain where the graph $\Sigma$ is defined. Then, there exists a simple arc $\Gamma\subset\Omega$ such that the Euclidean distance between its extrema $p_1,p_2$ is greater than $2/H_0$, and so that $u(p)\geq t$ for all $p\in \Gamma$. Besides, there is no restriction in assuming that the Euclidean distance between any other two points of $\Gamma$ is smaller than the distance from $p_1$ to $p_2$. Up to a horizontal isometry, we can take $p_1=(-x_0,0),p_2=(x_0,0)$, with $x_0>1/H_0$.

In this way, the ``rectangle'' surface with boundary $S=\Gamma\times[0,t]$ lies entirely in $\cU$. Besides, $S$ divides the solid region
$$
{\cal C}=\{(x,y,z)\in\R^3:\ |x|\leq x_0,\ 0\leq z\leq t\}
$$
into two connected components ${\cal C}_1,{\cal C}_2$.

Hence, we can place a sphere of radius $1/H_0$ inside ${\cal C}_1$, and move it continuously towards ${\cal C}_2$ without leaving the interior of ${\cal C}$. Consider now just the piece of sphere that passes through $S$ into ${\cal C}_2$. It its clear that this piece cannot touch $\Sigma$, by the mean curvature comparison principle. Hence, the sphere could go through $S$ completely, and end up being contained in  ${\cal C}_2\cap{\cal U}$. But, obviously, a sphere of radius $1/H_0$ cannot be contained in the connected component $\cU$, since we could then move it upwards until reaching a first contact point with $\Sigma$, and this would contradict again the mean curvature comparison principle.
\end{proof}
The next result is a curvature estimate inspired by \cite{RST}.

\begin{teo}\label{th:curv}
Let $\Lambda,d,\rho$ be positive constants. Then there exists $C=C(\Lambda,d,\rho)>0$ such that the following assertion is true:

Let $\Sigma$ be any immersed oriented surface in $\R^3$, possibly with non-empty boundary, let $\sigma, H_{\Sigma},\eta$ denote, respectively, its second fundamental form, its mean curvature and its unit normal, and assume that:
 \begin{equation}\label{asscur1}
|H_{\Sigma}| + | \nabla H_{\Sigma} | \leq \Lambda \hspace{1cm} \text{on $\Sigma$}.
 \end{equation}
 \begin{equation}\label{asscur2}
\eta(\Sigma)\subset \S^2 \text{ omits a spherical disk of radius $\rho$.}
 \end{equation}
Then, for any $p\in \Sigma_d :=\{q\in \Sigma: d_{\Sigma}(q,\parc \Sigma) \geq d\}$, we have $$|\sigma(p)|\leq C.$$
\end{teo}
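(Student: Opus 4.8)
The plan is to argue by contradiction using the compactness theorem (Theorem \ref{compa}), in the spirit of the corresponding CMC result in \cite{RST}. Suppose the statement fails. Then there is a sequence of oriented immersed surfaces $\Sigma_n$ in $\R^3$, each satisfying \eqref{asscur1} with the same constant $\Lambda$ and \eqref{asscur2} with the same $\rho$, and points $p_n\in (\Sigma_n)_d$ with $|\sigma_n(p_n)|\to\8$. After a translation we may assume $p_n$ is the origin. The key device is to rescale: set $\lambda_n:=|\sigma_n(p_n)|\to\8$ and consider the blown-up surfaces $\widetilde{\Sigma}_n:=\lambda_n\,\Sigma_n$, with marked point still the origin. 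Under this homothety the second fundamental form scales by $1/\lambda_n$, so $|\widetilde{\sigma}_n(0)|=1$; and since the mean curvature of $\widetilde\Sigma_n$ is $\cH_n\circ\eta_n$ divided by $\lambda_n$ (where $\cH_n:=\Lambda\,\widehat h_n$ for suitable functions realizing the bound, or more precisely $\cH_n$ is any $C^1$ extension of the boundary data of $H_{\Sigma_n}$ as a function of the normal), hypothesis \eqref{asscur1} forces $|H_{\widetilde\Sigma_n}|+|\nabla H_{\widetilde\Sigma_n}|\le \Lambda/\lambda_n\to 0$.

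Next I would upgrade the mean-curvature bound into an honest "prescribed mean curvature" structure so that Theorem \ref{compa} applies. Since $H_{\widetilde\Sigma_n}$ is a bounded $C^1$ function on $\widetilde\Sigma_n$ with bounded gradient, and since \eqref{asscur2} says the Gauss image misses a fixed spherical disk $D(\xi_0,\rho)$, one can choose functions $\cH_n\in C^1(\S^2)$, defined on the complement of that disk as the given mean curvature (read off via the normal) and extended to all of $\S^2$ with $\|\cH_n\|_{C^1(\S^2)}\le \Lambda'/\lambda_n$, so that each $\widetilde\Sigma_n$ is an $\cH_n$-surface. Then $\cH_n\to 0$ in $C^1(\S^2)$. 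Now one needs the curvature bound required by Theorem \ref{compa}: from $|\widetilde\sigma_n(0)|=1$ alone this is not immediate, so the standard fix (the "point-picking" trick of Schoen) enters: replace $p_n$ by a point $p_n^\ast\in (\Sigma_n)_{d}$ maximizing $|\sigma_n(q)|\,d_{\Sigma_n}(q,\parc\Sigma_n)$ over $\{d_{\Sigma_n}(q,\parc\Sigma_n)\ge d\}$ — a maximum is not guaranteed on noncompact $\Sigma_n$, so instead maximize on the compact set $\{d_{\Sigma_n}(q,\parc\Sigma_n)\ge d/2\}$ a slightly modified weight, or simply work on the geodesic ball of radius $d/2$ about $p_n$ and rescale so that on $\widetilde\Sigma_n$ the second fundamental form is $\le 2$ on a geodesic ball of radius $\lambda_n d/2\to\8$ about the origin. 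With this choice, hypotheses (i)--(iv) of Theorem \ref{compa} hold for $(\widetilde\Sigma_n)_n$ with $r_n=\lambda_n d/2\to\8$, $p_n=0\to 0$, uniform curvature bound $2$, and $\cH_n\to 0$ in $C^1$.

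Applying Theorem \ref{compa}, a subsequence of $(\widetilde\Sigma_n)_n$ converges in $C^3_{\mathrm{loc}}$ to a complete immersed surface $\Sigma_\8$ in $\R^3$ with $H_{\Sigma_\8}\equiv 0$ — a complete minimal surface of bounded curvature, passing through the origin, with $|\sigma_{\Sigma_\8}(0)|=1$ (the $C^2$-convergence of the second fundamental form preserves its value at the origin and the uniform bound). At the same time, the Gauss map of $\Sigma_\8$ is a limit of the Gauss maps of $\widetilde\Sigma_n$, hence its image still omits a spherical disk of radius $\rho$; equivalently, the Gauss image of $\Sigma_\8$ is contained in $\S^2\setminus D(\xi_0,\rho)$. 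But a complete minimal surface in $\R^3$ whose Gauss image omits an open set must be a plane: this is classical (it follows, e.g., from the fact that the Gauss map of a complete non-flat minimal surface is dense — indeed, by Fujimoto, omits at most four points — or more elementarily from a Bernstein-type/Osserman argument, since the surface is then stable and has Gauss image avoiding a hemisphere's worth of directions; in any case a plane is forced). A plane has $\sigma\equiv 0$, contradicting $|\sigma_{\Sigma_\8}(0)|=1$. This contradiction proves the theorem.

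\textbf{Main obstacle.} The delicate point is the point-picking/rescaling bookkeeping needed to guarantee the uniform curvature bound (iii) of Theorem \ref{compa} for the blown-up sequence while keeping $r_n\to\8$ and keeping the marked points convergent — on noncompact surfaces with boundary one cannot simply take a global maximum of $|\sigma|$, so one must localize to geodesic balls of radius $d/2$ about $p_n$, run the maximization of $|\sigma_n(q)|\big(d/2-d_{\Sigma_n}(p_n,q)\big)$ there, and check that the rescaled radii still diverge; this is routine but must be done carefully. The other ingredient, that a complete minimal surface in $\R^3$ with Gauss image omitting an open set is a plane, is a standard and robust fact and poses no real difficulty.
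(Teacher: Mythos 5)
The proposal follows essentially the same contradiction/blow-up/point-picking/limit-to-a-plane route as the paper, and the point-picking bookkeeping you sketch (maximize $|\sigma_n(\cdot)|\,d_{\Sigma_n}(\cdot,\partial D_n)$ over the compact geodesic ball $D_n=B_{\Sigma_n}(p_n,d/2)$, then rescale by the value of $|\sigma_n|$ at the maximizer) is exactly what the paper does. The Osserman endgame is also identical.

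However, there is one step in your proposal that would fail as written: the claim that each rescaled surface $\widetilde\Sigma_n$ can be made into an $\cH_n$-surface for some $\cH_n\in C^1(\S^2)$ by ``reading off'' the mean curvature via the normal and extending. In general the mean curvature of an arbitrary immersed surface does \emph{not} factor through its Gauss map: the Gauss map of $\Sigma_n$ need not be injective, and two points with the same unit normal can carry different mean curvatures, so there is no well-defined $\cH_n$ with $H_{\widetilde\Sigma_n}=\cH_n\circ\eta_n$. Consequently Theorem \ref{compa} cannot be invoked as a black box here. The paper instead rewrites the relevant part of the proof of Theorem \ref{compa} directly in this weaker setting: it expresses each rescaled surface locally near the marked point as a graph $x_3=u_n(x_1,x_2)$ over a fixed small disk (via Prop.\ 2.3 of \cite{RST}), treats the mean curvature as a function $H_n(x_1,x_2)$ of the \emph{coordinates} rather than of the normal, notes that $\|H_n\|_{C^1}\to 0$ because of the rescaling and hypothesis \eqref{asscur1}, and then applies Schauder estimates to the linear elliptic PDE
\[
a_{11}(Du_n)\,u_{11}+2a_{12}(Du_n)\,u_{12}+a_{22}(Du_n)\,u_{22}=2H_n\,(1+|Du_n|^2)^{3/2}
\]
to get the $C^{2,\alpha}$ bounds feeding Arzel\`a--Ascoli and the diagonal argument. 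So the overall strategy is sound, but your appeal to Theorem \ref{compa} must be replaced by this direct local-graph/Schauder argument. (A minor additional slip: after rescaling, the curvature bound $\leq 2$ holds on a ball of radius $\lambda_n r_n/2$ around the maximizer $q_n$, where $r_n=d_{\Sigma_n}(q_n,\partial D_n)$, not on a ball of radius $\lambda_n d/2$ around $p_n$; the divergence $\lambda_n r_n\to\infty$ is what the point-picking inequality actually gives.)
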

\begin{proof}
Arguing by contradiction, assume that the statement is not true, i.e. there is a sequence $f_n:\Sigma_n \flecha \R^3$ of immersed oriented surfaces in $\R^3$ satisfying \eqref{asscur1}, \eqref{asscur2}, and points $p_n\in \Sigma_n$ such that $d_{\Sigma_n}(p_n,\parc \Sigma_n)\geq d$ and $|\sigma_n(p_n)|>n$ for all $n$, where $\sigma_n$ is the second fundamental form of $f_n$. Note that by rotating each $f_n(\Sigma_n)$ adequately in $\R^3$, we may assume that the Gaussian images of \emph{all} the $f_n:\Sigma_n\flecha \R^3$ omit the open spherical disk of radius $\rho$ centered at the north pole of $\S^2$.

Consider the compact intrinsic metric disk $D_n=B_{\Sigma_n}(p_n,d/2)$ in $\Sigma_n$, which by construction is at a positive distance from $\parc \Sigma_n$. Let $q_n$ be the maximum on $D_n$ of the function 
$$
h_n(x)=|\sigma_n(x)|d_{\Sigma_n}(x,\partial D_n)
$$
Clearly, $q_n$ lies in the interior of $D_n$, as $h_n$ vanishes on $\parc D_n$. Let $\lambda_n=|\sigma_n(q_n)|$ and $r_n=d_{\Sigma_n}(q_n,\partial D_n)$. Then,
\begin{equation}\label{lann}
\lambda_n r_n=|\sigma_n(q_n)| d_{\Sigma_n}(q_n,\partial D_n)=h_n(q_n)\geq h_n(p_n)>\frac{d\,n}{2}.
\end{equation}
Note that this implies that $(\landa_n)_n\to \8$ as $n \to \8$. Also, note that for every $z_n\in B_{\Sigma_n}(q_n,r_n/2)$ we have
\begin{equation}\label{destr}
d_{\Sigma_n}(q_n,\partial D_n)\leq 2 d_{\Sigma_n}(z_n,\partial D_n).
\end{equation}

Consider now the immersed oriented surfaces $g_n:B_{\Sigma_n}(q_n,r_n/2)\flecha \R^3$ obtained by applying a rescaling of factor $\landa_n$ to the restriction of $f_n$ to $B_{\Sigma_n}(q_n,r_n/2)$; that is, $g_n=\landa_n f_n$ restricted to $B_{\Sigma_n}(q_n,r_n/2)$. For short, we will sometimes write $M_n$ to denote this immersed surface given by $g_n$. 

By \eqref{destr}, we have the following estimate for the second fundamental form $\hat{\sigma}_n$ of $M_n$ at any point $z_n$ in $B_{\Sigma_n}(q_n,r_n/2)$:
\begin{equation}\label{unisec}
|\hat{\sigma}_{n} (z_n)|= \frac{|\sigma_n(z_n)|}{\lambda_n} =\frac{h_n(z_n)}{\lambda_n d_{\Sigma_n}(z_n,\partial D_n)}\leq\frac{h_n(q_n)}{\lambda_n d_{\Sigma_n}(z_n,\partial D_n)}=\frac{d_{\Sigma_n}(q_n,\partial D_n)}{d_{\Sigma_n}(z_n,\partial D_n)}\leq 2.\end{equation}
In particular, the norms of the second fundamental forms of the surfaces $M_n$ are uniformly bounded. Also note that, by construction, $|\hat{\sigma}_n(q_n)|=1$. By \eqref{lann}, the radii of $M_n$ diverge to infinity (recall that the \emph{radius} of a compact Riemannian surface with boundary is the maximum distance of points in the surface to its boundary).

Let now $\widetilde{M}_n$ denote the translation of $M_n$ that takes the point $g_n(q_n)$ to the origin of $\R^3$, and let $\xi_n\in \S^2$ denote the Gauss map image of $M_n$ at $q_n$. After passing to a subsequence, we may assume that $(\xi_n)_n\to \xi$ as $n\to \8$, for some $\xi\in \S^2$. By construction, the norm of the second fundamental form of $\widetilde{M_n}$ is at most $2$, and it is equal to $1$ at the origin.

We use next an argument similar to the one in the proof of Theorem \ref{compa} to show that a subsequence of the surfaces $\widetilde{M}_n$ converges uniformly on compact sets to a complete minimal surface $M_{\8}$. 

First, Proposition 2.3 in \cite{RST} ensures that there exist positive constants $\delta_0,\mu$ (independent of $n$) such that, for any $n$ large enough, we can view a neighborhood of the origin in $\widetilde{M_n}$ as a graph of a function $u_n$ over a disk $D^0_n$ of radius $\delta_0$ of its tangent plane $T_0 \widetilde{M}_n= \xi_n^{\perp}$, and such that $||u_n||_{C^2(D^0_n)} \leq \mu$. Since the vectors $\xi_n$ converge to $\xi$ in $\S^2$, after making if necessary $\delta_0$ (resp. $\mu$) smaller (resp. larger), and for every $n$ large enough, we have that the same properties hold with respect to the $\xi$ direction; that is:

 \begin{enumerate}
 \item[a)]
An open neighborhood of the origin in $\widetilde{M}_n$ is the graph $x_3=u_n(x_1,x_2)$ of a function $u_n$ over the Euclidean disk $\mathcal{D}_0:=D(0,\delta_0)$ of radius $\delta_0$ in $\Pi_0=\xi^{\perp}$; here $(x_1,x_2,x_3)$ are orthonormal Euclidean coordinates centered at the origin, with $\frac{\parc}{\parc x_3}=\xi$.
 \item[b)]
The $C^2$ norm of $u_n$ in $\mathcal{D}_0$ is at most $\mu$.
 \end{enumerate}

Let $H_n(x_1,x_2)$ denote the mean curvature function of $\widetilde{M}_n$ in these coordinates. Note that, by \eqref{asscur1} and the fact that the factors $\landa_n$ diverge to $\8$, the functions $H_n$ are uniformly bounded in the $C^1(\mathcal{D}_0)$ norm, and as a matter of fact they converge uniformly to zero in that norm. Also note that, since the graph of $u_n$ has mean curvature $H_n$, then $u_n$ is a solution to the linear elliptic PDE for $u$
\begin{equation}\label{linpde} a_{11}(Du_n) u_{11} + 2 a_{12}(Du_n) u_{12} + a_{22} (Du_n) u_{22} = 2 H_n (1+|Du_n|^2)^{3/2},\end{equation} where $u_{ij}$ denotes second derivatives of $u$ with respect to the variables $x_i,x_j$, and the coefficients $a_{ij}$ are smooth functions. As, by condition b) above, the functions $u_n$ are uniformly bounded in the $C^{1,\alfa}$ norm in $\mathcal{D}_0$, we conclude that all coefficients of \eqref{linpde} are bounded in the $C^{0,\alfa}(\mathcal{D}_0)$ norm. By Schauder theory, the $C^{2,\alfa}$-norms in any $D(0,\delta)\subset\subset \mathcal{D}_0$ of the functions $u_n$ are uniformly bounded.

Once here, we may repeat the last part of the proof in Theorem \ref{compa} using the Arzela-Ascoli theorem and a diagonal argument, and conclude that a subsequence of the surfaces $\widetilde{M}_n$ converges uniformly on compact sets in the $C^2$ topology to a complete minimal surface $M_{\8}$ of bounded curvature that passes through the origin (note that $M_{\8}$ is minimal since the mean curvatures of $\widetilde{M}_n$ converge by construction to zero). Moreover, the norm of the second fundamental form of $M_{\8}$ at the origin is equal to $1$.

Also, since all the surfaces $\widetilde{M}_n$ have been obtained by translations and homotheties in $\R^3$ of the original immersions $f_n:\Sigma_n\flecha \R^3$, and since all the Gauss map images of the $f_n$ omit an open spherical disk of radius $\rho$ of the north pole in $\S^2$, it follows that $M_{\8}$ also omits such an open disk. By a classical result of Osserman, according to which the Gauss map image of a complete non-planar minimal surface in $\R^3$ is dense in $\S^2$, we deduce that $M_{\8}$ is a plane. This contradicts the fact that the norm of the second fundamental form of $M_{\8}$ at the origin is equal to $1$. This contradiction proves Theorem \ref{th:curv}.
\end{proof}
\begin{remark}\label{rem:curv}
It is clear from the proof that, in Theorem \ref{th:curv}, one can remove Assumption \eqref{asscur1} and ask instead that $\Sigma$ is an $\cH$-surface for some fixed, prescribed, $\cH\in C^1(\S^2)$. In that case, the constant $C$ only depends on $d,\rho$ and the $C^1$ norm of $\cH$ in $\S^2$.
\end{remark}

It is interesting to compare Theorem \ref{th:curv} with the family of catenoids $C_{\ep}$ in $\R^3$, where $\ep>0$ is the necksize. When $\ep\to 0$, the curvature of $C_{\ep}$ blows up at its \emph{waist}. Moreover, if we consider, for $d_0>0$ fixed, the piece $C_{\ep}(d_0)$ of $C_{\ep}$ of all points that are at a distance less than $d_0$ from the waist, then the Gaussian image in $\S^2$ of $C_{\ep}(d_0)$ converges as $\ep \to 0$ to $\S^2$ minus two antipodal points. This shows that condition \eqref{asscur2} cannot be avoided in Theorem \ref{th:curv}.

\subsection{Height estimates for $\cH$-graphs}\label{hees}

In Definition \ref{uhai}, $\parc \Sigma$ is not necessarily bounded, and $\Sigma$ is not compact in general.

\begin{defi}\label{uhai}
Let $\cH\in C^1(\S^2)$, and choose some $v\in \S^2$. We will say that there exists a \emph{uniform height estimate for $\cH$-graphs in the $v$-direction} if there exists a constant $C=C(\cH,v)>0$ such that the following assertion is true:

For any graph $\Sigma$ in $\R^3$ of prescribed mean curvature $\cH$ oriented towards $v$ (i.e. $\esiz \eta,v\esde>0$ on $\Sigma$ where $\eta$ is the unit normal of $\Sigma$), and with $\parc \Sigma$ contained in the plane $\Pi=v^{\perp}$, it holds that the height of any $p\in \Sigma$ over $\Pi$ is at most $C$. \end{defi}

Clearly, minimal graphs in $\R^3$ do not have a uniform height estimate (e.g., half-catenoids are counterexamples). If $\cH$ is a positive constant, Meeks showed in \cite{Me} that $\cH$-graphs admit uniform height estimates. However, for a general $\cH\in C^1(\S^2)$ the situation is more complicated. For instance, there exist complete, strictly convex, rotational $\cH$-graphs converging to a cylinder for adequate rotationally symmetric positive functions $\cH\in C^1(\S^2)$; see \cite{BGM}. The existence of such graphs shows that there are no uniform height estimates for arbitrary choices of $\cH\in C^1(\S^2)$, $\cH>0$. We should also point out that Meeks' proof uses that the CMC equation is invariant by reflections with respect to tilted Euclidean planes, and this is not the case anymore for a general $\cH\in C^1(\S^2)$, not even in the rotationally symmetric case. Thus, our approach to provide uniform height estimates for $\cH$-graphs relies on different ideas.


Let us fix some notation. In the next theorem we will assume after choosing new coordinates $(x_1,x_2,x_3)$ that $v=e_3$. We will denote $\S_+^2 =\S^2 \cap \{x_3> 0\}$,  $S^1 =\S^2\cap \{x_3=0\}$, and $\overline{\S_+^2}=\S_+^2\cup S^1$.

Let $\cH\in C^1(\overline{\S_+^2})$, $\cH>0$. By an \emph{$\cH$-hemisphere} in the $e_3$-direction we will mean a compact, strictly convex $\cH$-surface $\Sigma_{\cH}$ with boundary, such that ${\rm int}(\Sigma_{\cH})$ is an upwards-oriented graph $x_3=u(x_1,x_2)$ over a $C^2$ regular convex disk in $\R^2$, and whose Gauss map image is $\eta(\Sigma_{\cH})=\overline{\S_+^2}$.


Given $\cH\in C^1(\overline{\S_+^2})$, recall that a necessary and sufficient condition for the existence of a closed curve $\gamma\subset \R^2$ such that the cylinder $\gamma \times \R\subset \R^3$ is an $\cH$-surface (see Corollary \ref{difci}) is that $\cH(\xi) \neq 0$ for every $\xi \in S^1$, and 
 \begin{equation}\label{conci}\int_{S^1} \frac{\xi}{\cH(\xi)} d\xi =0.
  \end{equation}
In that case, $\gamma$ is strictly convex and bounds a compact domain in $\R^2$, that we will denote by $\Omega_{\cH}$.

\begin{teo}\label{th:hees}
Given $\cH\in C^1(\overline{\S_+^2})$, $\cH>0$, \emph{any} of the following conditions on $\cH$ imply that there exists a uniform height estimate for $\cH$-graphs in the $e_3$-direction:
\begin{enumerate}
\item
Condition \eqref{conci} does \emph{not} hold.
 \item
Condition \eqref{conci} holds, and there exists a graph $\Sigma_0$ in the $e_3$-direction, oriented towards $e_3$, over a domain $\Omega\subset \R^2$ that contains $\Omega_{\cH}$, and with the property that $H_{\Sigma_0}(p)>\cH(\eta(p))$ for all $p\in \Sigma_0\cap (\Omega_{\cH}\times \R)$.
\item
There exists an $\cH$-hemisphere in the $e_3$-direction.
 \item
There is some $\cH^*\in C^1(\overline{\S_+^2})$, with $\cH=\cH^*$ in $S^1$ and $\cH^*>\cH$ in $\S_+^2$, for which there exists an $\cH^*$-hemisphere in the $e_3$-direction.
 \item
${\rm max} \, \cH  <  2\, {\rm min} \, \cH|_{S^1}.$
\end{enumerate}

\end{teo}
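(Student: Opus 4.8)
I would argue by contradiction via a blow-down that collapses all five hypotheses into one statement: \emph{there is no complete $\cH$-surface $\Sigma_{\8}$ with $\esiz\eta_{\8},e_3\esde\geq 0$ which is contained in the halfspace $\{x_3\geq 0\}$ and touches $\Pi:=\{x_3=0\}$}. For an $\cH$-graph $\Sigma=\{x_3=u\}$ oriented towards $e_3$ with $\parc\Sigma\subset\Pi$, equation \eqref{eqH} and $\cH>0$ forbid an interior maximum of $u$; and if $u$ were large and positive somewhere, blowing up at a near-highest point (whose distance to $\parc\Sigma$ is large) yields, by Remark \ref{rem:curv} and Theorem \ref{compa}, a complete $\cH$-surface in $\{x_3\leq 0\}$ tangent to $\Pi$ at the origin, where $\Delta x_3=2\cH(e_3)>0$ contradicts the maximum of $x_3$; hence the height over $\Pi$ of points \emph{above} $\Pi$ is uniformly bounded, and the estimate can fail only through a sequence of such graphs dipping a distance $T_k\to\8$ below $\Pi$, or through a complete $\cH$-graph (necessarily of unbounded height). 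In the first case, choosing deepest points $p_k$: the Gauss image lies in $\S_+^2$, hence omits a spherical cap of radius $\pi/2$, so Remark \ref{rem:curv} bounds $|\sigma_k|$ near $p_k$ uniformly, while Lemma \ref{lemi} shows that below $\{x_3=-2/H_0\}$ (with $H_0:=\min_{\overline{\S_+^2}}\cH>0$) $\Sigma_k$ splits into compact pieces with horizontal slices of diameter $\leq 2/H_0$; translating $p_k$ to the origin and applying Theorem \ref{compa} produces $\Sigma_{\8}$, which moreover is tangent to $\Pi$ at the origin (so $\eta_{\8}(0)=e_3$) and, in this case, has slices of diameter $\leq 2/H_0$. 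In the second case a vertical translate of the complete graph, lowered until it touches $\Pi$ at its vertex, is already such a $\Sigma_{\8}$.

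\textbf{Reducing the list.} (5)$\Rightarrow$(4): with $\cH^*:=\cH+\varepsilon\,\esiz x,e_3\esde$ and $\varepsilon>0$ small one has $\cH^*=\cH$ on $S^1$, $\cH^*>\cH$ on $\S_+^2$, and $\max\cH^*<2\min\cH^*|_{S^1}$, and this last inequality supplies the a priori estimates needed to solve, by a continuity/Perron argument of the type behind Theorem \ref{tgg}, the Dirichlet problem producing an $\cH^*$-hemisphere. (4)$\Rightarrow$(2): since $\cH^*=\cH$ on $S^1$, the $\cH^*$-hemisphere agrees near its boundary with the $\cH$-cylinder, hence up to a horizontal translation is a graph over $\Omega_{\cH}$ whose mean curvature equals $\cH^*(\eta)>\cH(\eta)$ on $\Omega_{\cH}\times\R$, i.e. a surface $\Sigma_0$ as in (2). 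So it remains to exclude $\Sigma_{\8}$ under (1), (2) and (3).

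\textbf{Conditions (2) and (3): sliding barriers.} Both rest on the tangency principle (Lemma \ref{tan}). Under (3): place the $\cH$-hemisphere $\Sigma_{\cH}$ with its point of horizontal tangent plane directly below the origin and far below $\Sigma_{\8}$, and translate it vertically upward; since that is the highest point of $\Sigma_{\cH}$, the hemisphere remains in $\{x_3<0\}$ (hence disjoint from $\Sigma_{\8}$) until that point reaches the origin, where $\Sigma_{\cH}\subset\{x_3\leq 0\}$ meets $\Sigma_{\8}\subset\{x_3\geq 0\}$ tangentially with common normal $e_3$, lying on one side of it. By Lemma \ref{tan} and connectedness $\Sigma_{\8}$ then contains $\mathrm{int}(\Sigma_{\cH})$, hence---being complete---extends $\Sigma_{\cH}$ across its vertical boundary, where strict convexity drives the Gauss image of the continuation into $\{\esiz\eta,e_3\esde<0\}$, contradicting $\esiz\eta_{\8},e_3\esde\geq 0$. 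Under (2): the thinness and completeness of $\Sigma_{\8}$, together with the absence of $\cH$-graphs over large domains, force $\Sigma_{\8}$ to be a graph over a bounded domain on which $u_{\8}\to\8$ at the boundary; since (2) assumes \eqref{conci} and the $\cH$-cylinder is unique up to translation, that domain is a translate of $\Omega_{\cH}$. Translating so that it equals $\Omega_{\cH}$, one raises $\Sigma_0$ from far below until it first meets $\Sigma_{\8}$; the contact is an interior tangential one over $\Omega_{\cH}$ (because $u_{\8}\to\8$ on $\parc\Omega_{\cH}$), and there $H_{\Sigma_0}>\cH(\eta)=H_{\Sigma_{\8}}$, which is impossible for a surface touched from below.

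\textbf{Condition (1), and the main difficulty.} Here \eqref{conci} fails, so there is no $\cH$-cylinder at all. As in case (2), $\Sigma_{\8}$ is a graph over a bounded domain $\cU$ with $u_{\8}\to\8$ on $\parc\cU$; taking vertical translates of $\Sigma_{\8}$ toward $x_3=\8$ and using its bounded curvature, a limit is a complete flat $\cH$-surface, which by Proposition \ref{cch} is the vertical cylinder over $\parc\cU$---so $\parc\cU$ is closed and, by Corollary \ref{difci}, \eqref{conci} holds, a contradiction. I expect the main obstacles to be: (i) the contact analysis in the barrier arguments, especially excluding a first contact at the vertical rim of the barrier, which genuinely uses strict convexity of the hemisphere and a boundary version of the tangency principle; and, more delicate, (ii) the structural control of $\Sigma_{\8}$---that it is a graph over a bounded domain with $u_{\8}\to\8$ at the boundary, and that its asymptotic model at infinity is a (closed, flat) $\cH$-cylinder---which is precisely what makes case (1), and the identification with $\Omega_{\cH}$ in case (2), work. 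The hemisphere-existence step behind (5)$\Rightarrow$(4) is a real but by-now standard elliptic existence result.
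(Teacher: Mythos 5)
Your blow-down strategy for items 1 and 2 is essentially the paper's: argue by contradiction, use Lemma \ref{lemi} to confine the deep components inside a vertical cylinder, use Theorem \ref{th:curv} and Theorem \ref{compa} to extract a complete limiting $\cH$-graph $\Sigma_\8$ over a bounded domain (with $\nu_\8>0$ via Corollary \ref{nueq} and the strong maximum principle), then translate to infinity to produce a closed flat $\cH$-cylinder $\alpha\times\R$ from the asymptotic boundary; item 1 follows because closedness of $\alpha$ forces \eqref{conci}, and item 2 follows by the mean curvature comparison principle after sliding the barrier $\Sigma_0$ up from below. Your direct moving-hemisphere argument for item 3 (sliding $\Sigma_{\cH}$ up until its vertex hits the lowest point of $\Sigma_\8$, then using the tangency principle and the vanishing of $\nu_\8$ on the hemisphere's rim) is also sound and is essentially the paper's Remark \ref{2item2} route packaged differently.

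The reductions for items 4 and 5 are where the proposal diverges from the paper and where there are real gaps. For item 4, the step ``since $\cH^*=\cH$ on $S^1$, the $\cH^*$-hemisphere agrees near its boundary with the $\cH$-cylinder, hence up to a horizontal translation is a graph over $\Omega_{\cH}$'' is not correct as stated: the hemisphere and the cylinder do not agree as surfaces near the equator; only one normal curvature coincides there. What one actually has, from \eqref{dit32}--\eqref{dit31} and the strict convexity of the hemisphere, is the pointwise inequality $\kappa_{\Gamma}<\kappa_{\gamma}$ between the normal curvatures of the hemisphere's projected boundary $\Gamma$ and the cylinder's cross-section $\gamma$ at equal normals. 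Turning that into ``the disk bounded by $\Gamma$ contains a translate of $\Omega_{\cH}$'' requires the Blaschke-type rolling comparison for closed strictly convex planar curves --- precisely the ``Fact'' stated and invoked in the paper's proof. Without that intermediate comparison, the containment of domains needed to apply item 2 is unjustified.

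For item 5, the reduction $(5)\Rightarrow(4)$ via a ``continuity/Perron argument of the type behind Theorem \ref{tgg}'' is a genuine gap. Theorem \ref{tgg} solves a closed Minkowski-type problem under a symmetry hypothesis (a group of isometries of $\S^n$ acting without fixed points); it is not a Dirichlet existence theorem for convex $\cH^*$-caps, and the inequality $\max\cH^*<2\min\cH^*|_{S^1}$ is nowhere shown to supply the required a priori $C^2$ and gradient estimates for a prescribed-mean-curvature Dirichlet problem. Building an $\cH^*$-hemisphere from scratch is a nontrivial elliptic existence problem which the paper deliberately sidesteps: its proof of item 5 takes $H_0:=\max\cH$, observes from the hypothesis that $\kappa_\gamma=2\cH|_{S^1}>H_0$ everywhere, so $\gamma$ rolls inside the circle of radius $1/H_0$, and then applies item 2 with the explicit barrier $\Sigma_0=$ the lower hemisphere of the round CMC sphere $\S^2(1/H_0)$. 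You should replace the Perron step by this elementary round-sphere barrier, and replace the ``agrees with the cylinder'' step in (4) by the convex-curve rolling comparison.
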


We will prove Theorem \ref{th:hees} in Section \ref{profhes}, and devote the rest of the present Section \ref{hees} to discuss the sufficient conditions described in Theorem \ref{th:hees}, and to deduce some corollaries from it. 

The first condition in Theorem \ref{th:hees} indicates that for generic, non-symmetric choices of $\cH\in C^1(\overline{\S_+^2})$, $\cH>0$, there exist uniform height estimates for $\cH$-graphs. The second condition gives, for the remaining cases of $\cH$, a very general sufficiency property for the existence of uniform height estimates for $\cH$-graphs, in terms of the existence of an adequate \emph{barrier} $\Sigma_0$. The rest of sufficient conditions in Theorem \ref{th:hees} are obtained by applying the second condition to situations where we can construct the barrier $\Sigma_0$.

One particular consequence of Theorem \ref{th:hees} of special interest is:

\begin{cor}\label{cor:esfe}
Let $\cH\in C^1(\S^2)$, $\cH>0$, and assume that there exists a strictly convex $\cH$-sphere $S_{\cH}$ in $\R^3$. Then, there exist uniform height estimates for $\cH$-graphs with respect to any direction $v\in \S^2$.
\end{cor}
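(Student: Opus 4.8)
The plan is to deduce Corollary \ref{cor:esfe} from Theorem \ref{th:hees} by reducing the existence of a uniform height estimate for $\cH$-graphs in an \emph{arbitrary} direction $v\in \S^2$ to the case already covered by that theorem. First I would fix $v\in \S^2$ and observe that, after a rotation of $\R^3$, we may assume $v=e_3$; however, the function $\cH$ is \emph{not} assumed to have any symmetry, so this rotation changes $\cH$ into $\cH\circ R$ for the corresponding $R\in SO(3)$. Since $R$ is a linear isometry of $\R^3$ and $\cH\circ R > 0$ on $\S^2$, and since by Lemma \ref{sime} the image $R^{-1}(S_{\cH})$ is a strictly convex $(\cH\circ R)$-sphere, it suffices to prove the statement for $v=e_3$ alone. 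So the real content is: if $\cH\in C^1(\S^2)$, $\cH>0$, admits a strictly convex $\cH$-sphere $S_{\cH}$, then there is a uniform height estimate for $\cH$-graphs oriented towards $e_3$ with boundary in $\{x_3=0\}$.

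Next I would produce an $\cH$-hemisphere in the $e_3$-direction out of $S_{\cH}$, so that condition (3) of Theorem \ref{th:hees} applies and the corollary follows at once. The natural candidate is the ``upper half'' of $S_{\cH}$, namely the closure of the set of points of $S_{\cH}$ whose unit normal lies in $\S_+^2=\S^2\cap\{x_3>0\}$. Since $S_{\cH}$ is a strictly convex sphere, its Gauss map $\eta:S_{\cH}\to\S^2$ is a diffeomorphism, so this upper half $\Sigma_{\cH}$ is a compact, strictly convex surface with boundary, its boundary being $\eta^{-1}(S^1)$, and $\eta(\Sigma_{\cH})=\overline{\S_+^2}$. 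Because along $\partial\Sigma_{\cH}$ the normal is horizontal and $\Sigma_{\cH}$ is strictly convex, the interior ${\rm int}(\Sigma_{\cH})$ is a graph over the convex planar domain obtained by projecting it to $e_3^\perp$; strict convexity of $S_{\cH}$ and the fact that $\eta$ restricted to $\partial\Sigma_{\cH}$ maps onto $S^1$ guarantee this projected domain is a $C^2$ regular convex disk, as required in the definition of $\cH$-hemisphere. Finally $\Sigma_{\cH}$ is an $\cH$-surface because $S_{\cH}$ is and $\Sigma_{\cH}$ carries the induced orientation (the one with $\esiz\eta,e_3\esde\ge 0$ on the interior); one must just check the orientation is the upward one, which is forced by strict convexity once we know the normal points into $\S_+^2$ on the interior.

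I would then invoke Theorem \ref{th:hees}(3) directly: the existence of the $\cH$-hemisphere $\Sigma_{\cH}$ in the $e_3$-direction yields a uniform height estimate for $\cH$-graphs in the $e_3$-direction, and undoing the rotation gives the estimate in the original direction $v$. The main obstacle, and the step deserving the most care, is the verification that the upper half of $S_{\cH}$ is genuinely a \emph{graph} over a regular convex disk with Gauss map image exactly $\overline{\S_+^2}$ — that is, checking that the ``equator'' $\eta^{-1}(S^1)$ of $S_{\cH}$ projects injectively and regularly onto the boundary of a convex domain and that no interior point has horizontal normal. This is a standard consequence of strict convexity (the support-function description of a convex body, or the fact that a strictly convex hypersurface is the boundary of a convex body and its Gauss map is a diffeomorphism), but it is the only non-formal point; everything else is bookkeeping with Lemma \ref{sime} and the definitions. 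An alternative, should one prefer to avoid delicate convexity arguments, is to instead verify condition (2) of Theorem \ref{th:hees} using $\Sigma_{\cH}$ itself (suitably translated upward) as the barrier $\Sigma_0$, but condition (3) is the cleanest route.
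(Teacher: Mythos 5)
Your proposal is correct and matches the paper's argument: the paper also takes the closed upper half $\Sigma_{\cH,v}=\{p\in S_{\cH}:\esiz\eta_S(p),v\esde\geq 0\}$ of the convex sphere, observes it is an $\cH$-hemisphere in the $v$-direction, and applies item 3 of Theorem \ref{th:hees}. One small imprecision: Lemma \ref{sime} as stated concerns isometries $\Phi$ with $\cH\circ\Phi=\cH$, so for an arbitrary rotation $R$ you should instead invoke the elementary (and implicitly used) fact that $R^{-1}(S_{\cH})$ is a strictly convex $(\cH\circ R)$-sphere, which is proved exactly as Lemma \ref{sime} but without the symmetry assumption.
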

\begin{proof}
Simply observe that $\Sigma_{\cH,v}:=\{p\in S_{\cH} : \esiz \eta_S(p),v\esde \geq 0\}$ is an $\cH$-hemisphere in the $v$-direction, and apply item 3 of Theorem \ref{th:hees}; here, $\eta_S$ is the unit normal of $S_{\cH}$.
\end{proof}

The next general result is immediate from item 5 of Theorem \ref{th:hees}.

\begin{cor}
Let $\cH\in C^1(\overline{\S_+^2})$, $\cH>0$. Choose any $H_0> {\rm max} \, \cH - 2 {\rm min} \, \cH |_{S^1}$. Then there exists a uniform height estimate for graphs of prescribed mean curvature $\cH+H_0$ in the $e_3$-direction.
\end{cor}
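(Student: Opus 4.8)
The plan is simply to apply item 5 of Theorem~\ref{th:hees} to the prescribed mean curvature function $\widetilde{\cH}:=\cH+H_0$. The first observation is that a graph in $\R^3$ of prescribed mean curvature $\cH+H_0$, oriented towards $e_3$ and with boundary contained in the plane $e_3^{\perp}$, is exactly a $\widetilde{\cH}$-graph in the $e_3$-direction in the sense of Definition~\ref{uhai}. Hence a uniform height estimate for $\widetilde{\cH}$-graphs in the $e_3$-direction, which is what Theorem~\ref{th:hees} delivers once its hypotheses are verified, is precisely the uniform height estimate asserted in the statement.

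It therefore only remains to check that $\widetilde{\cH}$ satisfies the requirements of item 5 of Theorem~\ref{th:hees}. Since $H_0$ is a constant, $\widetilde{\cH}=\cH+H_0\in C^1(\overline{\S_+^2})$ and $\widetilde{\cH}>0$. Moreover, again because $H_0$ is constant, ${\rm max}\,\widetilde{\cH}={\rm max}\,\cH+H_0$ and ${\rm min}\,\widetilde{\cH}|_{S^1}={\rm min}\,\cH|_{S^1}+H_0$, so the inequality ${\rm max}\,\widetilde{\cH}<2\,{\rm min}\,\widetilde{\cH}|_{S^1}$ required by item 5 is equivalent to ${\rm max}\,\cH+H_0<2\,{\rm min}\,\cH|_{S^1}+2H_0$, i.e. to $H_0>{\rm max}\,\cH-2\,{\rm min}\,\cH|_{S^1}$, which is precisely the hypothesis on $H_0$. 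Thus item 5 of Theorem~\ref{th:hees} applies to $\widetilde{\cH}$ and yields the claim.

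There is essentially no obstacle here, since the corollary is a one-line translation of Theorem~\ref{th:hees}(5), whose substance (through item 2 of that theorem and the curvature estimate of Theorem~\ref{th:curv}) has already been established; the only thing I would be careful to record is that the shifted function $\widetilde{\cH}=\cH+H_0$ must stay positive in order for Theorem~\ref{th:hees} to be applicable, which holds automatically when $H_0\geq 0$ — and the relevant regime of the corollary is precisely that of large $H_0$.
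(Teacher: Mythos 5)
Your proposal is correct and takes exactly the approach the paper intends: the paper states the corollary is "immediate from item 5 of Theorem~\ref{th:hees}," and your verification that the hypothesis $H_0>\max\cH-2\min\cH|_{S^1}$ is equivalent to $\max\widetilde{\cH}<2\min\widetilde{\cH}|_{S^1}$ for $\widetilde{\cH}=\cH+H_0$ is the whole content. Your closing caveat about the positivity of $\cH+H_0$ (which, as you note, is not automatic when the hypothesis permits a negative $H_0$) is a legitimate subtlety that the paper's terse statement leaves implicit, since item~5 is formulated only for positive prescribed functions.
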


If we impose some additional symmetry to the function $\cH$, we can obtain from Theorem \ref{th:hees} more definite sufficient conditions for the existence of a height estimate. For instance:

\begin{cor}
Let $\cH\in C^2(\overline{\S_+^2})$, $\cH>0$, be rotationally symmetric, i.e. $\cH(x)= \mathfrak{h}(\esiz x,e_3\esde)$ for some $\mathfrak{h}\in C^2([0,1])$. Assume that $\mathfrak{h}'(0)\leq 0$.

Then, there is a uniform height estimate for $\cH$-graphs in the $e_3$-direction.
\end{cor}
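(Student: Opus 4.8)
The plan is to deduce this corollary directly from Theorem \ref{th:hees} by showing that the hypothesis $\mathfrak{h}'(0)\leq 0$ forces one of the sufficient conditions listed there to hold. First I would observe that, since $\cH$ is rotationally symmetric, $\cH$ restricted to the equator $S^1=\S^2\cap\{x_3=0\}$ is the constant $\mathfrak{h}(0)>0$. Hence the integral $\int_{S^1}\xi/\cH(\xi)\,d\xi=(1/\mathfrak{h}(0))\int_{S^1}\xi\,d\xi=0$ automatically, so condition \eqref{conci} holds and we are in the situation where conditions 2, 3, 4 or 5 of Theorem \ref{th:hees} must be invoked. The associated cylinder $\gamma\times\R$ is the round cylinder of radius $1/\mathfrak{h}(0)$ (that is, $n\kappa_\gamma=\cH$ with $n=1$ gives $\kappa_\gamma=\mathfrak{h}(0)$), so $\Omega_{\cH}$ is a round disk of radius $1/\mathfrak{h}(0)$.

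The key step is to produce a rotationally symmetric $\cH$-hemisphere in the $e_3$-direction, which would let us apply item 3 of Theorem \ref{th:hees}. Equivalently, I want to construct a rotational, strictly convex $\cH$-graph capping off the cylinder, whose Gauss map image is exactly $\overline{\S_+^2}$. Writing the profile curve of a rotational graph $x_3=f(r)$ and imposing the prescribed mean curvature equation, one gets an ODE for $f$ (or better, for the angle function $\theta$ that the normal makes with $e_3$), of the standard form studied for rotational $\cH$-surfaces in \cite{BGM}. I would parametrize by the angle $\theta\in[0,\pi/2]$ that the unit normal makes with $e_3$, so that the sought hemisphere corresponds to $\theta$ running over the full range $[0,\pi/2]$: at $\theta=0$ one is at the top (vertical normal), and as $\theta\to\pi/2$ the normal becomes horizontal, i.e. the boundary lies in a plane $\{x_3=\mathrm{const}\}$ and meets it orthogonally. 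The hypothesis $\mathfrak{h}'(0)\leq 0$ is precisely what controls the behavior of this ODE near $\theta=\pi/2$: it guarantees that the solution does not develop a vertical tangency before the normal has swept the entire upper hemisphere, so that the radius function $r(\theta)$ stays positive and the surface closes up as a compact strictly convex cap rather than spiraling around the cylinder. (This is the rotationally symmetric analogue of the dichotomy described in the introduction, where for $\mathfrak{h}'(0)>0$ one can instead get convex graphs asymptotic to a cylinder, which is exactly the case with no height estimate.)

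The main obstacle is this ODE analysis: one must show that under $\mathfrak{h}'(0)\leq 0$ the profile curve, started at the axis with a horizontal tangent, reaches a point where its tangent is vertical (normal horizontal) while remaining a strictly convex graph throughout, i.e. $\theta$ is monotone and $r$ stays bounded away from $0$ and finite. I would do this by a comparison argument: when $\mathfrak{h}'(0)\le 0$, near the equatorial value the prescribed curvature along the profile is bounded below by the constant $\mathfrak{h}(0)$ in the relevant range of normal directions, so one can compare with the corresponding CMC surface (a piece of a round sphere of radius $1/\mathfrak{h}(0)$), whose profile does reach a vertical tangent; a standard ODE comparison then forces the $\cH$-profile to do likewise, and strict convexity follows since $\kappa_\gamma=\mathfrak{h}(\cos\theta)>0$ throughout. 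Once the $\cH$-hemisphere is constructed, the corollary is immediate from item 3 of Theorem \ref{th:hees}. As an alternative that sidesteps the delicate closing-up argument, I would note that $\mathfrak{h}'(0)\le 0$ also makes it easy to verify condition 2 directly, by using a large piece of the round sphere of radius $1/\mathfrak{h}(0)$ (which has $H\equiv\mathfrak{h}(0)\ge\mathfrak{h}(\cos\theta)=\cH(\eta)$ on the part where $x_3\ge 0$, with the inequality coming from $\mathfrak{h}'\le 0$ near the equator) as the barrier $\Sigma_0$; I would present whichever of these two routes turns out to require the least technical fuss.
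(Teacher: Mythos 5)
Your overall strategy (reduce to item 3 or item 2 of Theorem \ref{th:hees}) is in the right spirit, but both concrete routes you propose have real gaps, and the paper's actual proof works quite differently. The paper splits into two cases. If $\mathfrak{h}'(0)=0$, then $\mathfrak{h}(|t|)$ is a $C^2$ even extension of $\mathfrak{h}$ to $[-1,1]$, so $\cH$ extends to a positive $C^2$ function on $\S^2$ with $\cH(x)=\cH(-x)$; Theorem \ref{tgg} (Guan--Guan) then produces a strictly convex $\cH$-sphere, whose upper half is the required $\cH$-hemisphere (Corollary \ref{cor:esfe}, which is item~3). If $\mathfrak{h}'(0)<0$, the even extension is only $C^1$ and Guan--Guan does not apply directly; instead one picks an even $h^*\in C^2([-1,1])$ with $h^*(0)=\mathfrak{h}(0)$ and $h^*>\mathfrak{h}$ on $(0,1]$ (possible precisely because $\mathfrak{h}'(0)<0$ forces $\mathfrak{h}$ to dip below $\mathfrak{h}(0)$ near $0$ faster than any even $C^2$ function can), applies Guan--Guan to $\cH^*$, and invokes item~4. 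This two-case structure and the use of the Guan--Guan theorem are exactly what your plan is missing.

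Concretely, there are two errors in your sketch. First, in the ODE comparison you claim that $\mathfrak{h}'(0)\le 0$ makes the prescribed curvature near the equatorial normal direction ``bounded below by $\mathfrak{h}(0)$''; the inequality goes the other way -- $\mathfrak{h}'(0)\le 0$ gives $\mathfrak{h}(t)\le\mathfrak{h}(0)+o(t)$, so near the equator $\cH\le\mathfrak{h}(0)$ -- and in the case $\mathfrak{h}'(0)<0$ one cannot expect an $\cH$-hemisphere to exist by this route without substantial additional ODE work (which the paper avoids altogether by switching to a dominating $\cH^*$). Also note $\kappa_\gamma\ne\mathfrak{h}(\cos\theta)$; $\mathfrak{h}(\cos\theta)$ is the \emph{mean} of the two principal curvatures, and strict convexity needs both to be positive. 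Second, your alternative via item~2 with the round sphere of mean curvature $\mathfrak{h}(0)$ as barrier does not work: the hypothesis $\mathfrak{h}'(0)\le 0$ is a condition only at $t=0$ and does not give $\mathfrak{h}(t)\le\mathfrak{h}(0)$ on all of $[0,1]$ (e.g.\ $\mathfrak{h}(t)=1-t+3t^2$), so the portion of that sphere lying over $\Omega_{\cH}$, where the normal is close to $e_3$, may have mean curvature strictly less than $\cH(\eta)=\mathfrak{h}(\langle\eta,e_3\rangle)$, and the comparison fails.
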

\begin{proof}
Assume first that $\mathfrak{h}'(0)=0$. Then, we can extend $\mathfrak{h}$ to a positive, even, $C^2$ function on $[-1,1]$. Hence, $\cH$ can also be extended to a positive $C^2$ function on $\S^2$, also denoted $\cH$, so that $\cH(x)=\cH(-x)$ for all $x\in \S^2$. By Theorem \ref{tgg}, there exists a strictly convex $\cH$-sphere $S_{\cH}$. Thus, the result follows from Corollary \ref{cor:esfe}. 

Assume now that $\mathfrak{h}'(0)<0$. Then, we can construct a function $h^*\in C^2([-1,1])$, with $h^*(t)=h^*(-t)$ for all $t$, and such that $h^*(0)=\mathfrak{h}(0)$ and $h^*(t)>\mathfrak{h}(t)$ for all $t\in (0,1]$. Let $\cH^*\in C^2(\S^2)$ be defined by $\cH^*(x):= h^*(\esiz x,e_3\esde)$, and note that $\cH^*(x)=\cH^*(-x)>0$ for all $x\in \S^2$. Arguing as above, there exists an $\cH^*$-hemisphere in the $e_3$-direction. Since it is clear that $\cH^*=\cH$ in $S^1$ and $\cH^*>\cH$ in $\S_+^2$, we concluded the desired result by the fourth sufficient condition in Theorem \ref{th:hees}.
\end{proof}
For the case in which $\cH$ is invariant under the symmetry with respect to a geodesic of $\S^2$, we have an estimate for compact embedded $\cH$-surfaces, not necessarily graphs.

\begin{cor}\label{cor:hees}
Let $\cH\in C^1(\S^2)$, $\cH>0$, satisfy:
 \begin{enumerate}
 \item
 $\cH(x_1,x_2,x_3)=\cH(x_1,x_2,-x_3)$ for all $x=(x_1,x_2,x_3)\in \S^2$.
  \item
There exists a uniform height estimate for $\cH$-graphs in the directions of $e_3$ and $-e_3$.
 \end{enumerate}
Then, there is some constant $C=C(\cH)>0$ such that the following assertion is true: any compact embedded $\cH$-surface $\Sigma$ in $\R^3$, with $\parc \Sigma$ contained in the plane $x_3=0$, lies in the slab $|x_3|\leq C$ of $\R^3$.
\end{cor}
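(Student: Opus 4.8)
\textbf{Proof plan for Corollary \ref{cor:hees}.}

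The plan is to use the symmetry hypothesis (1) together with Alexandrov reflection in the vertical direction to show that a compact embedded $\cH$-surface $\Sigma$ with $\parc\Sigma\subset\{x_3=0\}$ splits, after the reflection process halts, into pieces that are vertical graphs with planar boundary; then conditions (2) bound those pieces. Concretely, write $T$ for the reflection $(x_1,x_2,x_3)\mapsto(x_1,x_2,-x_3)$; by hypothesis (1) and Lemma \ref{sime}, $T$ maps $\cH$-surfaces to $\cH$-surfaces. Let $\Sigma^+=\Sigma\cap\{x_3\geq 0\}$ and $\Sigma^-=\Sigma\cap\{x_3\leq 0\}$; since $\parc\Sigma\subset\{x_3=0\}$ and $\Sigma$ is compact and embedded, both are compact surfaces with boundary in $\{x_3=0\}$, and it suffices to produce a constant $C=C(\cH)$ bounding $\max_{\Sigma^+} x_3$ (the bound for $-\min_{\Sigma^-}x_3$ being identical by applying $T$).

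First I would run the Alexandrov moving-planes argument from above on $\Sigma^+$ using the horizontal planes $P_t=\{x_3=t\}$, $t$ decreasing from $\max x_3$. As long as the reflected part $T_t(\Sigma^+\cap\{x_3\geq t\})$ (where $T_t$ is reflection across $P_t$) stays strictly on one side of $\Sigma^+$ and meets it only tangentially, Lemma \ref{tan} forbids interior or boundary tangency, so the process continues. It stops at a first height $t_0\geq 0$ where one of the usual two failures occurs: either the reflected cap becomes internally tangent to $\Sigma^+$ at an interior point, or the reflection plane $P_{t_0}$ becomes orthogonal to $\Sigma^+$ along $\parc\Sigma^+$. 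In the interior-tangency case, Lemma \ref{tan}(1) gives $T_{t_0}(\Sigma^+\cap\{x_3\geq t_0\})=\Sigma^+\cap\{x_3\geq t_0\}$, i.e.\ $\Sigma^+$ is symmetric about $P_{t_0}$; but then the reflection of the cap across $P_{t_0}$ is a piece of $\Sigma^+$ lying in $\{x_3\leq t_0\}$, which forces $t_0\leq$ (a fixed multiple of) the diameter of $\Sigma$ in directions along $\parc\Sigma$ — and here is where one must be careful, because a priori that diameter is not controlled. So the cleaner route is: run the reflection only until the plane reaches $\{x_3=0\}$; if it has not stopped before then, symmetry/monotonicity of the caps shows $\Sigma^+\cap\{x_3>0\}$ is a vertical graph over a domain $\Omega\subset\{x_3=0\}$, oriented towards $e_3$ (the normal points upward on the topmost cap and the graph property propagates down by transversality of $P_t\cap\Sigma^+$), with $\parc\Sigma^+\subset\{x_3=0\}$. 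Hypothesis (2) then bounds its height by $C(\cH)$. If instead the process stops at some $t_0\in(0,\max x_3)$ by interior tangency, then $\Sigma^+$ is a bigraph symmetric about $P_{t_0}$, each of the two graphical halves is a vertical $\cH$-graph (one oriented towards $e_3$, one towards $-e_3$) whose boundary lies in $P_{t_0}$; applying hypothesis (2) to the upper half (after translating $P_{t_0}$ to the origin, using translation invariance of the $\cH$-equation) bounds $\max x_3 - t_0\leq C(\cH)$, and since the lower graphical half reaches down to $\{x_3=0\}$ we get $t_0\leq C(\cH)$ as well by applying hypothesis (2) in the $-e_3$ direction to that half, so $\max x_3\leq 2C(\cH)$.

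The main obstacle is the bookkeeping in the Alexandrov argument: one must verify that at every stage of the reflection the relevant cap is genuinely a graph (so that "one side of" in Lemma \ref{tan} is meaningful) and that the conormal condition at $\parc\Sigma^+\subset\{x_3=0\}$ is automatically satisfied because $\parc\Sigma^+$ is planar, so the reflected boundary lies in the same plane and Lemma \ref{tan}(2) applies to rule out boundary tangency for $t>0$. A second subtlety is orientation: the hypothesis (2) is stated for graphs oriented \emph{towards} $v$, so when we get a bigraph about $P_{t_0}$ we must check that the upper half is oriented towards $e_3$ and the lower half towards $-e_3$ (immediate from embeddedness and the outward/inward normal of $\Sigma$), which is exactly why we need the height estimate in \emph{both} the $e_3$ and $-e_3$ directions in condition (2). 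Once these points are checked, taking $C(\cH)=2\max\{C_{e_3}(\cH),C_{-e_3}(\cH)\}$, where $C_{\pm e_3}$ are the constants from condition (2), yields the slab bound $|x_3|\leq C$ on all of $\Sigma$.
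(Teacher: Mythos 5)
Your approach matches the paper's (very terse) proof: Alexandrov reflection with horizontal planes, enabled by hypothesis (1) via Lemma \ref{sime}, reducing to the graph height estimate of hypothesis (2). The final constant $2\max\{C_{e_3},C_{-e_3}\}$ is also correct. However, one step in your case analysis is not quite right as written. In your interior--tangency case, the reflected top cap $T_{t_0}(\Sigma^+\cap\{x_3\geq t_0\})$ has boundary only in $P_{t_0}$ (since $\partial\Sigma^+\subset\{x_3=0\}$ and $t_0>0$), and descends only to $\{x_3=2t_0-\max x_3\}$; so your assertion that ``the lower graphical half reaches down to $\{x_3=0\}$'' is unjustified, and the lower half is \emph{not} an $\cH$-graph with boundary in $\{x_3=0\}$.

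The cleaner route avoids the problematic case entirely. Assume $\Sigma$ connected with $\partial\Sigma\neq\emptyset$ (this is needed for the statement to hold at all: a closed $\cH$-surface translated far from $\{x_3=0\}$ would satisfy the hypotheses vacuously but violate the conclusion). If the reflection halted by interior or boundary tangency at some $t_0>\tfrac12\max x_3$, the reflected cap would lie entirely in $\{x_3>0\}$, away from $\partial\Sigma$; the tangency principle and unique continuation would then give $T_{t_0}(\Sigma\cap\{x_3\geq t_0\})\subset\Sigma$, so $\Sigma$ would contain the closed surface $\Sigma\cap\{x_3\geq t_0\}\cup T_{t_0}(\Sigma\cap\{x_3\geq t_0\})$, forcing $\partial\Sigma=\emptyset$, a contradiction. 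Hence the moving plane works down to $t=\tfrac12\max x_3$, at which point $\Sigma\cap\{x_3\geq\tfrac12\max x_3\}$ is a vertical $\cH$-graph with boundary in $P_{\max x_3/2}$ of height $\tfrac12\max x_3$. After translating the boundary plane to $\{x_3=0\}$, hypothesis (2) bounds this by $C$, giving $\max x_3\leq 2C$; the same applied to $-e_3$ bounds $-\min x_3\leq 2C$. This yields the slab bound directly, and makes the second hypothesis's requirement of both directions appear for the transparent orientation reason you already identified.
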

\begin{proof}
It is an immediate consequence of the Alexandrov reflection principle applied to the family of horizontal planes in $\R^3$. Note that, due to the symmetry condition imposed on $\cH$, we can apply this reflection method in that particular direction; see Lemma \ref{sime}.
%
\end{proof}

\subsection{Proof of Theorem \ref{th:hees}}\label{profhes}

We start by proving the first two items. Arguing by contradiction, assume that $\cH\in C^1(\overline{\S_+^2})$, $\cH>0$, is a function for which there is no uniform height estimate for $\cH$-graphs in the $e_3$-direction. 
So, there exists a sequence $(\Sigma_n)_n$ of $\cH$-graphs with respect to the $e_3$-direction, oriented towards $e_3$, and with boundary $\parc \Sigma$ contained in $\Pi=e_3^{\perp}=\{x_3=0\}$, and points $p_n\in \Sigma_n$ such that the height of $p_n$ over $\Pi$ is greater than $n$. Note that $\Sigma_n\subset \{x_3\leq 0\}$.

Take now $H_0\in (0,{\rm min}\, \cH)$, and denote $\Sigma_n^*:=\Sigma_n\cap \{|x_3| \geq 4/H_0\}$. By hypothesis, $p_n\in \Sigma_n^*$ for $n$ large enough. By Lemma \ref{lemi}, the connected component $\Sigma_n^0$ of $\Sigma_n^*$ that contains $p_n$ is compact, and contained inside a vertical solid cylinder of radius $2/H_0$. 

Let $q_n\in \Sigma_n^0$ be a point of maximum height of $\Sigma_n^0$, and let $\Sigma_n^1:=\Sigma_n^0-q_n$ denote the translation of $\Sigma_n^0$ that takes $q_n$ to the origin in $\R^3$. By Theorem \ref{th:curv} and Remark \ref{rem:curv}, the norms of the second fundamental form of the graphs $\Sigma_n^1$ are uniformly bounded by some positive constant $C>0$ that only depends on $H_0$ and $||\cH||_{C^1(\S^2)}$, and not on $n$. Moreover, the distances in $\R^3$ of the origin to $\parc \Sigma_n^1$ diverge to $\8$. By Theorem \ref{compa}, we deduce that, up to a subsequence, there are smooth compact sets $K_n$ of the graphs $\Sigma_n^1$, all of them containing the origin and with horizontal tangent plane at it, and that converge uniformly in the $C^2$ topology to a complete $\cH$-surface $\Sigma_{\8}$ of bounded curvature that passes through the origin.

Let us define next $\nu_{\8}:=\esiz \eta_{\8},e_3\esde$, where $\eta_{\8}$ is the unit normal of $\Sigma_{\8}$. Note that $\nu_{\8}({\bf 0})=1$. As all graphs $\Sigma_n^1$ are oriented towards $e_3$, we deduce that $\nu_{\8}\geq 0$ on $\Sigma_{\8}$.

Furthermore, it follows directly from Corollary \ref{nueq} (to be proved in Section \ref{sec:stable}) that $\nu_{\8}$ is a solution to a linear elliptic equation on $\Sigma_{\8}$ of the  form 
 \begin{equation}\label{saei}
 \Delta \nu_{\8} + \esiz X,\nabla \nu_{\8}\esde + q \nu_{\8}=0,
 \end{equation}
where $\Delta,\nabla$ denote the Laplacian and gradient operators on $\Sigma_{\8}$, $X\in \X(\Sigma_{\8})$, and $q\in C^2(\Sigma_{\8})$.  By the maximum principle for \eqref{saei}, and the condition $\nu_{\8}\geq 0$, we conclude that either $\nu_{\8} \equiv 0$ on $\Sigma_{\8}$ (which cannot happen since $\nu_{\8}({\bf 0})=1$), or $\nu_{\8}>0$ on $\Sigma_{\8}$. Therefore, $\Sigma_{\8}$ is a local vertical graph, i.e. for every $p\in \Sigma_{\8}$ it holds that $T_p \Sigma_{\8}$ is not a vertical plane in $\R^3$.

Once here, and since $\Sigma_{\8}$ is a limit of compact pieces of the graphs $\Sigma_n^1$, it is clear that $\Sigma_{\8}$ is itself a proper $\cH$-graph in $\R^3$ oriented towards $e_3$. By construction, this graph has horizontal tangent plane at the origin, it has bounded second fundamental form, and lies entirely in the closed half-space $\{x_3\geq 0\}$. Moreover, since each $\Sigma_n^0$ lies inside a vertical solid cylinder in $\R^3$ of radius $2/H_0$, we deduce that all points of $\Sigma_{\8}$ lie at a distance in $\R^3$ at most $4/H_0$ from the $x_3$-axis. 

Since $\Sigma_{\8}$ is a complete proper graph, and at a distance at most $4/H_0$ from the $x_3$-axis, it is clear that for any $q_0\in \parc \Omega$ there exists a diverging sequence of points $(a_n)_n\in \Sigma_{\8}$, whose horizontal projections converge to $q_0$, and such that $\nu_{\8}(a_n)\to 0$.

Express now $\Sigma_{\8}$ as $x_3=f(x_1,x_2)$ over an open bounded domain $\Omega\subset\R^2$. Let $L$ denote a straight line in $\R^2$ far away from $\Omega$, and let us start moving it towards $\Omega$ until it reaches a first contact point $q_0\in \parc \Omega$ with the compact set $\overline{\Omega}$.


Choose a diverging sequence $(a_n)_n$ in $\Sigma_{\8}$ whose horizontal projections converge to $q_0 \in \partial \Omega$, and consider the vertical translations $\Sigma_\8^n=\Sigma_\8-(0,0,a_n^{3})$, where $a_n^3$ denotes the third coordinate of $a_n$. Up to a subsequence we can suppose that the unit normals of $\Sigma_{\8}$ at $a_n$ converge to a fixed vector $\eta_0\in \S^2$, which is horizontal. Again, a similar compactness argument to the ones above ensures that a subsequence of the graphs $\Sigma_\8^n$ converges to a complete $\cH$-surface $\Sigma_\8^*$, that passes through $q_0\in \parc \Omega$.

Note that, by construction, $\Sigma_{\8}^*$ is contained in $\overline{\Omega}\times \R$. Moreover, it is clear from the fact that the sequence $a_n^3$ diverges to $\8$ that $(\Omega\times\R)\cap\Sigma_\8^*=\emptyset$ (observe that $\Sigma_{\8}^*$ is constructed from divergent vertical translations of compact pieces of $\Sigma^*$). This implies that $\Sigma_\8^*$ is contained in $\partial\Omega\times\R$, and that the connected component of $\Sigma_\8^*$ that contains the point $(q_0,0)$ is contained in $(\partial_{0}\Omega)\times\R$, where $\partial_0 \Omega$ denotes the connected component of $\parc \Omega$ that contains $q_0$. If we keep calling this connected component as $\Sigma_{\8}^*$, then we have $\Sigma_{\8}^*=\alfa \times \R$, where $\alfa\subset \parc_{0}\Omega$ is a regular curve in $\R^2$ that verifies equation \eqref{planmin}, since $\Sigma_{\8}^*$ has prescribed mean curvature $\cH$. Note that $\alfa$ is a closed curve, since $\parc\Omega$ is compact, and so $\Sigma_{\8}^*$ is a complete flat cylinder with vertical rulings, diffeomorphic to $\S^1\times \R$, i.e. one of the surfaces in Corollary \ref{difci}. 

Since $\alfa\cap \Omega=\emptyset$, it follows by connectedness that $\Omega$ is contained in one of the two regions of $\R^2$ separated by $\alfa$. Moreover, from the way that the point $q_0\in \parc \Omega$ was chosen, it is clear that $\Omega$ is actually contained in the \emph{inner} region bounded by $\alfa$. To see this, one should observe that the $\Sigma_{\8}^n$ are graphs over $\Omega$, that they converge uniformly on compact sets to $\Sigma_{\8}^*=\alfa\times \R$, and that $\Omega$ does not intersect neither $\alfa$ nor $L$.

We are now in the conditions to prove the first two items stated in Theorem \ref{profhes}. It is important to recall here that our argument was by contradiction.

Since $\alfa$ is a closed curve, it follows from Corollary \ref{difci} that equation \eqref{conci} holds. This proves item 1 of Theorem \ref{profhes}.

We next prove item 2. Assume that there exists a graph $\Sigma_0$ in the conditions of that item; that is, the domain $\Omega_0\subset \R^2$ over which $\Sigma_0$ is defined contains $\alfa$, and $H_{\Sigma_0}(p) > \cH(\eta(p))$ for all $p\in \Sigma_0\cap (\Omega_{\cH} \times \R)$. In particular, $\overline{\Omega}\subset\Omega_0$, where $\Omega$ is the domain of the graph $\Sigma_{\8}$, which has prescribed mean curvature $\cH$. Recall that $\Sigma_{\8}$ is contained in $\{x_3\geq 0\}$. Hence, we can move $\Sigma_0$ downwards by vertical translations so that its restriction to the compact set $\overline{\Omega}\subset \Omega_0$ is contained in $\{x_3<0\}$, and then start moving it upwards until reaching a first contact point with $\Sigma_\8$. This contradicts the mean curvature comparison principle, since $H_{\Sigma_0} > \cH\circ \eta$ on $ \Sigma_0\cap (\Omega_{\cH} \times \R)$ by hypothesis. This contradiction proves item 2 of Theorem \ref{profhes}.

\begin{remark}\label{2item2}
The statement of item 2 also holds for the case that $\Sigma_0$ is an upwards-oriented $\cH$-graph in the $e_3$-direction, defined over a domain that contains $\Omega_{\cH}$ in its interior. The only difference in the proof with this new hypothesis is that the desired contradiction is reached by using the maximum principle of $\cH$-surfaces, and not the mean curvature comparison principle. \end{remark}

We next prove item 4, as an application of item 2. Let $\cH, \cH^*$ be in the conditions of item 4, and let $\Sigma_{\cH}^*$ denote an $\cH^*$-hemisphere in the $e_3$-direction. Note that $\Sigma_{\cH}^*$ can be seen as an upwards-oriented graph $x_3=u(x_1,x_2)$ over a closed strictly convex disk $\Omega_0$ with $C^2$ regular boundary $\Gamma=\parc \Omega_0$. By item 1, we may assume that condition \eqref{conci} holds, and in particular we can consider the closed curve $\gamma$ in $\R^2$ such that $\gamma\times \R$ is an $\cH$-surface in $\R^3$ (see the comments before the statement of Theorem \ref{th:hees}). If $\kappa_1^*,\kappa_2^*$ denote the (positive) principal curvatures of $\Sigma_{\cH}^*$, then at any $p\in \Sigma_{\cH}^*$ at which the unit normal $\eta^*(p)$ is a horizontal vector $\xi\in S^1$, we have (since $\cH=\cH^*$ in $S^1$) that
\begin{equation}\label{dit32}\kappa_1^*(p)+\kappa_2^*(p)= 2\cH^*(\xi) = 2\cH(\xi) = \kappa_{\gamma}(p'),\end{equation} where $p'$ is the point of $\gamma$ with unit normal equal to $\xi$, and $\kappa_{\gamma}$ denotes the (positive) geodesic curvature of $\gamma$. This implies that for any unit vector $v\in T_p \Sigma_{\cH}^*$, the second fundamental form $\sigma^*$  of $\Sigma_{\cH}^*$ satisfies 
 \begin{equation}\label{dit31} 0<\sigma^*_p (v,v)<\kappa_{\gamma}(p').\end{equation} 
 Since $\Sigma_{\cH}^*$ is an $\cH$-hemisphere, the points in $ \parc\Sigma_{\cH}^*=\{p\in \Sigma_{\cH}^* : \esiz \eta^*(p), e_3\esde =0\}$ project regularly onto the convex curve $\Gamma$. Hence, $\gamma,\Gamma$ are two closed, strictly convex planar curves that, by \eqref{dit31}, satisfy the following condition: if $n_{\Gamma}$ and $n_{\gamma}$ denote the inner unit normals of $\Gamma, \gamma$, then:
  \begin{equation}\label{comcur}
 \kappa_{\Gamma}(p)<\kappa_{\gamma}(p') \text{ whenever $n_{\Gamma}(p)=n_{\gamma}(p')$. }
  \end{equation}
We will need at this point the following classical property of convex curves in $\R^2$, whose proof we omit:

\vspace{0.1cm}

{\it Fact: If $\Gamma,\gamma$ are two closed, strictly convex regular planar curves that satisfy condition \eqref{comcur}, then $\gamma$ is contained in the interior region bounded by some translation of $\Gamma$.}

%

\vspace{0.1cm}

It follows then from the Fact above that, up to a translation of $\Sigma_{\cH}^*$, the convex disk $\Omega_0$ of $\R^2$ over which $\Sigma_{\cH}^*$ is a graph contains $\gamma=\parc \Omega_{\cH}$ in its interior. Hence, we can conclude directly the existence of a uniform height estimate for $\cH$-graphs by applying the already proved item 2 of the theorem. This proves item 4.

The proof of item 3 is analogous to the one of item 4, using the alternative formulation of item 2 that is explained in Remark \ref{2item2}, instead of item 2 itself. We omit the details. 



We next prove item 5. First, observe again that, by item 1, we may assume that condition \eqref{conci} holds, and so, we can consider again the closed planar curve $\gamma$ for which $\gamma\times \R$ is an $\cH$-surface in $\R^3$. Denote $H_0:={\rm max} \, \cH$, and let $\S^2(1/H_0)$ be the round sphere of constant mean curvature $H_0$. From the condition on $\cH$ in item 5, we get $2\cH(\xi)>H_0$ for all $\xi\in S^1$. This implies that the geodesic curvature of $\gamma$ satisfies $\kappa_{\gamma}>H_0$ at every point. So, this means that, up to a translation, $\gamma$ is contained in the open disk $D(0,1/H_0)$ of $\R^2$. Once here, we conclude the proof by applying item 2 to the lower hemisphere $\Sigma_0$ of $\S^2(1/H_0)$.
This finishes the proof of Theorem \ref{th:hees}.

\subsection{Properly embedded $\cH$-surfaces with one end}\label{estru}

The following result is due to Meeks \cite{Me}, and will play a key role in this section. See also Lemma 1.5 of \cite{KKS}.


\begin{teo}[\textbf{Plane separation lemma}]\label{seplem}
Let $\Sigma$ be a surface with boundary in $\R^3$, diffeomorphic to the punctured closed disk $\overline{\D}-\{0\}$. Assume that $\Sigma$ is properly embedded, and that its mean curvature $H_{\Sigma}$ satisfies $H_{\Sigma}(p)\geq H_0>0$ for every $p\in \Sigma$, and for some $H_0$.

Let $P_1,P_2$ be two parallel planes in $\R^3$ at a distance greater than $2/H_0$, and let $P_+,P_-$ be the connected components of $\R^3-[P_1,P_2]$, where $[P_1,P_2]$ is the open slab between both planes. Then, all the connected components of either $\Sigma\cap P^+$ or $\Sigma\cap P^-$ are compact.
\end{teo}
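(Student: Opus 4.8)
The plan is to adapt Meeks' original argument for CMC surfaces (\cite{Me}), exploiting the fact that the only ingredient needed is the mean curvature comparison with round spheres of radius $1/H_0$, which holds here since $H_\Sigma \geq H_0 > 0$. So the whole proof is \emph{geometric}, not PDE-theoretic, and the prescribed-mean-curvature nature of $\Sigma$ plays no role beyond the bound $H_\Sigma \geq H_0$.

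First I would set up coordinates so that $P_1 = \{x_3 = 0\}$ and $P_2 = \{x_3 = \tau\}$ with $\tau > 2/H_0$, so $P_- = \{x_3 < 0\}$ and $P_+ = \{x_3 > \tau\}$. The key auxiliary observation, already available via Lemma \ref{lemi} in spirit, is a \emph{uniform diameter bound}: by Lemma \ref{lemi} applied to the graph pieces of $\Sigma$ lying above (or below) a plane, every connected component of $\Sigma \cap \{x_3 = t\}$ for $t \geq 2/H_0$ has diameter at most $2/H_0$; hence every connected component of $\Sigma \cap P_+$ lies inside a solid vertical cylinder of radius $2/H_0$. Actually Lemma \ref{lemi} as stated is for graphs, so the cleaner route is the direct sphere-rolling argument: if a component $W$ of $\Sigma \cap P_+$ were noncompact, then since $\Sigma$ is properly embedded and diffeomorphic to $\overline{\D} - \{0\}$, $W$ is an annular-type end, and being proper it must escape to infinity. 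Then I would argue that a round sphere $S$ of radius $1/H_0 < \tau/2$ can be placed tangent to $P_1$ from below inside the slab region, or more precisely: consider the ball $B$ of radius $1/H_0$ and roll it. Because $H_\Sigma \geq H_0$, the mean curvature comparison principle forbids $\Sigma$ from being locally enclosed by $B$; this is the standard one-sided barrier.

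The core dichotomy argument goes as follows. Suppose, for contradiction, that \emph{both} $\Sigma \cap P_+$ and $\Sigma \cap P_-$ have a noncompact component, call them $W_+$ and $W_-$. Since $\Sigma$ is diffeomorphic to the once-punctured disk, it has exactly one topological end, so $W_+$ and $W_-$ must both contain points arbitrarily far out along that single end; in particular the end must oscillate between the region above $P_2$ and the region below $P_1$. Crossing the slab $[P_1,P_2]$ of width $> 2/H_0$ infinitely often, while staying (by the cylinder/diameter bound) within a bounded-radius tube near the end, I would produce a configuration in which a sphere of radius $1/H_0$ can be rolled through a "channel" of $\Sigma$ and trapped in a complementary region of $\R^3 \setminus \Sigma$, then pushed to a first interior contact with $\Sigma$ — contradicting the mean curvature comparison principle exactly as in Meeks' proof and in the proof of Lemma \ref{lemi} given above. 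The one-end hypothesis is what forces this oscillation: with two ends one could have one end going up and one going down with no contradiction, which is consistent with the fact that the conclusion ("$\Sigma \cap P_+$ or $\Sigma \cap P_-$ has all components compact") genuinely uses the single-end assumption.

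The main obstacle I anticipate is making the "trapped rolling sphere" construction rigorous: one must carefully specify the solid region of $\R^3 \setminus \Sigma$ in which the sphere of radius $1/H_0$ is initially placed, verify that the annular end of $\Sigma$ together with the two planes $P_1, P_2$ bounds this region appropriately, and check that moving the sphere monotonically (say, vertically, or toward infinity along the tube) cannot avoid a first contact with $\Sigma$ — using properness and the boundedness of the tube's cross-section to prevent the sphere from escaping to infinity without touching $\Sigma$. This is precisely the delicate topological bookkeeping in Meeks' original argument, and since the excerpt explicitly says "See also Lemma 1.5 of \cite{KKS}" and defers to \cite{Me}, I would present this step at the level of a careful sketch, citing \cite{Me, KKS, AEG, EGR} for the full details of the sphere-separation mechanics, and emphasize that the only property of $\cH$-surfaces used is $H_\Sigma \geq H_0 > 0$ together with the embeddedness, properness, and one-end topology.
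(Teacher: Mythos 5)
The paper gives no proof of Theorem~\ref{seplem}; it is stated as due to Meeks~\cite{Me}, with a pointer to Lemma~1.5 of~\cite{KKS}, so there is no in-paper argument to compare against. Your outline has the right shape: the only curvature input is the barrier property of spheres of radius $1/H_0$, valid under $H_\Sigma\geq H_0>0$, combined with properness, embeddedness, and the single-end topology of $\overline{\D}\setminus\{0\}$; and you correctly observe that the one-end hypothesis is what prevents the surface from escaping upward and downward along two distinct ends, which is exactly why the paper may quote Meeks' CMC lemma verbatim in the prescribed-curvature setting.

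The concrete defect is the parenthetical appeal to a ``cylinder/diameter bound'' for the oscillating end: you yourself point out a few lines earlier that Lemma~\ref{lemi} applies only to graphs with planar boundary and so supplies no such bound here, yet you then use it to confine the end to a bounded-radius tube. A proper annular end with $H\geq H_0$ that crosses the slab at arbitrarily large distances from the origin need not remain inside any fixed vertical cylinder --- the slab crossings can drift off horizontally --- so the ``channel'' through which you want to roll the sphere of radius $1/H_0$ is not obviously present, and this is precisely the nontrivial topological bookkeeping that Meeks' and Korevaar--Kusner--Solomon's arguments are built to handle (essentially by capping $\parc\Sigma$, working with the separating surface in a large compact region, and producing a forced first contact). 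Since you explicitly defer that bookkeeping to~\cite{Me,KKS}, your proposal is a reasonable sketch of the strategy, but the one step you chose to fill in yourself --- the tube bound --- is the one that does not hold as stated.
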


In what follows, we say that a surface $\Sigma$ has \emph{finite topology} if it is diffeomorphic to a compact surface (without boundary) with a finite number of points removed. If $\Sigma$ is properly embedded, each of such removed points corresponds then to an \emph{end} of the surface. Any such end is of \emph{annular type}, that is, the surface can be seen in a neighborhood of such punctures as a proper embedding of the punctured closed disk $\overline{\D}-\{0\}$ into $\R^3$, and thus is in the conditions of Theorem \ref{seplem}.

In the next theorem one should recall that, by our analysis in Section \ref{hees}, the existence of uniform height estimates for $\cH$-graphs holds in any direction $v\in \S^2$ for generic choices of $\cH\in C^1(\S^2)$, $\cH>0$. Thus, the second hypothesis in the theorem below, although a necessary one, is relatively weak.

\begin{teo}\label{t1pe}
Let $\cH\in C^1(\S^2)$, $\cH>0$, satisfy:
 \begin{enumerate}
 \item[(a)]
$\cH$ is invariant under the reflection in $\S^2$ that fixes a geodesic $\S^2\cap v^{\perp}$, for some $v\in \S^2$.
  \item[(b)]
There exists a uniform height estimate for $\cH$-graphs in the directions of $v$ and $-v$.
 \end{enumerate}
Then, there is some constant $D=D(\cH,v)>0$ such that the following assertion is true: any properly embedded $\cH$-surface in $\R^3$ of finite topology and one end is contained in a slab of width at most $D$ between two planes parallel to $\Pi=v^{\perp}$.
%
\end{teo}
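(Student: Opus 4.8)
The plan is to argue by contradiction using the Plane separation lemma (Theorem \ref{seplem}) together with the uniform height estimate hypothesis. So suppose $\Sigma$ is a properly embedded $\cH$-surface of finite topology with exactly one end, and suppose that $\Sigma$ is \emph{not} contained in any slab parallel to $\Pi=v^{\perp}$ of bounded width. After choosing coordinates so that $v=e_3$, I would first show that $\Sigma$ must be unbounded in at least one of the two directions $\pm e_3$; say it is unbounded above, i.e. $\sup_{\Sigma} x_3=+\infty$. Since $\Sigma$ has finite topology and one end, outside a large compact set $\Sigma$ is an annular end modelled on a proper embedding of $\overline{\D}-\{0\}$, and hence is in the hypotheses of Theorem \ref{seplem}.

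The key step is to extract, from the failure of the slab bound, a sequence of points $p_n\in\Sigma$ with $x_3(p_n)\to+\infty$ at which the tangent plane becomes "vertical", i.e. $\langle \eta(p_n),e_3\rangle\to 0$; otherwise $\langle \eta,e_3\rangle$ would be bounded away from zero on the part of the end above some height, making that part a vertical graph of bounded gradient that, being compact by the plane-separation lemma, is absurd for an unbounded piece. More carefully: take $H_0\in(0,\min\cH)$, apply Theorem \ref{seplem} with two horizontal planes at distance $>2/H_0$, and note that all components of $\Sigma$ above a sufficiently high horizontal plane are compact; picking a component $\Sigma'_n$ reaching height $\to\infty$ and a highest point $q_n\in\Sigma'_n$, I translate $q_n$ to the origin and use the curvature estimate of Theorem \ref{th:curv} (in the $\cH$-surface form of Remark \ref{rem:curv}, with $d,\rho$ coming from the fact that the Gauss map near such a top point omits a neighbourhood of $-e_3$) to get a uniform bound on $|\sigma|$. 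Then the compactness theorem, Theorem \ref{compa}, produces a limit complete $\cH$-surface $\Sigma_\infty$ through the origin, contained in $\{x_3\le 0\}$ with horizontal tangent plane at $0$. Exactly as in the proof of Theorem \ref{th:hees} (Section \ref{profhes}), $\nu_\infty=\langle\eta_\infty,e_3\rangle$ satisfies the linear elliptic equation \eqref{saei} with $\nu_\infty\ge 0$ and $\nu_\infty(\mathbf 0)=1$, so by the maximum principle $\nu_\infty>0$ and $\Sigma_\infty$ is a proper $\cH$-graph oriented towards $e_3$, lying in $\{x_3\ge 0\}$.

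This $\cH$-graph $\Sigma_\infty$ is the contradiction: its boundary (if any, coming from the compact-exhaustion) can be pushed to infinity, so in the limit $\Sigma_\infty$ is a complete, boundaryless, proper $\cH$-graph in the $e_3$-direction lying in the closed half-space $\{x_3\ge 0\}$ and attaining its minimum height $0$. But hypothesis (b) gives a uniform height estimate for $\cH$-graphs in the $e_3$-direction with boundary on a horizontal plane; applying it to large compact pieces of $\Sigma_\infty$ with boundary lowered to $\{x_3=0\}$ bounds their height by $C=C(\cH,e_3)$, contradicting that $\Sigma_\infty$ (hence, for large $n$, the pieces $\Sigma'_n$) reaches arbitrarily large height. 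The symmetric argument with $-e_3$, using hypothesis (a) so that the reflection across horizontal planes preserves the $\cH$-surface class (Lemma \ref{sime}) and hypothesis (b) for the $-v$ direction, handles the case of unboundedness below. Combining, $\Sigma$ must lie in a slab $|x_3|\le D$ with $D=D(\cH,v)$ depending only on the two height-estimate constants and on $H_0$.

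The main obstacle I anticipate is the rescaling/limit step that converts "$\Sigma$ leaves every slab" into "there is a complete $\cH$-graph in a half-space attaining height $0$ in its interior" — in particular making sure that the highest-point translates $\Sigma'_n-q_n$ have interior geodesic distance to their boundary tending to infinity (needed to apply Theorems \ref{th:curv} and \ref{compa}), which uses crucially that finite topology plus one end forces the relevant components above high planes to be genuinely large, and that the plane-separation lemma keeps them trapped in a half-space below $q_n$. Handling the boundary of the exhausting compact pieces correctly when passing to the limit, and verifying that the limit graph is honestly complete and proper rather than merely a local graph, is the delicate part; everything else is a routine application of the already-established height estimate and maximum principle for \eqref{saei}.
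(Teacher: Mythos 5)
Your plan starts correctly (plane separation lemma applied to the annular end, then some way to bound height), but it misses the key bridge the paper uses and, as a result, the contradiction you aim at is not actually established.

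The central difficulty is this: after applying the plane separation lemma, the components of $\Sigma\cap\{x_3\ge d\}$ are \emph{compact embedded $\cH$-surfaces with planar boundary}, not graphs. Hypothesis~(b), the uniform height estimate, applies only to \emph{graphs}. To turn the estimate for graphs into one for these compact embedded pieces, the paper invokes Corollary~\ref{cor:hees}, which uses the Alexandrov reflection technique in the $v$-direction --- and this is exactly where hypothesis~(a) enters. You only use~(a) at the end as a trivial ``symmetry of the two directions,'' but in fact~(a) is essential in both directions, precisely because Alexandrov reflection is what converts a compact embedded piece with boundary on a plane into two reflected graphs to which~(b) can be applied. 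Without~(a) the argument cannot pass from graphs to embedded pieces.

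Your rescaling-and-limit route attempts to sidestep this, but the contradiction at the end is a gap: you translate highest points $q_n$ of the pieces $\Sigma'_n$ to the origin and obtain, via the curvature estimate and compactness, a complete $\cH$-graph $\Sigma_\infty$ through the origin lying in a half-space. You then assert that $\Sigma_\infty$ ``reaches arbitrarily large height,'' but this is not justified. The limit is a local $C^2$ limit around the translated points, and after the translation by $q_n$ nothing forces $\Sigma_\infty$ to be unbounded in the $e_3$-direction --- indeed the whole point of extremal-point normalization is that the limit lies to one side of $\{x_3=0\}$ and touches it. Applying~(b) to $\Sigma_\infty$ intersected with half-spaces only bounds pieces of $\Sigma_\infty$ itself, and does not feed back into a bound on the heights of the original, non-graph pieces $\Sigma'_n$ (which is what you actually need). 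Finally, your scheme, even if repaired, would only give ``$\Sigma$ lies in \emph{some} slab,'' not a slab of width $D(\cH,v)$ independent of $\Sigma$; the paper's two-step application of plane separation (first $\{x_3=\pm d\}$, then $\{x_3=d-2C\}$ and $\{x_3=-d-2C\}$) is what produces the explicit uniform width $D=2d+4C$.

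The correct and much shorter route is: apply the plane separation lemma to conclude that (say) $\Sigma\cap\{x_3\ge d\}$ has only compact components, then apply Corollary~\ref{cor:hees} (which combines~(a) and~(b) via Alexandrov reflection) to bound each such component in the slab $d\le x_3\le d+C$, and then iterate with a lower pair of planes to pin down the other side and obtain the uniform width $D$.
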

\begin{proof}
%
Let $\Pi_1,\Pi_2$ be two planes parallel to $\Pi=v^{\perp}$, at a distance $2d$ greater than $2/H_0$, where $H_0:={\rm min} \, \cH$, and assume that both $\Pi_1,\Pi_2$ intersect $\Sigma$ (if such a pair of planes does not exist, then $\Sigma$ lies in a slab of $\R^3$ of width $2d$, and the result follows). After a change of Euclidean coordinates, we may assume that $v=e_3$, that $\Pi_1=\{x_3=d\}$, and $\Pi_2=\{x_3=-d\}$ for some $d>1/H_0$. Since $\Sigma$ is properly embedded and only has one end, we can write $\Sigma=\Sigma_0 \cup \cA$, where $\Sigma_0$ is a compact surface with boundary, and $\cA$ is a proper embedding of $\overline{\D}-\{0\}$ into $\R^3$. Using this decomposition and the plane separation lemma (Theorem \ref{seplem}), we deduce that either $\Sigma\cap \{x_3\geq d\}$ or $\Sigma\cap \{x_3\leq -d\}$ only has compact connected components. Say, for definiteness that $\Sigma\cap \{x_3\geq d\}$ has this property. By Corollary \ref{cor:hees}, $\Sigma\cap \{x_3\geq d\}$ is contained in the slab $d\leq x_3 \leq d+C$, where $C=C(\cH)$ is the constant appearing in that corollary. In particular, $\Sigma$ is contained in the half-space $\{x_3\leq d+C\}$.

Consider next the planes $x_3=d-2C$ and $x_3=-d-2C$. By the same arguments, at least one of $\Sigma\cap \{x_3\geq d-2C\}$ or $\Sigma\cap \{x_3\leq -d-2C\}$ only has compact connected components. In case $\Sigma\cap \{x_3\geq d-2C\}$ had this property, the previous arguments show that $\Sigma$ would lie in the half-space $\{x_3 \leq d-C\}$, which is not possible since $\Sigma$ intersects by hypothesis the plane $x_3=d$. Thus, $\Sigma\cap \{x_3\leq -d-2C\}$ only has compact connected components, and arguing as in the previous paragraph we deduce that $\Sigma$ is contained in the half-space $\{x_3 \geq -d-3C\}$. Hence, $\Sigma$ is contained in a slab of width $D=2d+4C$ between two planes parallel to $v^{\perp}$. This proves Theorem \ref{t1pe}.
\end{proof}

We provide next some corollaries of this result. 

\begin{cor}\label{2sim}
Let $\cH\in C^1(\S^2)$, $\cH>0$, satisfy properties (a) and (b) of Theorem \ref{t1pe} with respect to two linearly independent directions $v,w\in \S^2$. Then there is some $\alfa=\alfa(\cH,v,w)>0$ such that the following holds: any properly embedded $\cH$-surface of finite topology and one end lies inside a solid cylinder $C(v\wedge w,\alfa)$ of radius $\alfa$ and axis orthogonal to both $v,w$.
\end{cor}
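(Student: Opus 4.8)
The plan is to apply Theorem \ref{t1pe} separately to the two directions $v$ and $w$. By that theorem, there is a constant $D_v = D(\cH,v) > 0$ such that any properly embedded $\cH$-surface $\Sigma$ of finite topology and one end is contained in a slab $\cS_v$ of width at most $D_v$ between two planes parallel to $v^{\perp}$; and likewise there is $D_w = D(\cH,w) > 0$ and a slab $\cS_w$ of width at most $D_w$ between two planes parallel to $w^{\perp}$ containing $\Sigma$. Hence $\Sigma \subset \cS_v \cap \cS_w$.

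The geometric core of the argument is then the elementary observation that the intersection of two slabs in $\R^3$ whose bounding planes are \emph{not} parallel is contained in a solid cylinder whose axis is the line of direction $v \wedge w$ (the common direction of the two families of bounding planes). Indeed, since $v$ and $w$ are linearly independent, $v \wedge w \neq 0$; write $e = (v\wedge w)/|v\wedge w|$ and let $P = e^{\perp}$ be the orthogonal plane. Projecting $\R^3$ orthogonally onto $P$, the slab $\cS_v$ maps onto a strip of width $\leq D_v$ in $P$ (bounded by two lines), and $\cS_w$ maps onto a strip of width $\leq D_w$; because $v^{\perp}$ and $w^{\perp}$ are distinct planes both containing the direction $e$, these two strips in $P$ are bounded by lines of two different directions, so their intersection is a (possibly degenerate) parallelogram, which is bounded. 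Thus the projection of $\cS_v \cap \cS_w$ to $P$ lies in a disk $D(0,\alfa)$ for a suitable radius $\alfa = \alfa(\cH,v,w)$ depending only on $D_v$, $D_w$ and the angle between $v$ and $w$. Pulling back, $\cS_v \cap \cS_w$ — and hence $\Sigma$ — lies inside the solid cylinder $C(v\wedge w,\alfa)$ of radius $\alfa$ and axis in the direction $e$ orthogonal to both $v$ and $w$, as claimed.

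I expect no serious obstacle here: the only point requiring a line of justification is the bound on the radius $\alfa$ in terms of the slab widths and the angle $\angle(v,w)$, which is a routine planar computation (two strips of widths $d_1, d_2$ meeting at angle $\theta$ have intersection contained in a disk of radius roughly $\tfrac12\sqrt{d_1^2 + d_2^2 + 2 d_1 d_2 |\cos\theta|}/\sin\theta$, or any cruder explicit bound suffices). One should also note that $\alfa$ genuinely depends on $v$ and $w$ and blows up as $v \to \pm w$, which is consistent with the statement and causes no trouble. Everything else is a direct invocation of Theorem \ref{t1pe} in each direction, so the proof is short.
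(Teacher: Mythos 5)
Your proof is correct and is exactly the argument the paper intends: the paper states Corollary \ref{2sim} without proof, treating it as an immediate consequence of applying Theorem \ref{t1pe} in each of the two directions $v$ and $w$ and then observing that the intersection of the two resulting non-parallel slabs is contained in a solid cylinder with axis in the direction $v\wedge w$. Your planar computation for the radius $\alfa$ is a harmless extra detail; the approach matches the paper.
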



As a particular case of Corollary \ref{2sim}, we have:

\begin{cor}\label{cor:pdd}
Let $\cH\in C^1(\S^2)$, $\cH>0$, be rotationally symmetric, i.e. $\cH(x)=\mathfrak{h}(\esiz x,v\esde)$ for some $v\in \S^2$ and some $\ch\in C^1([-1,1])$.

Then, any properly embedded $\cH$-surface in $\R^3$ of finite topology and one end lies inside a solid cylinder of $\R^3$ with axis parallel to $v$.
\end{cor}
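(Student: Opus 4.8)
The plan is to deduce Corollary~\ref{cor:pdd} directly from Corollary~\ref{2sim} by exhibiting, for a rotationally symmetric $\cH$, enough reflective symmetries. First I would observe that if $\cH(x)=\mathfrak{h}(\esiz x,v\esde)$ for some $v\in\S^2$, then $\cH$ is invariant under \emph{every} reflection of $\S^2$ that fixes a geodesic through $\pm v$; equivalently, under every reflection across a plane of $\R^3$ containing the line $\R v$. Indeed such a reflection $\Phi$ satisfies $\esiz \Phi(x),v\esde=\esiz x,\Phi(v)\esde=\esiz x,v\esde$, so $\cH\circ\Phi=\cH$. In particular, condition (a) of Theorem~\ref{t1pe} holds for any unit vector $w\perp v$: the geodesic $\S^2\cap w^\perp$ passes through $\pm v$ (since $\esiz v,w\esde=0$), and the corresponding reflection fixes that geodesic and preserves $\cH$.

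Next I would check hypothesis (b) of Theorem~\ref{t1pe}, i.e. the existence of a uniform height estimate for $\cH$-graphs in the directions $w$ and $-w$, for suitable $w\perp v$. Since $\cH$ is rotationally symmetric, the height-estimate property in a direction $w$ depends only on the restriction of $\cH$ to $\overline{\S^2_+}$ relative to $w$, which is governed by $\mathfrak{h}$ on $[\esiz w,v\esde\cdot(\text{range})]$; by rotational symmetry this restriction is actually the \emph{same} function (up to the rotation taking $e_3$ to $w$) for all $w$ in the equator $\S^2\cap v^\perp$, and it is clearly invariant under $w\mapsto -w$. Hence the existence of a height estimate in the $w$-direction is independent of the choice of $w\perp v$, and holds for $-w$ whenever it holds for $w$. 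To actually \emph{guarantee} the estimate, I would invoke Theorem~\ref{th:hees}: for a rotationally symmetric $\cH>0$ one always falls into one of its cases — either condition \eqref{conci} fails (item~1), or it holds and one can construct the barrier of item~2 (equivalently item~5 when $\cH$ has small oscillation, or the $\cH^*$-hemisphere comparison of item~4 using Theorem~\ref{tgg} to produce a Guan--Guan sphere for an even extension of $\mathfrak{h}$). The cleanest route is: extend $\mathfrak{h}$ evenly and invoke the rotationally symmetric corollary already proved after Theorem~\ref{th:hees} giving a height estimate when $\mathfrak{h}'(0)\le 0$, or directly produce the $\cH^*$-hemisphere via Theorem~\ref{tgg}; in all cases $\cH$ rotationally symmetric and positive admits a uniform height estimate in every direction $w\perp v$.

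With (a) and (b) verified for any two linearly independent directions $v_1,v_2$ in the equator $\S^2\cap v^\perp$, Corollary~\ref{2sim} applies and yields that any properly embedded $\cH$-surface $\Sigma$ of finite topology and one end lies inside a solid cylinder $C(v_1\wedge v_2,\alpha)$ whose axis is orthogonal to both $v_1$ and $v_2$. But $v_1,v_2$ span $v^\perp$, so the common orthogonal direction is exactly $\R v$; hence the axis of the cylinder is parallel to $v$, which is the assertion of Corollary~\ref{cor:pdd}.

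The only genuine obstacle is pinning down hypothesis (b) — i.e. being certain that a \emph{uniform} height estimate is available in the equatorial directions. Conditions (a) is essentially immediate from the definition of rotational symmetry, and the reduction via Corollary~\ref{2sim} is purely formal once (a) and (b) hold. For (b) one must either cite the already-established rotationally symmetric height-estimate corollary (which covers $\mathfrak{h}'(0)\le 0$) or, in general, combine item~4 of Theorem~\ref{th:hees} with Theorem~\ref{tgg} applied to an even, positive $C^1$ (or $C^2$) radial function $\cH^*\ge\cH$ agreeing with $\cH$ on the relevant equator; I would carry this out carefully to make sure the resulting $\cH^*$-hemisphere exists and satisfies $\cH^*>\cH$ off the equator, which is where a small amount of case analysis is hidden.
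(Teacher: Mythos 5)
Your formal reduction is the same as the paper's: via Theorem \ref{t1pe} (or Corollary \ref{2sim}) it suffices to produce a uniform height estimate for $\cH$-graphs in directions $w\perp v$, and condition (a) there (invariance of $\cH$ under the reflection across any plane containing $\R v$) is indeed immediate from rotational symmetry, as you say. The gap is in your verification of condition (b). When $w\perp v$, the restriction of $\cH$ to the closed hemisphere $\overline{\S^2_+}$ determined by $w$ is \emph{not} rotationally symmetric about $w$: the symmetry axis $v$ now lies on the equator $S^1=\S^2\cap w^\perp$, so $\esiz x,v\esde$ sweeps the full interval $[-1,1]$ as $x$ runs over $S^1$. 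Consequently, the corollary stated after Theorem \ref{th:hees} (the one asserting a height estimate when $\mathfrak{h}'(0)\leq 0$) is inapplicable, since it assumes the graphing direction coincides with the axis of rotational symmetry. Likewise, your proposed use of item 4 via an even extension $\mathfrak{h}^*\geq\mathfrak{h}$ and Theorem \ref{tgg} cannot produce the required $\cH^*$: item 4 demands $\cH^*=\cH$ on the full equator $S^1$, but $\cH|_{S^1}$ already records $\mathfrak{h}$ on $[-1,1]$, so no even $\mathfrak{h}^*$ can match it unless $\mathfrak{h}$ was already even (the case of Corollary \ref{cor:3sim}). Finally, item 1 (failure of \eqref{conci}) is not guaranteed for non-even $\mathfrak{h}$, since the equatorial integral $\int_{0}^{2\pi}\cos\theta / \mathfrak{h}(\cos\theta)\,d\theta$ can vanish even when $\mathfrak{h}$ is asymmetric. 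So none of the routes you propose through Theorem \ref{th:hees} covers the general rotationally symmetric case.

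The paper's actual proof of (b) is genuinely different and bypasses Theorem \ref{th:hees}: one first reduces (via Lemma \ref{lemi}) to \emph{compact} graphs over $\Pi=w^\perp$ whose boundary components have diameter at most $2/H_0$, and then runs an Alexandrov reflection argument with respect to planes orthogonal to a \emph{tilted} direction $\xi\in v^\perp$ making angle $\pi/4$ with $w$, following Theorem 6.2 of \cite{EGR}. This is possible precisely because rotational symmetry about $v$ makes $\cH$ reflection-invariant across every plane containing $v$, not just $w^\perp$ itself, and it gives the uniform height estimate for every positive rotationally symmetric $\cH$ without any evenness assumption on $\mathfrak{h}$. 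If you want to fix your proof, this tilted-reflection step is the missing ingredient.
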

\begin{proof} By Theorem \ref{t1pe}  (or by Corollary \ref{2sim}), it suffices to show that there exist uniform height estimates for $\cH$-graphs in any direction of $\R^3$ orthogonal to $v$. In order to prove this, we can use the argument in the proof of Theorem 6.2 of \cite{EGR}, which we sketch next.

Take $w\in \S^2$ orthogonal to $v$. First observe that, by Lemma \ref{lemi}, in order to prove existence of uniform height estimates for $\cH$-graphs in the $w$-direction, it suffices to do so for \emph{compact} graphs $\Sigma\subset \R^3$ with $\parc \Sigma$ contained in the plane $\Pi=w^{\perp}$, and with the diameter of each connected component of $\parc \Sigma$ being less than $2/H_0$, where $0<H_0<{\rm min}_{\S^2} \cH$. 

Thus, let $\Sigma$ be an such graph, and let $\xi\in \S^2$ be another vector orthogonal to $v$, and which makes an angle of $\pi/4$ with $w$. Note that, as $\cH$ is rotationally symmetric, we can apply the Alexandrov reflection principle with respect to the family of planes of $\R^3$ orthogonal to $\xi$. By the argument in \cite[Theorem 6.2]{EGR} using reflections in this \emph{tilted direction} $\xi$, it can be shown that there exists $C=C(\cH)>0$ (independent of $\Sigma$) such that the distance of each $p\in \Sigma$ to the plane $\Pi$ is bounded by $C$; we omit the specific details. This proves the desired existence of height estimates, and hence, also Corollary \ref{cor:pdd}.

%

\end{proof}

For the case of three reflection symmetries for $\cH$, we obtain a more definite classification result.

\begin{teo}\label{3sim}
Let $\cH\in C^2(\S^2)$, $\cH>0$. Assume that $\cH$ is invariant under three linearly independent geodesic reflections of $\S^2$.

Then, any properly embedded $\cH$-surface $\Sigma_{\cH}$ in $\R^3$ of finite topology and at most one end is the Guan-Guan sphere $S_{\cH}$ associated to $\cH$.
\end{teo}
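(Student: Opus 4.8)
The plan is to reduce the theorem, by means of a priori slab estimates in three independent directions, to the already-settled compact case of Corollary \ref{3planes}. First I would record the elementary observation underlying everything: if $T_1,T_2,T_3$ are geodesic reflections of $\S^2$ fixing the great circles $\S^2\cap v_i^{\perp}$, with $v_1,v_2,v_3\in\S^2$ linearly independent, then the group $G$ of isometries of $\S^2$ they generate has no fixed point on $\S^2$; indeed, a common fixed point $p$ would have to satisfy $\langle p,v_i\rangle=0$ for $i=1,2,3$, i.e. $p$ would be orthogonal to the whole of $\R^3$ (since $v_1,v_2,v_3$ span it), which is impossible. Since $\cH\in C^2(\S^2)$ is positive and invariant under this fixed-point-free group, Theorem \ref{tgg} provides a strictly convex $\cH$-sphere $S_{\cH}$ (unique among immersed $\cH$-spheres up to translation, by Theorem \ref{hopf}), and then Corollary \ref{cor:esfe} yields uniform height estimates for $\cH$-graphs with respect to \emph{every} direction $v\in\S^2$. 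In particular, $\cH$ satisfies hypotheses (a) and (b) of Theorem \ref{t1pe} simultaneously with respect to each of the directions $v_1,v_2,v_3$.

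Next I would split the argument according to the number of ends of a properly embedded $\cH$-surface $\Sigma$ of finite topology with at most one end. If $\Sigma$ is compact (no ends), then Corollary \ref{3planes} applies verbatim --- $G$ is a fixed-point-free group of isometries of $\S^2$ containing two geodesic reflections --- and gives that $\Sigma$ is a translate of $S_{\cH}$, which is the desired conclusion. So the remaining task is to show that the case of exactly one end cannot occur. In that case I would apply Theorem \ref{t1pe} in turn with $v=v_1$, $v=v_2$ and $v=v_3$, concluding that $\Sigma$ is contained in a slab $\mathcal{S}_i$ of finite width bounded by two planes orthogonal to $v_i$, for $i=1,2,3$. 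Because $v_1,v_2,v_3$ are linearly independent, the linear map $x\mapsto(\langle x,v_1\rangle,\langle x,v_2\rangle,\langle x,v_3\rangle)$ is an isomorphism of $\R^3$ that carries $\mathcal{S}_1\cap\mathcal{S}_2\cap\mathcal{S}_3$ into a bounded box; hence $\Sigma$ lies in a bounded region of $\R^3$. Since a properly embedded surface is a closed subset of $\R^3$, $\Sigma$ would then be compact, contradicting the assumption that it has one end. This contradiction finishes the proof.

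I expect no serious analytic obstacle here, since all the hard estimates are already contained in Theorem \ref{th:curv}, Theorem \ref{th:hees}, Theorem \ref{t1pe} and Corollary \ref{3planes}; the only genuinely new inputs are the no-fixed-point property of $G$ and the observation that an intersection of three slabs in linearly independent directions is bounded. The one place that requires care --- and where the $C^2$ hypothesis on $\cH$ is essential --- is ensuring that hypothesis (b) of Theorem \ref{t1pe} (uniform height estimates in the directions $\pm v_i$) is legitimately available; this is precisely what the existence of the Guan--Guan sphere $S_{\cH}$, via Corollary \ref{cor:esfe}, guarantees in all directions at once.
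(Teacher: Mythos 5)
Your proposal is correct and follows essentially the same chain of results as the paper's own proof: Theorem \ref{tgg} gives the Guan--Guan sphere $S_{\cH}$, Corollary \ref{cor:esfe} gives height estimates in every direction, Theorem \ref{t1pe} applied in the three symmetry directions traps $\Sigma$ in a bounded region, and Corollary \ref{3planes} then identifies the resulting compact surface with $S_{\cH}$. The only difference is cosmetic: you spell out the elementary verification that the group generated by three independent geodesic reflections is fixed-point-free, and you phrase the endgame as a contradiction in the one-end case rather than directly concluding compactness; the paper leaves the former as an earlier remark and states the latter without casework.
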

\begin{proof}
By Theorem \ref{tgg} we know that the Guan-Guan strictly convex $\cH$-sphere $S_{\cH}$ exists. By Corollary \ref{cor:esfe}, there exist uniform height estimates for $\cH$-graphs in any direction $v\in \S^2$. Thus, by Theorem \ref{t1pe} applied to the three directions of symmetry of $\cH$, we conclude that $\Sigma_{\cH}$ lies in a compact region of $\R^3$. Since $\Sigma_{\cH}$ is proper in $\R^3$, $\Sigma_{\cH}$ is compact. By Corollary \ref{3planes}, $\Sigma_{\cH}=S_{\cH}$ (up to translation), what proves the result.
\end{proof}
\begin{cor}\label{cor:3sim}
Let $\cH\in C^2(\S^2)$, $\cH>0$, be rotationally symmetric and even, i.e. $\cH(x)=\mathfrak{h}(\esiz x,v\esde)$ for some $v\in \S^2$ and some $\mathfrak{h}\in C^2([-1,1])$ with $\mathfrak{h}(x)=\mathfrak{h}(-x)$.

Then, any properly embedded $\cH$-surface in $\R^3$ of finite topology and at most one end is the convex rotational $\cH$-sphere $S_{\cH}$ associated to $\cH$, with rotation axis parallel to $v$.
\end{cor}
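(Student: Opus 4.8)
The plan is to deduce the statement from Theorem \ref{3sim}, so the first step is to check that a rotationally symmetric and even $\cH\in C^2(\S^2)$ is invariant under three linearly independent geodesic reflections of $\S^2$. Write $\cH(x)=\mathfrak{h}(\langle x,v\rangle)$. For any linear reflection $R$ of $\R^3$ whose mirror plane contains $v$ we have $R(v)=v$, hence $\langle R(x),v\rangle=\langle x,R(v)\rangle=\langle x,v\rangle$ and therefore $\cH\circ R=\cH$ on $\S^2$; choosing two such reflections $R_1,R_2$ whose mirror planes $\Pi_1,\Pi_2$ contain $v$ and whose unit normals $n_1,n_2\in v^{\perp}$ are orthogonal gives two of the required symmetries. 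For the third, let $R_3$ be the reflection across the plane $v^{\perp}$; then $\langle R_3(x),v\rangle=-\langle x,v\rangle$, so $\cH(R_3(x))=\mathfrak{h}(-\langle x,v\rangle)=\mathfrak{h}(\langle x,v\rangle)=\cH(x)$ by evenness of $\mathfrak{h}$. The normals $n_1,n_2,v$ of the three mirror planes are pairwise orthogonal, so the three geodesic reflections are linearly independent and $\cH$ satisfies all hypotheses of Theorem \ref{3sim}.

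Applying Theorem \ref{3sim} we conclude that any properly embedded $\cH$-surface in $\R^3$ of finite topology and at most one end is (up to translation) the Guan--Guan sphere $S_{\cH}$; recall that $S_{\cH}$ exists and is strictly convex because $\cH(x)=\cH(-x)>0$ (Theorem \ref{tgg}), and that it is the unique immersed $\cH$-sphere in $\R^3$ by Theorem \ref{hopf}. It remains only to identify $S_{\cH}$ as a rotational surface whose axis is parallel to $v$.

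For this last point, for each $\theta$ let $\Phi_{\theta}\in SO(3)$ be the rotation of angle $\theta$ about the line $\R v$ through the origin. Since $\cH$ is rotationally symmetric, $\cH\circ\Phi_{\theta}=\cH$, so by Lemma \ref{sime} the surface $\Phi_{\theta}(S_{\cH})$ is again a strictly convex immersed $\cH$-sphere; by the uniqueness in Theorem \ref{hopf} there is a translation vector $w(\theta)\in\R^3$ with $\Phi_{\theta}(S_{\cH})=S_{\cH}+w(\theta)$. Let $c\in\R^3$ be the centroid of $S_{\cH}$ with respect to its area measure. Taking centroids on both sides, using that the centroid is equivariant under isometries and additive under translations, we get $\Phi_{\theta}(c)=c+w(\theta)$, i.e. $w(\theta)=\Phi_{\theta}(c)-c$, and substituting back, $\Phi_{\theta}(S_{\cH}-c)=\Phi_{\theta}(S_{\cH})-\Phi_{\theta}(c)=S_{\cH}-c$ for every $\theta$. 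Hence $S_{\cH}$ is invariant under all rotations about the line through $c$ with direction $v$, so it is the rotational $\cH$-sphere with rotation axis parallel to $v$, which proves the corollary. The only non-formal point is this last paragraph, namely upgrading ``$S_{\cH}$ is invariant under rotations about $\R v$ up to translations'' to genuine rotational invariance about a translated axis; the centroid trick (equivalently, the vanishing of the relevant $SO(2)$-cocycle) disposes of it cleanly.
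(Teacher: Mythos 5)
Your proof is correct and takes the route the paper clearly intends: observe that a rotationally symmetric even $\cH$ is fixed by three geodesic reflections with pairwise orthogonal mirror normals $n_1,n_2,v$, then invoke Theorem \ref{3sim}. The paper states the corollary with no explicit proof and, in particular, does not justify the phrase ``with rotation axis parallel to $v$''; your centroid argument (equivariance of the area centroid under affine isometries, used to kill the translational cocycle $w(\theta)$) is a clean and self-contained way to supply that missing step — the alternative implicit in the paper being to produce a rotational $\cH$-sphere directly, e.g.\ from \cite{BGM}, and appeal to the uniqueness in Theorem \ref{hopf}.
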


The regularity in Collorary \ref{cor:3sim} can be weakened to $\cH\in C^1(\S^2)$, by using the existence of rotational $\cH$-spheres proved in \cite{BGM}. Moreover, as mentioned earlier, in \cite{BGM} we show that there exists complete non-entire rotational convex $\cH$-graphs in $\R^3$ for some rotationally symmetric functions $\cH\in C^1(\S^2)$. These examples show that Theorem \ref{3sim} is not true if we only assume that $\cH$ is invariant by two geodesic reflections in $\S^2$ and that Corollary \ref{cor:3sim} is not true if we do not assume that $\cH$ is even.

\section{Stability of $\cH$-surfaces}\label{sec:stable}

\subsection{The stability operator of $\cH$-hypersurfaces}\label{sec:scmc}

This section is devoted to the study of stability of $\cH$-hypersurfaces in $\R^{n+1}$. We start by recalling some basic notions about stability of CMC hypersurfaces. Let $\Sigma^n$ be an immersed, oriented hypersurface in $\R^{n+1}$ with constant mean curvature $H\in \R$. We define its \emph{stability operator} (or \emph{Jacobi operator}) as $\mathcal{L}:=\Delta + |\sigma|^2$, where $\Delta$ denotes the Laplacian of $(\Sigma,g)$ and $|\sigma|$ is the norm of the second fundamental form of $\Sigma$. As $\cL$ is a Schrodinger operator, it is $L^2$ self-adjoint.

The stability operator $\cL$ appears when considering the second variation of the functional ${\rm Area} - n H \, {\rm Volume}$  of which $\Sigma$ is a critical point. We say that $\Sigma$ is \emph{strongly stable} (or simply \emph{stable}) is $-\cL$ is a non-negative operator, that is, $-\int_{\Sigma} f \cL f \geq 0$ for all $f\in C_0^{\8}(\Sigma)$. By a classical criterion by Fischer-Colbrie \cite{FC}, $\Sigma$ is stable if and only if there exists a positive function $u\in C^{\8}(\Sigma)$ with $\cL u \leq 0$.


The stability operator $\cL$ also appears as the \emph{linearized mean curvature operator}. Specifically, consider a normal variation 
 \begin{equation}\label{normalvar}
 (p,t)\in \Sigma\times (-\ep,\ep) \mapsto  p+ t f(p) \eta(p),
  \end{equation}
of an immersed oriented hypersurface $\Sigma$ in $\R^{n+1}$, where $\eta:\Sigma\flecha \S^n$ is the unit normal of $\Sigma$ and $f\in C_0^{\8}(\Sigma)$. Then, if for each $t\in(-\ep,\ep)$ we denote by $H(t)$ the mean curvature function of the corresponding surface $\Sigma_t$ given by \eqref{normalvar}, we have 
 \begin{equation}\label{limifo}\cL f = n H'(0).\end{equation}

Let us consider next the case of $\cH$-hypersurfaces in $\R^{n+1}$, for some $\cH\in C^1(\S^n)$, not necessarily constant. In this general situation, there is no known variational characterization of $\cH$-hypersurfaces similar to the one of the CMC case explained above. Still, we can define a stability operator associated to $\cH$-hypersurfaces by considering the linearization of \eqref{presH}, just as in the above characterization of the CMC case. We do this next.


\begin{pro}\label{varcar}
Let $\Sigma$ be an $\cH$-hypersurface in $\R^{n+1}$ for some $\cH\in C^1(\S^n)$, let $f\in C_0^{2}(\Sigma)$, and for each $t$ small enough, let $\Sigma_t$ denote the hypersurface given by \eqref{normalvar}, where $\eta:\Sigma\flecha \S^n$ is the unit normal of $\Sigma$. Denote $\hat{\cH}(t):= H(t)-\cH(\eta^t):\Sigma\times (-\ep,\ep)\flecha \R$, where $H(t)$ and $\eta^t$ stand, respectively, for the mean curvature and the unit normal of $\Sigma_t$. Then,
 \begin{equation}\label{stabh}
n \hat{\mathcal{H}}'(0) = \cL f, \hspace{1cm} \cL f:= \Delta f+ \esiz X_{\cH},\nabla f\esde + |\sigma|^2 f,
 \end{equation}
where $X_{\cH}\in \X(\Sigma)$ is given for each $p\in \Sigma$ by $X_{\cH}(p):=n\nabla_{\S} \cH(\eta(p))$; here $\nabla_{\S}$ denotes the gradient in $\S^n$, and $\Delta,\nabla, |\sigma |$ denote, respectively, the Laplacian, gradient and norm of the second fundamental form of $\Sigma$ with its induced metric.
\end{pro}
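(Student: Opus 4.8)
The plan is to compute the first-order variation of $\hat{\cH}(t) = H(t) - \cH(\eta^t)$ directly, splitting it into the classical mean-curvature term and the new prescribed-curvature term. The first term is standard: for the normal variation \eqref{normalvar} of an oriented hypersurface, the well-known formula for the linearized mean curvature operator \eqref{limifo} gives $n\,H'(0) = \Delta f + |\sigma|^2 f$ (this is exactly the CMC Jacobi operator, and the computation does not use constancy of $H$; it is purely the first variation of the mean curvature under a normal deformation). So the only genuinely new piece is $-\frac{d}{dt}\big|_{t=0}\,\cH(\eta^t)$.

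Next I would compute $\frac{d}{dt}\big|_{t=0}\eta^t$, the variation of the Gauss map. For the normal variation $p \mapsto p + t f(p)\eta(p)$, a standard computation gives that the unit normal varies tangentially: $\frac{d}{dt}\big|_{t=0}\eta^t = -\nabla f$, where $\nabla f$ is the gradient of $f$ on $\Sigma$ with respect to the induced metric (this is the same identity underlying the fact that the Gauss map of a minimal surface variation moves tangentially; it follows by differentiating $\langle \eta^t, \partial_i F_t\rangle = 0$ and $|\eta^t|^2 = 1$ and using that the deformation vector field is $f\eta$). Then by the chain rule, identifying $\eta(p)\in\S^n$ and differentiating $\cH$ along the curve $t\mapsto \eta^t(p)$,
\[
\frac{d}{dt}\Big|_{t=0}\cH(\eta^t(p)) = \esiz \nabla_{\S}\cH(\eta(p)),\, \partial_t\eta^t(p)\big|_{t=0}\esde_{\S} = -\esiz \nabla_{\S}\cH(\eta(p)),\, \nabla f(p)\esde,
\]
where the inner product on the right can be taken either in $\S^n$ or, after noting $\partial_t\eta^t|_{t=0} = -\nabla f \in T_p\Sigma = T_{\eta(p)}\S^n$, in $\R^{n+1}$; the two agree since $\nabla_{\S}\cH(\eta(p))$ is already tangent to $\S^n$ at $\eta(p)$, which is the plane $T_p\Sigma$. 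Hence $-\frac{d}{dt}\big|_{t=0}\cH(\eta^t) = \esiz X_{\cH},\nabla f\esde$ with $X_{\cH}(p) = n\nabla_{\S}\cH(\eta(p))$ once we multiply through by $n$.

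Putting the two pieces together yields $n\,\hat{\cH}'(0) = (\Delta f + |\sigma|^2 f) + \esiz X_{\cH},\nabla f\esde = \cL f$, which is \eqref{stabh}. I would also remark that $X_{\cH}\in\X(\Sigma)$ is well-defined and of the claimed regularity: since $\cH\in C^1(\S^n)$ its spherical gradient $\nabla_{\S}\cH$ is a continuous (indeed $C^0$, and $C^{k-1}$ if $\cH\in C^k$) vector field on $\S^n$, and precomposition with the Gauss map $\eta$ keeps it tangent to $\Sigma$; the regularity $q\in C^2(\Sigma)$ for the zeroth-order coefficient $|\sigma|^2$ is where one uses elliptic regularity of the $\cH$-hypersurface equation \eqref{eqH}, as already exploited in the excerpt.

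I expect the main obstacle to be purely bookkeeping rather than conceptual: namely, being careful and consistent about (i) the sign and tangential nature of $\partial_t\eta^t|_{t=0}$, and (ii) the identification of $T_{\eta(p)}\S^n$ with $T_p\Sigma$ so that $\nabla_{\S}\cH(\eta(p))$ can legitimately be paired with the tangent vector $\nabla f(p)$ on $\Sigma$. Once these identifications are pinned down, the formula \eqref{stabh} drops out; no hard analysis is needed beyond the standard first-variation-of-mean-curvature computation, which I would simply cite.
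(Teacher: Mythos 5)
Your proof is correct and follows essentially the same route as the paper's: split $n\hat{\cH}'(0)$ into the classical Jacobi term $\Delta f + |\sigma|^2 f$ coming from $H(t)$, then compute the first variation of the Gauss map and apply the chain rule to $-\cH(\eta^t)$, using the identification $T_{\eta(p)}\S^n = \eta(p)^\perp = T_p\Sigma$ to pair $\nabla_\S\cH(\eta)$ with $\nabla f$. The only substantive point worth flagging is the sign of the Gauss map variation, since it is the crux of the vector field $X_\cH$. Your value $\frac{d}{dt}\big|_{t=0}\eta^t = -\nabla f$ is the correct one: on the flat model $F_t(x) = (x, tf(x))$ over a plane with $\eta = e_{n+1}$, the upward unit normal of the graph is $(-tDf, 1)/\sqrt{1+t^2|Df|^2}$, whose $t$-derivative at $0$ is $(-Df, 0) = -\nabla f$. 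The paper's own proof derives instead $\dot\eta^t|_{t=0} = +\nabla f$ by a moving-frame/wedge-product computation (its equation (4.6)), but that derivation drops a sign in the identity $e_1\wedge\cdots\wedge\eta\wedge\cdots\wedge e_n = -e_i$; read literally, the paper's chain of equalities would produce $-n\langle\nabla_\S\cH(\eta),\nabla f\rangle$, which has the opposite sign from the stated $X_\cH = n\nabla_\S\cH(\eta)$. Your version is both consistent with the statement of the Proposition and sign-correct at every intermediate step, so it in fact cleans up the paper's argument.
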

\begin{proof}
From now on we work at some fixed but arbitrary $p\in \Sigma$, which will be omitted for clarity reasons. From $\hat{\cH}(t)=H(t)-\cH(\eta^t)$ and \eqref{limifo} we have
 \begin{equation}\label{vc1}
 \def\arraystretch{1.6}\begin{array}{lll}
 n\hat{\mathcal{H}}'(0)&=&  \Delta f + |\sigma|^2 f - n \left. \frac{d}{dt}\right|_{t=0} (\cH(\eta^t))\\ & = & \Delta f + |\sigma|^2 f - n \left \esiz \nabla_{\S} \cH(\eta(p)),\left. \frac{d}{dt}\right |_{t=0} (\eta^t) \right\esde .
 \end{array}
 \end{equation}
Let $\{e_1,\dots, e_n\}$ be a positively oriented orthonormal basis of principal directions in $\Sigma$ at $p$. Then, for $t\in (-\ep,\ep)$, this basis transforms via \eqref{normalvar} to the positively oriented basis $\{e_1^t,\dots, e_n^t\}$ on $\Sigma_t$ given by
 \begin{equation}\label{vc2}
 e_i^t= (1-t f \kappa_i) e_i + t df(e_i) \eta,
 \end{equation}
 where $\kappa_i$ is the principal curvature of $\Sigma$ at $p$ associated to $e_i$. From \eqref{vc2} we get
  $$\left. \frac{d}{dt}\right|_{t=0} (e_1^t\wedge \cdots \wedge e_n^t)=-nHf \eta + \sum_{i=1}^n df(e_i)e_i = -n H f \eta + \nabla f.$$ Noting that $\eta^t=(e_1^t\wedge \cdots \wedge e_n^t)/|e_1^t\wedge \cdots \wedge e_n^t |$, we have
   \begin{equation}\label{vc3}
   \left. \frac{d}{dt}\right|_{t=0} (\eta^t) = \nabla f.
   \end{equation} 
 From \eqref{vc1}, \eqref{vc2}, \eqref{vc3} we obtain \eqref{stabh}. This finishes the proof of Proposition \ref{varcar}.
\end{proof}
Proposition \ref{varcar} justifies the following definition:

\begin{defi}\label{def:stabH}
Let $\Sigma$ be an $\cH$-hypersurface in $\R^{n+1}$ for some $\cH\in C^1(\S^n)$. The \emph{stability operator} of $\Sigma$ is defined as the operator $\cL$ on $\Sigma$ given for each $f\in C_0^{2}(\Sigma)$ by
 \begin{equation}\label{stabo}
\cL f:= \Delta f+ \esiz X_{\cH},\nabla f\esde + |\sigma|^2 f, \hspace{1cm} X_{\cH}(p):=n \nabla_{\S} \cH(\eta(p)).
 \end{equation}
\end{defi}

Note that when $\cH$ is constant, this definition coincides with that of the standard stability operator of CMC hypersurfaces in $\R^{n+1}$ described above.  When $\cH(x)=\esiz x,e_{n+1}\esde$, this notion is also consistent with the usual definition of the stability operator of self-translating solitons of the mean curvature flow, which correspond to the previous choice of $\cH$; see, e.g. \cite{Es,IR,Gu,SX,Gr}.


Since the property of being an $\cH$-hypersurface is invariant by translations of $\R^{n+1}$, we have:

\begin{cor}\label{nueq}
Let $\Sigma$ be an $\cH$-hypersurface in $\R^{n+1}$ for some $\cH\in C^1(\S^n)$, let $\eta:\Sigma\flecha \S^n$ denote its unit normal, choose $a\in \S^n+$, and define $\nu=\esiz \eta,a\esde \in C^{2}(\Sigma)$. Then $\cL \nu=0$, where $\cL$ is the stability operator \eqref{stabo} of $\Sigma$.
\end{cor}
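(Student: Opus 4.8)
The plan is to derive the identity from the translation invariance of the $\cH$-hypersurface condition together with the variational characterization of $\cL$ in Proposition \ref{varcar}. Fix $a\in\S^n$ and consider the variation of $\Sigma$ given by the translations $F(p,t)=p+ta$, whose variation vector field is the constant field $V\equiv a$. Since every translate $\Sigma_t:=\Sigma+ta$ is again an $\cH$-hypersurface and a translation is an orientation-preserving isometry of $\R^{n+1}$ (so that, under the obvious identification, $\eta^t=\eta$ and $H(t)=H_\Sigma$ for all $t$), the function $\hat{\cH}(t):=H(t)-\cH(\eta^t)$ appearing in Proposition \ref{varcar} vanishes identically on $\Sigma\times(-\ep,\ep)$; in particular $\hat{\cH}'(0)=0$.

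The point that needs care is that Proposition \ref{varcar} is stated for \emph{normal} variations, whereas $V\equiv a$ is not normal; so first I would check that its conclusion $n\hat{\cH}'(0)=\cL f$, with $f:=\esiz V,\eta\esde$, persists for an arbitrary variation field $V$ (and is a pointwise identity, so compact support of $f$ is irrelevant). Write $V=f\eta+V^{\top}$ with $V^{\top}$ tangent to $\Sigma$. Since the first variation of any geometric quantity is linear in $V$, it suffices to treat $V^{\top}$ separately: a purely tangential variation drags each point along $\Sigma$ to first order, hence contributes $n\,dH_{\Sigma}(V^{\top})$ to $nH'(0)$ and $d\eta(V^{\top})$ to $(\eta^t)'(0)$, where $d\eta\colon T_p\Sigma\to T_{\eta(p)}\S^n=\eta(p)^{\perp}=T_p\Sigma$ is the differential of the Gauss map. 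Thus the tangential contributions to $n\hat{\cH}'(0)=nH'(0)-n(\cH\circ\eta^t)'(0)$ add up to $n\,dH_{\Sigma}(V^{\top})-n\esiz\nabla_{\S}\cH(\eta),d\eta(V^{\top})\esde$, and this vanishes because $H_{\Sigma}=\cH\circ\eta$ forces $dH_{\Sigma}(V^{\top})=\esiz\nabla_{\S}\cH(\eta),d\eta(V^{\top})\esde$ by the chain rule (here one uses that $\nabla_{\S}\cH(\eta(p))\in T_{\eta(p)}\S^n=T_p\Sigma$ may be paired with tangent vectors of $\Sigma$, exactly as in the definition of $X_{\cH}$). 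Hence $n\hat{\cH}'(0)=\cL f$ in this generality, and applying it with $V\equiv a$, so that $f=\esiz a,\eta\esde=\nu$, yields $\cL\nu=n\hat{\cH}'(0)=0$.

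I expect the only delicate point to be this first step: verifying rigorously that the tangential component of the variation field produces exactly the two displayed terms and that they cancel. If one wishes to avoid it, there are two shortcuts. One is to reparametrize the translation: let $\phi_t$ be the flow on $\Sigma$ of the tangent field $-a^{\top}$ (where $a=\nu\eta+a^{\top}$) and set $G(p,t)=\phi_t(p)+ta$; then $G(\Sigma,t)=\Sigma+ta$ is still an $\cH$-hypersurface, $\hat{\cH}\equiv 0$ as before, and $\partial_t G|_0=-a^{\top}+a=\nu\eta$ is purely normal, so Proposition \ref{varcar} (whose proof only invokes first-order data of the variation) applies verbatim with $f=\nu$. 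The other is a direct computation: using the Euclidean identity $\Delta\eta=-n\nabla H_{\Sigma}-|\sigma|^2\eta$ (a consequence of the Codazzi equation, the ambient space being flat), one expands $\Delta\nu=\esiz\Delta\eta,a\esde=-n\,dH_{\Sigma}(a^{\top})-|\sigma|^2\nu$, and since $\nabla\nu=d\eta(a^{\top})$ and $H_{\Sigma}=\cH\circ\eta$ one recognizes $n\,dH_{\Sigma}(a^{\top})=\esiz X_{\cH},\nabla\nu\esde$, which gives $\cL\nu=0$ immediately.
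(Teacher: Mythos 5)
Your proposal is correct, and it correctly isolates the one delicate point: Proposition~\ref{varcar} as stated concerns normal variations, while the constant field $a$ is not normal. The paper handles this exactly by your ``shortcut 1'': it absorbs the tangential part of $a$ by reparametrizing along $\Sigma$ (implemented there with the implicit function theorem rather than the flow of $-a^{\top}$, but amounting to the same thing), obtaining a normal variation with normal speed $f$ proportional to $\nu$, for which the translated surfaces remain $\cH$-hypersurfaces; since the right side of \eqref{stabh} depends only on first-order data this gives $\cL\nu = 0$. Your primary route --- decomposing $V=f\eta+V^{\top}$, arguing by linearity, and cancelling the two tangential contributions through the chain-rule identity $dH_\Sigma(V^{\top})=\langle\nabla_{\S}\cH(\eta),d\eta(V^{\top})\rangle$ --- is a sound variant that in effect re-proves the reparametrization invariance pointwise; the cancellation you display is exactly what makes the reparametrization argument legitimate. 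Your ``shortcut 2'' is a genuinely different, self-contained computation bypassing Proposition~\ref{varcar}: from $\Delta\eta=-n\nabla H_\Sigma-|\sigma|^2\eta$, $\nabla\nu=d\eta(a^{\top})$, and $H_\Sigma=\cH\circ\eta$ one reads off $\cL\nu=0$ directly (and identifies $\langle X_{\cH},\nabla\nu\rangle=n\,dH_\Sigma(a^{\top})$ as the term that replaces the usual CMC cancellation of $\nabla H$); this is arguably the most elementary route, trading the variational interpretation for a one-line Codazzi-type identity.
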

\begin{proof} Consider the variation of $\Sigma$ $$(p,\landa)\in \Sigma\times \R \mapsto p + \landa a$$ and call $\Sigma_{\landa}:=\Sigma + \landa a$. By the implicit function theorem, we can write this variation in a neighborhood of each $(p_0,0)\in \Sigma\times \R$ as a \emph{normal variation} of the form \eqref{normalvar}. Specifically, for any $p$ near $p_0$ in $\Sigma$ and any $t\in (-\ep,\ep)$ with $\ep>0$ small enough we can write
 \begin{equation}\label{comparvar}
 p + t f(p) \eta (p)= \phi(p,t) + \landa(t) a 
 \end{equation}
for some smooth function $f$ defined near $p_0$ on $\Sigma$, and where $\phi(p,t)\in \Sigma$ for all $(p,t)$, and $\landa(t)$ is smooth with $\landa(0)=0$ and $\landa'(0)\neq 0$. By taking the normal component of the derivative of \eqref{comparvar} with respect to $t$ at $t=0$ we obtain using $\esiz \frac{\parc \phi}{\parc t} ,\eta\esde =0$ that
 \begin{equation}\label{landap}
 f=\landa'(0) \esiz \eta,a\esde = \landa'(0) \nu.
 \end{equation}
By Proposition \ref{varcar}, $\cL f=0$, since all the surfaces $\Sigma_{\landa}$ (and consequently all surfaces given by $t={\rm constant}$ in \eqref{comparvar}) have the same prescribed mean curvature $\cH\in C^1(\S^n)$. Thus, by \eqref{landap}, we obtain $\cL \nu =0$ as claimed.
\end{proof}


\subsection{Generalized elliptic operators and stable $\cH$-hypersurfaces}

The following terminology is taken from \cite{Es}:

\begin{defi}
Let $(\Sigma,\esiz,\esde)$ be a Riemannian manifold. A \emph{generalized Schrodinger operator} $L$ on $\Sigma$ is an elliptic operator of the form
 \begin{equation}\label{gensc}
 L=\Delta + \esiz X, \nabla \cdot \esde + q,
 \end{equation}
where $q\in C^{2}(\Sigma)$, $X\in \X(\Sigma)$, and $\Delta,\nabla$ stand for the Laplacian and gradient operators on $\Sigma$.
\end{defi}
If $X=\nabla \phi$ for some $\phi\in C^{2}(\Sigma)$, we say that $L$ is a \emph{gradient Schrodinger operator}. If $X=0$, we get a standard Schrodinger operator. Note that the stability operator \eqref{stabo} for hypersurfaces of prescribed mean curvature $\cH\in C^1(\S^n)$ is a generalized Schrodinger operator.

As explained in Section \ref{sec:scmc}, the stability operator for CMC hypersurfaces is a Schrodinger operator, and so it is $L^2$ self-adjoint. For self-translating solitons of the mean curvature flow (i.e. for the choice $\cH(x):=\esiz x,e_{n+1}\esde$), the stability operator is a \emph{gradient} Schrodinger operator, and so it is $L^2$ self-adjoint with respect to an adequate weighted structure. In both cases, the stability operator comes associated to the second variation of an adequate functional.

There also exist some special cases of (non-gradient) generalized Schrodinger operators \eqref{gensc} that appear associated to the second variation of some geometric functionals. This is the case of the stability operator of marginally outer trapped surfaces (usually called, in short, MOTS). See e.g. \cite{AEM,AMS,AM,GaSc}.

We should observe here two key difficulties in working with the stability operator $\cL$ of $\cH$-hypersurfaces in \eqref{stabo}, namely: (i) that $\cL$ is not in general $L^2$ self-adjoint, and (ii) that $\cL$ does not come in general from a variational problem.

Nonetheless, the following definition is natural, as will be discussed below.

\begin{defi}\label{def:stable}
Let $\Sigma$ be an $\cH$-hypersurface in $\R^{n+1}$ for some $\cH\in C^1(\S^n)$, and let $\cL$ denote its stability operator. We say that $\Sigma$ is \emph{stable} if there exists a positive function $u\in C^{2}(\Sigma)$ such that $\cL u \leq 0$.
\end{defi}

By the Fischer-Colbrie theorem \cite{FC}, this notion agrees in the CMC case with the standard definition of stability. The definition also agrees when $\cH(x)=\esiz x,e_{n+1}\esde$ with the usual stability notion for self-translating solutions to the mean curvature flow, see e.g. Proposition 2 in \cite{SX}. Also, the stability notion in Definition \ref{def:stable} is also consistent with the notion of \emph{outermost stability} in MOTS theory, see \cite[Definition 3.1]{AEM}. 

Let us give some further motivation for Definition \ref{def:stable}. Let $L$ be a generalized Schrodinger operator \eqref{gensc} in a Riemannian manifold $\Sigma$, and let $\Omega\subset \Sigma$ denote a smooth bounded domain. Even though $L$ is not formally self-adjoint, it is well known that there exists an eigenvalue $\landa_0$ of $-L$ in $\Omega$ (with Dirichlet conditions) with the smallest real part, that $\landa_0$ is real, and that its corresponding eigenfunction $\psi$ (unique up to multiplication by a non-zero constant) is nowhere zero; see e.g. Section 5 in \cite{AM}. We call $\landa_0$ the \emph{principal eigenvalue} of the operator $L$ on $\Omega$.

By the argument in Remark 5.2 of \cite{AM}, the existence of a positive function $u>0$ on $\Sigma$ such that $Lu\leq 0$ implies that the principal value $\landa_0$ of $-L$ is non-negative on any smooth bounded domain $\Omega\subset \Sigma$. This justifies again the chosen definition for the stability of $\cH$-hypersurfaces in $\R^{n+1}$.

We provide next a geometric interpretation for the stability of $\cH$-hypersurfaces. Let $\Sigma$ denote a stable $\cH$-hypersurface in $\R^{n+1}$, and let $\Omega$ denote a smooth bounded domain of $\Sigma$ whose closure is contained in a larger smooth bounded domain $\Omega'\subset \Sigma$. Then, the principal eigenvalue $\landa_0(\Omega')$ of the stability operator $-\cL$ on $\Omega'$ is non-negative. By the monotonicity of the principal eigenvalue with respect to the inclusion of domains (see e.g. \cite{Pa}), we deduce that $\landa_0(\Omega)>0$. 

Let $\{\Omega_t\}_{t\in (-\ep,\ep)}$ be a smooth variation in $\R^{n+1}$ of the compact (with boundary) $\cH$-hypersurface $\Omega_0:=\overline{\Omega}$, so that the boundary is  fixed by the variation, i.e. $\parc \Omega_t =\parc \Omega$ for every $t$. Assume moreover that all the hypersurfaces $\Omega_t$ are also compact $\cH$-hypersurfaces with boundary (for the same $\cH\in C^1(\S^n)$). By our arguments in Proposition \ref{varcar}, this implies the existence of a function
$u\in C^{2}(\overline{\Omega})$ satisfying $\cL u=0$ on $\Omega$ with $u=0$ on $\parc \Omega$. However, since $\landa_0(\Omega)>0$ by our previous discussion, this function $u$ cannot exist. In other words: \emph{it is not possible to deform a (strictly) stable $\cH$-hypersurface with boundary $\R^{n+1}$ by fixing its boundary, and in a way that all the deformed hypersurfaces in $\R^{n+1}$ have the same prescribed mean curvature $\cH\in C^1(\S^n)$.} This justifies again the notion of stability for $\cH$-hypersurfaces we have introduced.

By Corollary \ref{nueq}, any $\cH$-graph in $\R^{n+1}$ is stable as an $\cH$-hypersurface. More generally, we have:
\begin{cor}
Let $\Sigma$ be an $\cH$-hypersurface in $\R^{n+1}$ for some $\cH\in C^1(\S^n)$, let $\eta:\Sigma\flecha \S^n$ denote its unit normal, and assume that $\esiz \eta,a\esde >0$ for some $a\in \S^n$. Then $\Sigma$ is stable. In particular, any $\cH$-graph is stable.
\end{cor}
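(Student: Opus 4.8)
The plan is to exhibit directly a strictly positive function in the kernel of the stability operator $\cL$, so that Definition \ref{def:stable} is satisfied in the most economical way. First I would set $\nu := \esiz \eta,a\esde$, which by hypothesis is a strictly positive function on $\Sigma$, and which lies in $C^{2}(\Sigma)$ since $\eta$ is of class $C^{2}$ (the coefficients of the $\cH$-graph equation \eqref{eqH} are smooth and $\cH\in C^1$). By Corollary \ref{nueq}, this function satisfies $\cL\nu = 0$ on $\Sigma$, hence in particular $\cL\nu\leq 0$. Taking $u=\nu$ in Definition \ref{def:stable} shows at once that $\Sigma$ is stable. No curvature estimate, eigenvalue comparison, or compactness argument is needed here: the only input is the translation invariance of the $\cH$-hypersurface condition, which is precisely what Corollary \ref{nueq} encodes.

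For the final assertion, I would observe that if $\Sigma$ is an $\cH$-graph then, after a choice of Euclidean coordinates, it is a graph $x_{n+1}=u(x_1,\dots,x_n)$ over a domain of the hyperplane $e_{n+1}^{\perp}$, oriented as in \eqref{eqH} so that its Gauss map takes values in the open hemisphere $\{x\in\S^n:\esiz x,e_{n+1}\esde>0\}$. Thus $\esiz \eta,e_{n+1}\esde>0$ on $\Sigma$, and the first part of the statement applies with $a=e_{n+1}$, giving that $\Sigma$ is stable.

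The main (and essentially only) obstacle has already been dealt with before the present corollary, namely Corollary \ref{nueq}: the fact that the linear combinations $\esiz \eta,a\esde$ of the components of the Gauss map lie in $\ker\cL$, which in turn rests on Proposition \ref{varcar} and the translation invariance of the $\cH$-hypersurface equation. Granting that, the proof of this corollary is immediate.
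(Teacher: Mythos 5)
Your proof is correct and is precisely the argument the paper intends (the paper states this corollary without a written proof, but the sentence immediately preceding it --- ``By Corollary \ref{nueq}, any $\cH$-graph in $\R^{n+1}$ is stable'' --- makes clear that the intended justification is exactly the combination of Corollary \ref{nueq} and Definition \ref{def:stable} that you use).
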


For the case of compact $\cH$-hypersurfaces, we have another trivial consequence:

\begin{cor}\label{conoco}
There are no compact (without boundary) stable $\cH$-hypersurfaces in $\R^{n+1}$.
\end{cor}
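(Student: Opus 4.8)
The plan is to argue by contradiction. Suppose $\Sigma$ is a compact (boundaryless) $\cH$-hypersurface in $\R^{n+1}$ that is stable in the sense of Definition \ref{def:stable}. The key preliminary observation is that the zeroth order coefficient $|\sigma|^2$ of the stability operator $\cL$ is strictly positive on $\Sigma$. Indeed, since $\Sigma$ is compact, Proposition \ref{hpos} forces $\cH$ to have no zeros on $\S^n$, so that $m:=\min_{\S^n}|\cH|>0$; since $H_\Sigma=\cH\circ\eta$ and, by Cauchy--Schwarz applied to the principal curvatures, $|\sigma|^2\geq n\,H_\Sigma^2$ (with equality on round spheres), we get
\[
|\sigma|^2 \;\geq\; n\,(\cH\circ\eta)^2 \;\geq\; n\,m^2 \;>\;0 \qquad\text{on }\Sigma .
\]

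Next I would use stability directly, without any eigenvalue machinery. By Definition \ref{def:stable} there is $u\in C^2(\Sigma)$ with $u>0$ everywhere and $\cL u\leq 0$, where, by Definition \ref{def:stabH},
\[
\cL u \;=\; \Delta u + \esiz X_{\cH},\nabla u\esde + |\sigma|^2\,u .
\]
Since $\Sigma$ is compact, $u$ attains its minimum at some $p_0\in\Sigma$, and there $\nabla u(p_0)=0$ and $\Delta u(p_0)\geq 0$. Evaluating $\cL u$ at $p_0$ then gives
\[
\cL u(p_0) \;=\; \Delta u(p_0) + |\sigma(p_0)|^2\,u(p_0) \;\geq\; |\sigma(p_0)|^2\,u(p_0) \;>\;0 ,
\]
using $|\sigma(p_0)|^2>0$ and $u(p_0)>0$. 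This contradicts $\cL u\leq 0$, which proves the corollary.

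Since the whole argument is a single-point maximum principle computation, there is essentially no obstacle: the only place where the hypotheses are really used is the strict positivity of $|\sigma|^2$ on a compact $\cH$-hypersurface, which is exactly what Proposition \ref{hpos} supplies (this is also why the statement is genuinely about $\cH$-hypersurfaces and not about arbitrary hypersurfaces). One could alternatively phrase the proof via the principal eigenvalue $\lambda_0$ of $-\cL$ on $\Sigma$: stability gives $\lambda_0\geq 0$ as in the discussion of stability preceding this corollary, while evaluating the eigenvalue equation $\Delta\psi+\esiz X_{\cH},\nabla\psi\esde+(|\sigma|^2+\lambda_0)\psi=0$ for a positive principal eigenfunction $\psi$ at a minimum point of $\psi$ yields $\lambda_0\leq -|\sigma|^2<0$, a contradiction; but the direct computation above is shorter and avoids invoking eigenvalue theory on a closed manifold.
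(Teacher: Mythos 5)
Your proof is correct, but it takes a genuinely different route from the paper's. The paper argues through Corollary \ref{nueq}: the functions $\nu_a=\esiz\eta,a\esde$ for $a$ in a basis of $\R^{n+1}$ give $n+1$ linearly independent elements of $\ker\cL$ (linear independence holds because a compact hypersurface cannot have its Gauss image contained in a hyperplane), so $0$ is a non-simple eigenvalue of $-\cL$; since the principal eigenvalue is simple, it must be strictly negative, contradicting stability. You instead observe that compactness plus Proposition \ref{hpos} forces $\cH$ to be nowhere zero, hence $|\sigma|^2\geq n(\cH\circ\eta)^2\geq nm^2>0$ on $\Sigma$, and then you kill stability by evaluating $\cL u$ at a minimum of the positive supersolution $u$. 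Both are valid. Yours is more elementary (a single-point maximum-principle computation, no spectral theory of non-self-adjoint elliptic operators) and makes transparent exactly why compactness matters, but it imports the geometric content of Proposition \ref{hpos}. The paper's argument is a ``soft'' spectral one that does not need the pointwise positivity of $|\sigma|^2$ — it would rule out compact stable examples even if $|\sigma|^2$ vanished somewhere — and fits naturally in the development since Corollary \ref{nueq} was just established; on the other hand it relies on the principal-eigenvalue machinery (simplicity, monotonicity) that the paper only sketches via references. Your alternative eigenvalue-based remark at the end is also sound in spirit, but the direct minimum-point computation is indeed the cleanest way to run your version of the argument.
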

\begin{proof}
Let $\Sigma$ be a compact $\cH$-hypersurface in $\R^{n+1}$, and let $\cL$ denote its stability operator. By Corollary \ref{nueq}, the kernel of $\cL$ has dimension at least $n+1$, so in particular $0$ is an eigenvalue for $-\cL$ that is not simple (we do not fix Dirichlet conditions here, as $\parc \Sigma$ is empty). Hence, the principal eigenvalue $\landa_0(\Sigma)$ of $-\cL$ in $\Sigma$ is negative. Thus, $\Sigma$ is not stable.
\end{proof}

\subsection{Radius and curvature estimates for stable $\cH$-surfaces}

The next lemma is essentially due to Galloway and Schoen \cite{GaSc}:

\begin{lem}\label{lemgs}
Let $(\Sigma,\esiz,\esde)$ be a Riemannian manifold, and let $L=\Delta +\esiz X,\nabla \cdot\esde + q$ denote a generalized Schrodinger operator on $\Sigma$; here $X\in \X(\Sigma)$ and $q\in C^{0}(\Sigma)$.

Assume that there exists $u\in C^{2}(\Sigma)$, $u>0$, with $Lu \leq 0$. Then the Schrodinger operator
 \begin{equation}\label{simsc}
 \overline{L}:= \Delta + Q, \hspace{1cm} Q:=q-\frac{1}{2} {\rm div}(X) - \frac{|X|^2}{4},
 \end{equation}
satisfies that $-\overline{L}$ is non-negative, i.e. $-\int_{\Sigma} f \overline{L}f \geq 0$ for every $f\in C_0^{2}(\Sigma)$.
\end{lem}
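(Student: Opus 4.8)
The plan is to convert the hypothesis on the generalized Schrodinger operator $L$ into a statement about the symmetric operator $\overline{L}$ by means of the classical substitution that trades the first-order drift term against a potential. First I would set $w=\log u$, which is well defined since $u>0$; then $\nabla w = \nabla u / u$ and a direct computation gives
\begin{equation}\label{logtrick}
\frac{Lu}{u} = \Delta w + |\nabla w|^2 + \esiz X,\nabla w\esde + q \leq 0
\end{equation}
on all of $\Sigma$. The point of introducing $w$ is that $\Delta w$ can be integrated by parts against a test function, and the quadratic term $|\nabla w|^2$ will be what allows us to absorb $\frac14|X|^2$.

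Next, for an arbitrary $f\in C_0^2(\Sigma)$ I would test \eqref{logtrick} against $f^2$ and integrate over $\Sigma$. Integration by parts on the term $\int f^2 \Delta w$ produces $-2\int f \esiz \nabla f,\nabla w\esde$, so we obtain
\begin{equation}\label{afterparts}
\int_{\Sigma} \left( -2 f \esiz \nabla f,\nabla w\esde + f^2 |\nabla w|^2 + f^2 \esiz X,\nabla w\esde + q f^2 \right) \leq 0.
\end{equation}
The first two terms combine, after completing the square, into $\int |f\nabla w - \nabla f|^2 - \int |\nabla f|^2$; similarly I would handle the $f^2\esiz X,\nabla w\esde$ term. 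Concretely, the integrand's gradient-in-$w$ part is a quadratic form in $\nabla w$, namely $f^2|\nabla w|^2 + \esiz f^2 X - 2f\nabla f, \nabla w\esde$, whose minimum over $\nabla w$ (taken pointwise) equals $-\frac{1}{4f^2}|f^2 X - 2f\nabla f|^2 = -\frac14 f^2|X|^2 + f\esiz X,\nabla f\esde - |\nabla f|^2$. Hence \eqref{afterparts} yields
\begin{equation}\label{almostdone}
\int_{\Sigma}\left( -|\nabla f|^2 + f\esiz X,\nabla f\esde - \frac{|X|^2}{4} f^2 + q f^2 \right) \leq 0 .
\end{equation}

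Finally I would rewrite the cross term: $\int f\esiz X,\nabla f\esde = \frac12 \int \esiz X,\nabla(f^2)\esde = -\frac12\int f^2\,{\rm div}(X)$ by the divergence theorem (legitimate since $f$ has compact support). Substituting this into \eqref{almostdone} gives exactly $\int_{\Sigma}\left(-|\nabla f|^2 + Q f^2\right)\leq 0$ with $Q = q - \frac12{\rm div}(X) - \frac{|X|^2}{4}$, and integrating $\int f\Delta f = -\int |\nabla f|^2$ by parts once more identifies the left side with $\int_\Sigma f\overline{L}f$, so $-\int_\Sigma f\overline{L}f\geq 0$ as claimed. The only mild subtlety — and the step I would be most careful about — is the regularity: the computation of $\Delta w$ needs $u\in C^2$ (which is assumed) and the pointwise minimization of the quadratic form is just completing the square, so no regularity of $X$ beyond $C^0$ enters except in the term ${\rm div}(X)$, which is why $Q$ is only asserted to be continuous; one should note that ${\rm div}(X)$ appears only after integration against the smooth compactly supported $f^2$, so it may be interpreted distributionally if needed, but under the standing hypotheses (in the application $X=X_{\cH}$ is as smooth as $\cH$ allows) this is not an issue. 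I do not expect any genuine obstacle here; the result is a routine but useful reformulation, and the main thing to get right is bookkeeping of signs in the two integrations by parts.
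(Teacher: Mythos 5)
Your proof is correct and follows essentially the same route as the paper's: substitute $\phi=\log u$, complete the square in the drift direction, and integrate against $f^2$ with an integration by parts that moves $\tfrac12\operatorname{div}X$ into the potential. The only difference is bookkeeping — the paper first packages $Y=\nabla\phi+X/2$ and derives the pointwise inequality $|Y|^2+Q\leq -\operatorname{div}Y$ before testing against $f^2$ and using Young's inequality, whereas you test against $f^2$ first and then minimize the resulting quadratic form in $\nabla\phi$ pointwise; the two orderings produce the same estimate.
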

\begin{proof}
The condition $Lu\leq 0$ for $u>0$ can be rewritten as 
\begin{equation}\label{gas1}
\Delta u+\frac{u}{4}|X+2 \nabla\log u |^2-\frac{u}{4} |X|^2- u |\nabla\log u|^2+q u \leq 0.
\end{equation}
As $u>0$ is positive, writing $u=e^{\phi}$ we have from \eqref{gas1} 
$$
\Delta \phi+\frac{1}{4}|X+2\nabla \phi|^2-\frac{|X|^2}{4}+q\leq 0.
$$
Adding and subtracting $\frac{1}{2} \div X$ we have

\begin{equation}\label{gas2}
\div(\nabla \phi + X/2)+|\nabla \phi + X/2|^2-\frac{|X|^2}{4}-\frac{1}{2} \div X+q\leq 0.
\end{equation}
So, if we define now $Y:=\nabla \phi + X/2$ and $Q:=q-\frac{|X|^2}{4}-\frac{1}{2}\div X$, we have from \eqref{gas2}
\begin{equation}\label{gas3}
|Y|^2+Q\leq -\div Y.
\end{equation}
If we consider now $f\in C_0^2(\Sigma)$, then we obtain from \eqref{gas3} 
$$\def\arraystretch{1.3}\begin{array}{rcl}
f^2|Y|^2+f^2Q&\leq& -f^2 \div Y\\
&=&-\div (f^2 Y)+2f\langle\nabla f,Y\rangle\\
&\leq&-\div(f^2 Y)+2|f||\nabla f||Y|\\
&\leq&-\div(f^2 Y)+|\nabla f|^2+f^2|Y|^2.
\end{array}$$
By integrating the above inequality and using that $f$ has compact support, we obtain
$$
\int_{\Sigma}|\nabla f|^2-Qf^2\geq 0,\ \forall f\in C_0^2(\Sigma),
$$
which is equivalent to $-\overline{L}$ being a non-negative operator. This completes the proof.

\end{proof}



Let us also recall the next result for $n=2$, implicitly proved in \cite{LR} as a variation of the arguments introduced by Fischer-Colbrie in \cite{FC}; see Theorem 2.8 in \cite{MPR}. 


\begin{lem}\label{lemlo}
Let $\Sigma$ denote a Riemannian surface, and assume that the Schrodinger operator $-(\Delta -K+c)$ is non-negative for some constant $c>0$, where $K$ denotes the Gaussian curvature of $\Sigma$. Then for every $p\in \Sigma$ we have $$d(p,\parc \Sigma)\leq \frac{2\pi}{\sqrt{3c}},$$ i.e. the radius of any (open) geodesic ball centered at $p$ that is relatively compact in $\Sigma$ is at most $2\pi/\sqrt{3c}$.
\end{lem}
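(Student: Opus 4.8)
The plan is to argue by contradiction. Assume there is a point $p\in\Sigma$ and a radius $R>2\pi/\sqrt{3c}$ such that the geodesic ball $B_R(p)$ is relatively compact in $\Sigma$, and let us contradict the non-negativity of $-(\Delta-K+c)$. The hypothesis passes to the universal cover of $\Sigma$ (a positive supersolution of $\Delta-K+c$ on $\Sigma$, guaranteed by Fischer-Colbrie's theorem \cite{FC}, pulls back to one on the cover), while a lift of $p$ still carries a relatively compact geodesic ball of radius $R$; so we may assume $\Sigma$ simply connected. We may moreover work inside the star-shaped domain on which $\exp_p$ has no conjugate points: if a conjugate point occurred at distance $\le 2\pi/\sqrt{3c}$, then $B_R(p)$ could not be embedded that far, leaving nothing to prove. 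Thus every $B_r(p)$ with $r<R$ is a topological disk, and Gauss--Bonnet applies to it.

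The heart of the argument is to reduce the two-dimensional inequality to a one-dimensional one. Write $L(r):=\mathrm{length}(\partial B_r(p))$, so that $L(0)=0$, $L'(0)=2\pi$, and, by Gauss--Bonnet and the coarea formula, $\int_{B_r(p)}K\,dA=2\pi-L'(r)$ and $\int_{\partial B_r(p)}K\,ds=-L''(r)$. Inserting radial test functions $f(x)=\phi(r(x))$ with $\phi(R)=0$, $\phi'(0)=0$ into the non-negativity inequality, and using $|\nabla r|=1$ together with these identities, it becomes
\[
\int_0^R(\phi')^2\,L\,dr-\int_0^R\phi^2\,L''\,dr\ \ge\ c\int_0^R\phi^2\,L\,dr .
\]
Apply now the Liouville-type substitution $\psi:=\sqrt{L}\,\phi$, adapted to the degeneracy $L(0)=0$; using $L'(0)=2\pi$ to dispose of the endpoint contribution at $p$, a direct computation rewrites the inequality, for $\psi$ supported in $(0,R)$, as the non-negativity on $(0,R)$ of the Schrödinger operator $-\psi''-V\psi$ with
\[
V\ =\ c+\frac{L''}{2L}+\frac{(L')^2}{4L^2}\ =\ c+\frac12\Big(\frac{L'}{L}\Big)'+\frac34\Big(\frac{L'}{L}\Big)^2 ,
\]
the identity $\tfrac{L''}{2L}+\tfrac{(L')^2}{4L^2}=\tfrac12(L'/L)'+\tfrac34(L'/L)^2$ being what produces the coefficient $\tfrac34$.

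To conclude, one bounds this operator from below. Integrating the $\tfrac12(L'/L)'\psi^2$ term by parts, completing the square in the pair $(\psi',\psi)$, and using the Gauss--Bonnet relation once more to absorb the residual curvature contribution, one reaches
\[
\int_0^R(\psi')^2\,dr\ \ge\ \frac{3c}{4}\int_0^R\psi^2\,dr\qquad\text{for every }\psi\in C_0^\infty\big((0,R)\big).
\]
Since the first Dirichlet eigenvalue of $-d^2/dr^2$ on $(0,R)$ equals $\pi^2/R^2$, this forces $\pi^2/R^2\ge\tfrac{3c}{4}$, i.e. $R\le 2\pi/\sqrt{3c}$, contradicting the choice of $R$. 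Hence $d(p,\partial\Sigma)\le 2\pi/\sqrt{3c}$.

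The step I expect to be the main obstacle is exactly this last one. The potential $V$ is \emph{not} pointwise bounded below by $\tfrac{3c}{4}$ --- where $K$ is large and positive the term $L''/2L$ (a negative multiple of the average of $K$ over $\partial B_r(p)$) is very negative --- so obtaining the \emph{sharp} constant $\tfrac34$, rather than a weaker one, requires combining the radial test functions, the normalization $L'(0)=2\pi$, and the Gauss--Bonnet identities in precisely the right way, in the spirit of the arguments of Fischer-Colbrie \cite{FC} and López-Ros \cite{LR}. A subordinate technical point is the cut-locus bookkeeping that keeps the polar-coordinate and Gauss--Bonnet manipulations valid all the way out to radius $R$, which is handled by the reduction to the universal cover and the conjugate-point remark above.
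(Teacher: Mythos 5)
The paper does not prove this lemma: it simply cites L\'opez--Ros \cite{LR}, Fischer-Colbrie \cite{FC}, and Theorem 2.8 of \cite{MPR} and moves on. Your proposal is an attempt to reconstruct the argument, so I will assess it on its own merits.

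Your outline is essentially correct and is the Fischer-Colbrie/L\'opez-Ros argument: radial test functions, coarea and Gauss--Bonnet to pass to the one-variable stability inequality in $L(r)$, the substitution $\psi=\sqrt{L}\,\phi$ that turns the potential into $c+\tfrac12(L'/L)'+\tfrac34(L'/L)^2$, and a 1D Dirichlet eigenvalue comparison. The crucial step you call ``completing the square'' is the one you correctly flag as the crux, and it does close: writing $W=L'/L$, the $\psi$-form of the stability inequality for $\psi\in C_0^\infty((0,R))$ reads
\[
\int_0^R(\psi')^2\,dr\ \ge\ c\int_0^R\psi^2\,dr+\frac34\int_0^R W^2\psi^2\,dr+\frac12\int_0^R W'\psi^2\,dr.
\]
Integrating $\tfrac12\int W'\psi^2=-\int W\psi\psi'$ by parts and then applying Young's inequality $|W\psi\psi'|\le\tfrac13(\psi')^2+\tfrac34 W^2\psi^2$ --- the weight $\tfrac13$ is precisely what makes the $W^2\psi^2$ contributions cancel --- gives $\tfrac43\int(\psi')^2\ge c\int\psi^2$, i.e.\ $\int(\psi')^2\ge\tfrac{3c}{4}\int\psi^2$ for all $\psi\in C_0^\infty((0,R))$, hence $\pi^2/R^2\ge 3c/4$ and $R\le 2\pi/\sqrt{3c}$. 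So the argument is sound, and the $\tfrac34$ is extracted exactly where you suspect.

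Two small inaccuracies worth noting. First, you say you use $L'(0)=2\pi$ to dispose of the endpoint contribution at $p$ and that Gauss--Bonnet is needed ``once more to absorb the residual curvature''; in fact, once you restrict to $\psi\in C_0^\infty((0,R))$ (which you may, since every such $\psi$ equals $\sqrt{L}\,\phi$ with $\phi=\psi/\sqrt{L}\in C_0^\infty((0,R))$, $L$ being strictly positive away from $r=0$) there is no endpoint contribution and no further Gauss--Bonnet input is needed --- the whole force of the argument is the weighted Young step above. Using $L'(0)=2\pi$ becomes relevant only in the variant that keeps $\phi(0)\neq 0$, and there one must be careful: with $\phi(0)\neq 0$ the separate integrals $\int(\psi')^2$ and $\int\tfrac{(L')^2}{4L^2}\psi^2$ each diverge near $r=0$ (only their combination, which equals $\int L(\phi')^2$, is finite), so one cannot cleanly separate a $\int(\psi')^2$ term as in the display; the compactly-supported version sidesteps this. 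Second, your universal-cover reduction is fine but deserves a sentence justifying that $B_R(\tilde p)$ remains relatively compact upstairs (an Arzel\`a--Ascoli argument applied to lifted geodesics of bounded length); as stated it reads as if it were automatic.
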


As a consequence of Lemma \ref{lemgs} and Lemma \ref{lemlo} we can deduce a distance estimate for stable $\cH$-surfaces in $\R^3$, with $\cH\in C^2(\S^2)$. In Theorem \ref{deste} below, $\nabla_{\S}$, $\Delta_{\S}$ and $\nabla^2_{\S}$ denote, respectively, the gradient, Laplacian and Hessian operators on the unit sphere $\S^2$.

We should emphasize regarding Theorem \ref{deste} below that condition \eqref{estrella} in Theorem \ref{deste} cannot be eliminated altogether, or just substituted by the weaker condition $\cH>0$. Indeed, there exist complete strictly convex $\cH$-graphs in $\R^3$ for some choices of $\cH>0$; see \cite{BGM}.

\begin{teo}\label{deste}
Let $\cH\in C^2(\S^2)$ satisfy on $\S^2$ the inequality
 \begin{equation}\label{estrella}
 3\cH^2 + {\rm det} (\nabla_{\S}^2 \cH) + \cH \Delta_{\S} \cH - |\nabla_{\S} \cH|^2 - \frac{1}{4}(\Delta_{\S} \cH)^2 \geq c>0
 \end{equation}
for some constant $c>0$. Then, for every stable $\cH$-surface $\Sigma$ in $\R^3$, and for every $p\in \Sigma$, we have $$d(p,\parc \Sigma)\leq \frac{2\pi}{\sqrt{3c}}.$$
\end{teo}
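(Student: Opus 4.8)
The plan is to run the non-variational analogue of the Ros--Rosenberg radius argument, using the two lemmas just stated. Since $\Sigma$ is a stable $\cH$-surface, there is $u\in C^2(\Sigma)$, $u>0$, with $\cL u\le 0$, where by Definition~\ref{def:stabH} (with $n=2$) one has $\cL=\Delta+\esiz X_{\cH},\nabla\cdot\esde+|\sigma|^2$ and $X_{\cH}(p)=2\,\nabla_{\S}\cH(\eta(p))$. First I would apply Lemma~\ref{lemgs} with $q=|\sigma|^2$, $X=X_{\cH}$, obtaining the Schr\"odinger operator $\overline{\cL}=\Delta+Q$, with $Q=|\sigma|^2-\tfrac12\div(X_{\cH})-\tfrac14|X_{\cH}|^2$ and $-\overline{\cL}\ge 0$. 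The whole proof then reduces to the \emph{pointwise} estimate $Q\ge -K+c$ on $\Sigma$, where $K$ is the Gaussian curvature: indeed, from $-\overline{\cL}\ge 0$ one gets $\int_{\Sigma}|\nabla f|^2\ge\int_{\Sigma}Qf^2\ge\int_{\Sigma}(c-K)f^2$ for all $f\in C_0^2(\Sigma)$, so $-(\Delta-K+c)$ is non-negative, and Lemma~\ref{lemlo} yields $d(p,\parc\Sigma)\le 2\pi/\sqrt{3c}$ for every $p$, which is exactly the assertion.

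To establish $Q+K\ge c$ I would work at a fixed $p\in\Sigma$ and identify $T_p\Sigma=T_{\eta(p)}\S^2=\eta(p)^{\perp}$. Writing $\kappa_1,\kappa_2$ for the principal curvatures, the $\cH$-surface condition together with the paper's sign convention ($H(\S^2,\text{inner})=1$) gives $\kappa_1+\kappa_2=2H=2\cH(\eta(p))$, while the Gauss equation gives $K=\kappa_1\kappa_2$; hence $|\sigma|^2=4\cH^2-2K$. Also $|X_{\cH}|^2=4|\nabla_{\S}\cH|^2$, all spherical quantities being evaluated at $\eta(p)$. The only non-routine term is $\div(X_{\cH})$: differentiating the $\R^3$-valued map $\nabla_{\S}\cH\circ\eta$ along $\Sigma$, using the Gauss formula $D_w(\nabla_{\S}\cH)=\nabla^2_{\S}\cH(w)+(\text{vector normal to }\S^2)$ and $d\eta=-S$ (with $S$ the shape operator), the normal components drop out because $T_p\Sigma=\eta(p)^{\perp}$, and one obtains $\div(X_{\cH})=-2\,\tr(S\circ\nabla^2_{\S}\cH)$. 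Substituting, $Q+K=3\cH^2-K+\tr(S\circ\nabla^2_{\S}\cH)-|\nabla_{\S}\cH|^2$.

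The algebraic heart is to absorb the surface-dependent quantity $\tr(S\circ\nabla^2_{\S}\cH)$. I would split $S=\cH\,\mathrm{Id}+S_0$ and $\nabla^2_{\S}\cH=\tfrac12(\Delta_{\S}\cH)\,\mathrm{Id}+A_0$ into trace and trace-free parts, so that $\tr(S\circ\nabla^2_{\S}\cH)=\cH\,\Delta_{\S}\cH+\tr(S_0A_0)$; in a basis of principal directions $S_0=\mathrm{diag}(D,-D)$ with $D=\tfrac12(\kappa_1-\kappa_2)$ and $A_0=\left(\begin{smallmatrix}\mu&\nu\\\nu&-\mu\end{smallmatrix}\right)$, whence $\tr(S_0A_0)=2D\mu$ and $K=\cH^2-D^2$. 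Thus $Q+K=3\cH^2+(D^2+2D\mu)+\cH\,\Delta_{\S}\cH-|\nabla_{\S}\cH|^2$, and completing the square, $D^2+2D\mu\ge-\mu^2\ge-(\mu^2+\nu^2)=\det A_0=\det(\nabla^2_{\S}\cH)-\tfrac14(\Delta_{\S}\cH)^2$, the last equality being $\det(A_0+\lambda\,\mathrm{Id})=\det A_0+\lambda^2$ for trace-free $2\times 2$ matrices with $\lambda=\tfrac12\Delta_{\S}\cH$. Hence
\[
Q+K\ \ge\ 3\cH^2+\det(\nabla^2_{\S}\cH)+\cH\,\Delta_{\S}\cH-|\nabla_{\S}\cH|^2-\tfrac14(\Delta_{\S}\cH)^2\ \ge\ c
\]
by hypothesis \eqref{estrella}, which completes the reduction and the proof.

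I expect the genuine obstacle to be precisely this last step: recognizing that the a priori surface-dependent cross term $D^2+2D\mu$ admits a lower bound purely in terms of $\cH$ (this is in fact how condition \eqref{estrella} gets ``discovered''), and carrying out the trace/trace-free bookkeeping so that \emph{exactly} the combination in \eqref{estrella} emerges. The other delicate point is keeping every sign convention consistent --- the orientation of $\eta$, the relation $S=-d\eta$, and $H=\cH\circ\eta$ --- since a slip there would produce $-\cH\,\Delta_{\S}\cH$ in place of $+\cH\,\Delta_{\S}\cH$ and break the match with \eqref{estrella}.
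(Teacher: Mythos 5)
Your proposal is correct and follows essentially the same route as the paper: apply the Galloway--Schoen reduction (Lemma \ref{lemgs}) to turn stability into the non-negativity of the Schrodinger operator $-(\Delta+Q_{\cH})$, compute $\div_{\Sigma}(X_{\cH})$ by differentiating $\nabla_{\S}\cH\circ\eta$ via the Weingarten map, and verify the pointwise bound $Q_{\cH}+K\ge c$ by completing a square in the shape operator --- your trace/trace-free bookkeeping with $(D,\mu,\nu)$ is the same manipulation the paper performs directly with $\kappa_i$ and $\alfa_i=(\nabla_{\S}^2\cH)(e_i,e_i)$, and your estimate $D^2+2D\mu\ge-\mu^2-\nu^2=\det(\nabla_{\S}^2\cH)-\tfrac14(\Delta_{\S}\cH)^2$ is precisely the paper's inequality \eqref{desir}. (Minor arithmetic slip: the intermediate line ``$Q+K=3\cH^2-K+\tr(S\circ\nabla_{\S}^2\cH)-|\nabla_{\S}\cH|^2$'' should read $4\cH^2-K+\cdots$, but your next displayed expression $3\cH^2+(D^2+2D\mu)+\cH\,\Delta_{\S}\cH-|\nabla_{\S}\cH|^2$ is again correct, so nothing downstream is affected.)
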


\begin{proof} Define
\begin{equation}\label{xah}
Q_{\cH}:= |\sigma|^2 -\frac{1}{2}{\rm div}_{\Sigma} (X_{\cH}) - \frac{|X_{\cH}|^2}{4}, \hspace{1cm} X_{\cH}:=2 \nabla_{\S} \cH(\eta) \in \X(\Sigma).
\end{equation}
Here, as usual, $|\sigma|$ denotes the norm of the second fundamental form of $\Sigma$ and $\eta$ its unit normal. Let $\cL$ denote the stability operator of $\Sigma$, defined in \eqref{stabo}. Since $\Sigma$ is stable, it follows by Lemma \ref{lemgs} that the operator $-\overline{\cL}:=-(\Delta + Q_{\cH})$ is non-negative on $\Sigma$. Assume for the moment that
 \begin{equation}\label{desiQ}
Q_{\cH}\geq -K+c
 \end{equation} 
holds, where $K$ is the Gaussian curvature of $\Sigma$. In that case, the operator $-(\Delta -K+c)$ will also be non-negative, and Theorem \ref{deste} will follow directly from Lemma \ref{lemlo}. 

Thus, it only remains to prove \eqref{desiQ}. To do this, we first compute ${\rm div}_{\Sigma}(X_{\cH})$. First, note that if $V\in \X(\Sigma)$ and $\nabla,\overline\nabla$ denote the Riemannian connections of $\Sigma$ and $\R^3$, we have
\begin{equation}\label{dive1}\def\arraystretch{1.5}\begin{array}{lll} \esiz \nabla_V X_{\cH},V\esde &= & 2 \esiz \nabla_V (\nabla_{\S} \cH(\eta)),V\esde = 2 \esiz \overline\nabla_V (\nabla_{\S} \cH(\eta)),V\esde\\
& = & 2 \esiz d(\nabla_{\S} \cH)_{\eta} (d\eta(V)),V\esde = 2 (\nabla_{\S}^2 \cH)_{\eta} (V, d\eta (V)).\end{array}\end{equation} Here, one should observe for the last equality that, since $\esiz \eta,V\esde=0$, then both $V,d\eta(V)$ are tangent to $\S^2$ at the point $\eta$.

Consider now at any $p\in \Sigma$ an orthonormal basis $\{e_1,e_2\}$ of principal directions of $\Sigma$, and let $\kappa_1,\kappa_2$ denote their associated principal curvatures. It follows then from \eqref{dive1} that, at $p$,
 \begin{equation}\label{dive2}
 {\rm div}_{\Sigma} (X_{\cH})= \sum_{i=1}^2 \esiz \nabla_{e_i} X_{\cH}, e_i\esde =-2 \sum_{i=1}^2 \kappa_i\alfa_i, \hspace{1cm} \alfa_i:= (\nabla_{\S}^2 \cH)_{\eta(p)} (e_i, e_i).
 \end{equation}
Then, by \eqref{xah}, \eqref{dive2} and the identity $|\sigma|^2= 4 H_{\Sigma}^2-2K$, we obtain at $p$:

\begin{equation}\label{calQ}
 \def\arraystretch{2.2} \begin{array}{lll}
Q_{\cH}&=& 4H_{\Sigma}^2-2K  - |\nabla_{\S} \cH(\eta)|^2+\kappa_1\alfa_1 +\kappa_2\alfa_2  \\ & = & 3H_{\Sigma}^2 - K +\displaystyle \frac{(\kappa_1 - \kappa_2)^2}{4} - |\nabla_{\S} \cH(\eta)|^2 \\ & & + \displaystyle\frac{(\kappa_1+\kappa_2)(\alfa_1+\alfa_2)}{2} +  \frac{(\kappa_1-\kappa_2)(\alfa_1-\alfa_2)}{2}  \\ & = & 3H_{\Sigma}^2 - K  - |\nabla_{\S} \cH(\eta)|^2 + \displaystyle\left(\frac{\kappa_1-\kappa_2}{2}  + \frac{\alfa_1-\alfa_2}{2}\right) ^2 \\ & & + H_{\Sigma}(\alfa_1+\alfa_2) - \displaystyle \left(\frac{\alfa_1-\alfa_2}{2}\right)^2\end{array}
\end{equation}
Next, note that, by definition of $\alfa_i$ in \eqref{dive2}, we have $\alfa_1+\alfa_2 = \Delta_{\S} \cH(\eta)$, and 
 \begin{equation}\label{desir}
 \left(\frac{\alfa_1-\alfa_2}{2}\right)^2 \leq \frac{(\Delta_{\S} \cH (\eta))^2}{4} - {\rm det}(\nabla_{\S}^2 \cH)_{\eta}.
 \end{equation}
Thus, it follows from \eqref{calQ}, \eqref{desir} and the identity $H_{\Sigma}=\cH(\eta)$ that
 \begin{equation}\label{calQ2}
 Q_{\cH} + K \geq 3 \cH(\eta)^2 + \cH(\eta) \Delta_{\S} \cH(\eta) -\frac{(\Delta_{\S} \cH (\eta))^2}{4} + {\rm det}(\nabla_{\S}^2 \cH)_{\eta} - |\nabla_{\S} \cH(\eta)|^2.
 \end{equation}
Since by hypothesis, $\cH$ satisfies \eqref{estrella} on $\S^2$, we conclude from \eqref{calQ2} that $Q_{\cH}+K \geq c$, which is \eqref{desiQ}. Hence, the proof of Theorem \ref{deste} is complete.

\end{proof}

Let us state two simple remarks that clarify the nature of condition \eqref{estrella}.

\begin{remark}
Let $\cH\in C^2(\S^2)$. Then, there is some $H_0>0$ such that for any $\alfa>H_0$, the function $\cH^*:=\cH+\alfa \in C^2(\S^2)$ satisfies equation \eqref{estrella}.
\end{remark}

\begin{remark}
Let $\cH\in C^2(\S^2)$ satisfy \eqref{estrella}. Then $\cH(x)\neq 0$ at every $x\in \S^2$. Indeed, if $\cH(x)=0$, equation \eqref{estrella} at $x$ turns into $${\rm det}(\nabla_{\S}^2 \cH)_{x}-\frac{(\Delta_{\S} \cH (x))^2}{4}  \geq c+ |\nabla_{\S} \cH(x)|^2>0 ,
$$ which is not possible since the left hand-side of this inequality is always non-positive.
\end{remark}

The next result follows directly from Theorem \ref{deste}. It generalizes the well known fact that there are no complete, stable, non-minimal CMC surfaces in $\R^3$.

\begin{cor}\label{cor:estrella}
Let $\cH\in C^2(\S^2)$ satisfy condition \eqref{estrella}. Then, there are no complete stable $\cH$-surfaces in $\R^3$.
\end{cor}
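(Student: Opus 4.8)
The plan is to obtain this as an immediate consequence of the radius estimate in Theorem \ref{deste}, by contradiction. Suppose, then, that $\Sigma$ is a complete stable $\cH$-surface in $\R^3$ for some $\cH\in C^2(\S^2)$ satisfying \eqref{estrella} with constant $c>0$. Since $\Sigma$ is complete, the Hopf--Rinow theorem guarantees that every open geodesic ball $B_{\Sigma}(p,R)$ centered at an arbitrary point $p\in\Sigma$ is relatively compact in $\Sigma$ for all $R>0$; as $\parc\Sigma=\emptyset$, this says precisely that $d(p,\parc\Sigma)=+\infty$ in the sense of Lemma \ref{lemlo}.

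On the other hand, Theorem \ref{deste} applies to $\Sigma$ without change, since $\Sigma$ is a stable $\cH$-surface and $\cH$ satisfies \eqref{estrella}. It yields, for that very same point $p$, the finite bound $d(p,\parc\Sigma)\leq 2\pi/\sqrt{3c}$. These two facts are contradictory, so no complete stable $\cH$-surface can exist, which proves Corollary \ref{cor:estrella}.

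The only point needing a word of care is the reading of $d(p,\parc\Sigma)$ when $\parc\Sigma$ is empty: following the statement of Lemma \ref{lemlo}, it is to be understood as the supremum of the radii $R$ for which the geodesic ball $B_{\Sigma}(p,R)$ is relatively compact in $\Sigma$, and completeness forces this supremum to be $+\infty$. Beyond this bookkeeping there is no genuine obstacle --- all the analytic content (the verification of inequality \eqref{desiQ}, together with the Galloway--Schoen and L\'opez--Ros type arguments behind Lemmas \ref{lemgs} and \ref{lemlo}) has already been carried out in the proof of Theorem \ref{deste}, and the corollary merely records that a uniform finite radius estimate is incompatible with the existence of a complete surface.
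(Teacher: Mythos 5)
Your argument is exactly what the paper intends when it states that the corollary "follows directly from Theorem \ref{deste}": completeness forces $d(p,\parc\Sigma)=+\infty$, contradicting the uniform finite bound $2\pi/\sqrt{3c}$ from the radius estimate. Correct, and the same route; the clarifying remark about how to read $d(p,\parc\Sigma)$ when $\parc\Sigma=\emptyset$ is a reasonable and accurate piece of bookkeeping.
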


Recall that any $\cH$-graph in $\R^3$ is stable. Thus, we get as an immediate consequence of Theorem \ref{deste}:
\begin{cor}
Let $\cH\in C^2(\S^2)$ satisfy condition \eqref{estrella}. Then, for any $v\in \S^2$ there exist uniform height estimates for $\cH$-graphs in $\R^3$ in the $v$-direction (see Definition \ref{uhai} for this notion).
\end{cor}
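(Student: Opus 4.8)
The plan is to read the height estimate off the \emph{intrinsic} radius estimate of Theorem \ref{deste}, using that every $\cH$-graph is stable and that a unit-speed curve on a graph cannot change height faster than its length. No new analysis is required.

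First I would fix $v\in\S^2$ and take an arbitrary graph $\Sigma$ in $\R^3$ of prescribed mean curvature $\cH$, oriented towards $v$, with $\partial\Sigma\subset\Pi:=v^{\perp}$. Since $\Sigma$ is a graph oriented towards $v$, its unit normal $\eta$ satisfies $\nu:=\langle\eta,v\rangle>0$ on $\Sigma$, and $\cL\nu=0$ by Corollary \ref{nueq}; hence, taking $u=\nu$ in Definition \ref{def:stable}, $\Sigma$ is a stable $\cH$-surface. Replacing $\Sigma$ by the connected component of a given point if necessary, we may also assume $\Sigma$ is connected; its boundary is then non-empty, since otherwise $\Sigma$ would be a complete stable $\cH$-surface, contradicting Corollary \ref{cor:estrella} (so, in particular, there are no entire $\cH$-graphs under hypothesis \eqref{estrella}, and the statement is not vacuous merely for that reason). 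Now Theorem \ref{deste}, applied with the constant $c>0$ furnished by \eqref{estrella}, gives
\[
d_{\Sigma}(p,\partial\Sigma)\leq \frac{2\pi}{\sqrt{3c}}\qquad\text{for every }p\in\Sigma.
\]

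Next I would convert this into a bound on the Euclidean height of $p$ over $\Pi=v^{\perp}$, which is $|\langle p,v\rangle|$. Given $\varepsilon>0$, choose a unit-speed curve $\gamma:[0,\ell]\to\Sigma$ with $\gamma(0)=p$, $\gamma(\ell)\in\partial\Sigma$ and $\ell<2\pi/\sqrt{3c}+\varepsilon$. The height function $h(s):=\langle\gamma(s),v\rangle$ satisfies $|h'(s)|=|\langle\gamma'(s),v\rangle|\leq|\gamma'(s)|=1$, so $|h(0)-h(\ell)|\leq\ell$; since $\gamma(\ell)\in\Pi$ we have $h(\ell)=0$, and therefore $|\langle p,v\rangle|=|h(0)|<2\pi/\sqrt{3c}+\varepsilon$. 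Letting $\varepsilon\to0$ yields $|\langle p,v\rangle|\leq 2\pi/\sqrt{3c}$ for all $p\in\Sigma$, which is precisely the uniform height estimate of Definition \ref{uhai}, with $C=2\pi/\sqrt{3c}$ depending only on $\cH$ (and, incidentally, not on the direction $v$).

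I do not expect any genuine difficulty here: the entire content of the corollary is Theorem \ref{deste}, and the remaining steps are elementary. The only points that deserve a word of care are the non-emptiness of $\partial\Sigma$ on each component (guaranteed by Corollary \ref{cor:estrella}) and the fact that $d_{\Sigma}(p,\partial\Sigma)$ need not be realized by a minimizing geodesic but is always approximated to within any $\varepsilon>0$ by admissible paths ending on $\partial\Sigma$; both are standard.
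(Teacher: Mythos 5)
Your proof is correct and follows exactly the route the paper has in mind: the paper states this corollary as an immediate consequence of Theorem \ref{deste} after recalling that $\cH$-graphs are stable, without writing out any further argument. You have correctly supplied the two elementary points the paper leaves tacit, namely the $1$-Lipschitz property of the height function $\langle\cdot,v\rangle$ along unit-speed paths on $\Sigma$ (so that the intrinsic radius bound $2\pi/\sqrt{3c}$ from Theorem \ref{deste} immediately caps the height), and the fact that Corollary \ref{cor:estrella} rules out components of $\Sigma$ with empty boundary, so the distance to $\partial\Sigma$ is actually finite.
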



We next obtain a curvature estimate for stable $\cH$-surfaces in $\R^3$, for the case that $\cH$ satisfies condition \eqref{estrella}. It generalizes to $\cH$-surfaces the classical curvature estimate obtained by Schoen in \cite{Sc} for minimal surfaces; see also Bérard and Hauswirth \cite{BH} for the constant mean curvature case.

\begin{teo}\label{testicu}
Let $a,c>0$. Then, exists a constant $C=C(a,c)>0$ such that the following statement is true: if $\cH\in C^2(\S^2)$ satisfies 
 \begin{equation}\label{boundH}
|\nabla_{\S} \cH | + |\nabla_{\S}^2 \cH| \leq a
 \end{equation} on $\S^2$, and $\Sigma$ is any stable $\cH$-surface in $\R^3$ satisfying \eqref{estrella} for the constant $c$, then for any $p\in \Sigma$ the following estimate holds:
$$
|\sigma(p)|d_{\Sigma}(p,\partial \Sigma)\leq C.
$$
\end{teo}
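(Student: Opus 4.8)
The plan is to argue by contradiction using a standard rescaling (blow-up) argument, exactly in the spirit of the proof of Theorem \ref{th:curv}, but now applied to stable $\cH$-surfaces and using the radius estimate of Theorem \ref{deste} as the source of contradiction. Suppose the statement fails; then for fixed $a,c>0$ there exist functions $\cH_n\in C^2(\S^2)$ satisfying \eqref{boundH} for $a$ and \eqref{estrella} for $c$, stable $\cH_n$-surfaces $\Sigma_n$ in $\R^3$, and points $p_n\in \Sigma_n$ with $|\sigma_n(p_n)|\,d_{\Sigma_n}(p_n,\partial\Sigma_n)>n$. As in Theorem \ref{th:curv}, I would introduce the function $h_n(x)=|\sigma_n(x)|\,d_{\Sigma_n}(x,\partial D_n)$ on a suitable compact intrinsic disk $D_n\subset\Sigma_n$, pick a maximum point $q_n$, set $\lambda_n=|\sigma_n(q_n)|$, and rescale by $\lambda_n$ to obtain surfaces $M_n=\lambda_n\,\Sigma_n$ (restricted to an intrinsic ball of radius $r_n/2$), translated so that $q_n\mapsto 0$. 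The same computation \eqref{unisec} shows $|\hat\sigma_n|\le 2$ on these balls, with $|\hat\sigma_n(0)|=1$, and $\lambda_n r_n\to\infty$ so the intrinsic radii of $M_n$ diverge.

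The key point is to identify what equation the rescaled surfaces solve. Each $M_n$ is a surface of prescribed mean curvature $\cH_n^{(\lambda_n)}(x):=\lambda_n^{-1}\cH_n(x)$; note these rescaled prescribed functions converge to $0$ in $C^2(\S^2)$ because $\cH_n$ is uniformly bounded in $C^2$ (from \eqref{boundH} together with, say, the positivity forced by the Remark after Theorem \ref{deste}, or simply by extracting a convergent subsequence of the $\cH_n$ in $C^1$ — only $C^{1}$ control is needed for the compactness). Applying the compactness Theorem \ref{compa} (with $k=1$), a subsequence of the $M_n$ converges uniformly on compact sets in $C^{2}$ to a complete surface $M_\infty$ of bounded curvature with $|\sigma_{M_\infty}(0)|=1$; since the rescaled mean curvature functions tend to $0$, $M_\infty$ is \emph{minimal}. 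So far this is just Theorem \ref{th:curv} without the Gauss-map hypothesis \eqref{asscur2}; the new ingredient that replaces \eqref{asscur2} is stability.

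The main obstacle — and the heart of the argument — is to show $M_\infty$ is stable as a minimal surface, i.e. that $-(\Delta - K)$ is a non-negative operator on $M_\infty$, so that by the Fischer-Colbrie–Schoen theorem $M_\infty$ must be a plane, contradicting $|\sigma_{M_\infty}(0)|=1$. The natural route is to pass stability to the limit through the pointwise inequality $Q_{\cH_n}\ge -K_n + c$ established inside the proof of Theorem \ref{deste} (this is exactly \eqref{desiQ}), which holds on each $\Sigma_n$. Under the rescaling by $\lambda_n$: $K_n$ scales like $\lambda_n^{-2}K$, $|\sigma_n|^2$ like $\lambda_n^{-2}|\hat\sigma_n|^2$, and the terms involving $X_{\cH_n}=2\nabla_\S\cH_n(\eta)$ — namely $\tfrac12\operatorname{div}_{\Sigma}(X_{\cH_n})$ and $\tfrac14|X_{\cH_n}|^2$ — scale like $\lambda_n^{-1}$ and are uniformly bounded by $\eqref{boundH}$, hence are $O(\lambda_n^{-1})$ in the rescaled metric; and the constant $c$ becomes $\lambda_n^{-2}c>0$. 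Thus on $M_n$ one has $|\hat\sigma_n|^2 \ge -K_{M_n} + o(1)$ uniformly on compact sets, so in the limit $|\sigma_{M_\infty}|^2 \ge -K_{M_\infty}$ pointwise. Since on a minimal surface in $\R^3$ one has $|\sigma|^2=-2K$ and the Jacobi operator is $\Delta+|\sigma|^2=\Delta-2K$, I need the sharper conclusion that $-(\Delta-2K)$, equivalently $-(\Delta-K)$ up to the trivial sign, is non-negative on $M_\infty$; concretely, I would show that for each $f\in C^2_0(M_\infty)$, $\int |\nabla f|^2 + K f^2 \ge 0$, by writing $f$ as a limit of functions supported in the compact graphical pieces, applying Lemma \ref{lemgs} on $\Sigma_n$ to get $\int_{M_n}|\nabla f|^2 - Q_{\cH_n}f^2\ge 0$, using $Q_{\cH_n}\ge -K_{M_n}+O(\lambda_n^{-1})$, and letting $n\to\infty$ — the error terms vanish because $f$ has fixed compact support and the metrics converge in $C^2$. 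Then $M_\infty$ is a complete stable minimal surface in $\R^3$, hence a plane by do Carmo–Peng / Fischer-Colbrie–Schoen, contradicting $|\sigma_{M_\infty}(0)|=1$. This contradiction proves Theorem \ref{testicu}, with $C=C(a,c)$ since every constant entering the argument (the bound $2$ on $|\hat\sigma_n|$, the compactness constants from Theorem \ref{compa}, and the scaling of $Q_{\cH_n}$) depends only on $a$ and $c$.
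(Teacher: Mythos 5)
Your overall strategy is the same as the paper's: argue by contradiction, blow up as in Theorem \ref{th:curv}, show the rescaled surfaces converge to a complete stable surface, and conclude it is a plane, contradicting $|\sigma(0)|=1$. But there is a genuine gap in the normalisation of the rescaled prescribed mean curvature function, and the paper's proof introduces a specific device to fix exactly this issue.

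The gap is in the sentence claiming that $\cH_n^{(\lambda_n)}:=\lambda_n^{-1}\cH_n\to 0$ in $C^2(\S^2)$ ``because $\cH_n$ is uniformly bounded in $C^2$.'' Hypothesis \eqref{boundH} bounds $|\nabla_\S\cH_n|+|\nabla_\S^2\cH_n|$, \emph{not} $|\cH_n|$ itself, and condition \eqref{estrella} is satisfied, e.g., by the unbounded family $\cH_n\equiv n$. You therefore cannot extract a $C^1$-convergent subsequence of the $\cH_n$, nor conclude that $\lambda_n^{-1}\cH_n\to 0$. Consequently there is no reason the limit $M_\infty$ should be \emph{minimal}, so the appeal to Fischer-Colbrie--Schoen is not yet available. (One could repair this by observing that the bound $|\hat\sigma_n|\leq 2$ on the rescaled second fundamental form forces the mean curvature of $M_n$ to be bounded, and combined with $|\nabla_\S(\lambda_n^{-1}\cH_n)|\leq a/\lambda_n\to 0$ this shows $\lambda_n^{-1}\cH_n$ is uniformly bounded on all of $\S^2$ and converges to a constant along a subsequence; the limit would then be a complete stable CMC surface and one would invoke the classification of those. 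But this requires an argument you did not give.)

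The paper sidesteps this cleanly with a different normalisation: it rescales by $\lambda_n^*:=\max\{\lambda_n,H_n\}$, where $H_n$ is the maximum of the mean curvature on $B_{\Sigma_n}(q_n,r_n/2)$. This guarantees $\cH_n^*\leq 1$ on the Gauss image of $M_n^*$ for free, while the derivative bound \eqref{boundH} still gives $|\nabla_\S\cH_n^*|+|\nabla_\S^2\cH_n^*|\to 0$, so a subsequence of $\cH_n^*$ converges in $C^1$ to a genuine constant $\cH_\infty\in[0,1]$. The limit is then a complete stable CMC surface, hence a plane with $\cH_\infty=0$. This time the contradiction is not immediate, because the rescaling by $\lambda_n^*$ may have killed the curvature at the origin; the paper finishes with an extra step: $|\hat\sigma_n(0)|=\lambda_n/\lambda_n^*\to 0$, so $\lambda_n^*=H_n$ eventually, which forces $\max\cH_n^*=1$, contradicting $\cH_\infty=0$. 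This final step is an essential part of the argument and is absent from your proposal.

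One further, more minor, imprecision: by weakening $Q_{\cH_n}$ to $-K_n+c$ (inequality \eqref{desiQ}) before passing to the limit, you only obtain $\int|\nabla f|^2+Kf^2\geq 0$ on $M_\infty$, i.e.\ the operator $-(\Delta-K)$ is non-negative. For a minimal surface this is \emph{weaker} than Jacobi stability, which is non-negativity of $-(\Delta-2K)$; the Fischer-Colbrie--Schoen theorem as usually stated uses the latter, so you would have to quote a ``$\delta$-stability'' result (Pogorelov, Kawai, Colding--Minicozzi) instead. The paper avoids this by passing the full $Q_{\cH_n^*}$ to the limit: since $X_{\cH_n^*}$ and its divergence tend to zero uniformly on compacts, $Q_{\cH_n^*}\to|\sigma_{\Sigma^*}|^2$ and one gets the actual Jacobi operator in the limit, hence genuine stability.
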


\begin{proof}
Arguing by contradiction, assume that there exists a sequence of functions $\cH_n\in C^2(\S^2)$ satisfying \eqref{boundH} for the constant $a$, a sequence $\Sigma_n$ of stable $\cH_n$-surfaces that satisfy \eqref{estrella} for the constant $c$, and points $p_n\in\Sigma_n$ satisfying 
\begin{equation}\label{cest1}
|\sigma_n(p_n)|d_{\Sigma_n}(p_n,\partial\Sigma_n)>n,
\end{equation}
where $\sigma_n$ denotes the second fundamental form of $\Sigma_n$. First, note that \eqref{cest1} implies that $|\sigma_n(p_n)|$ diverges to $\8$, since by Theorem \ref{deste} we have for all $n$ a uniform upper bound $d_{\Sigma_n}(p_n,\partial\Sigma_n)\leq d$ for some $d>0$. 
At this point, we can use the arguments in the proof of Theorem \ref{th:curv}, with some modifications.
Let $d_n:=d_{\Sigma_n}(p_n,\partial\Sigma_n)$, and consider the compact intrinsic balls $D_n=B_{\Sigma_n}(p_n,d_n/2)$. Let $q_n\in D_n$ be the maximum of the function 
$$
h_n(x)=|\sigma_n(x)|d_{\Sigma_n}(x,\partial D_n)
$$
on $\overline{D_n}$. Clearly $q_n$ is an interior point of $D_n$, as $h_n$ vanishes on $\parc D_n$. Then, if we denote $\lambda_n=|\sigma_n(q_n)|$ and
$r_n=d_{\Sigma_n}(q_n,\partial D_n)$, we have by \eqref{cest1}:
\begin{equation}\label{cest3}
\landa_n r_n = h_n(q_n) \geq h_n(p_n)  \to \8 \hspace{0.5cm} \text{(as $n\to \8$)}.
\end{equation}
Let now $H_n$ denote the maximum of the mean curvature function of $\Sigma_n$ restricted to $B_{\Sigma_n}(q_n,r_n/2)$, and define $\lambda^*_n:=\max\{\lambda_n, H_n\}$. Consider the rescalings by $\landa_n^*$ of the immersed surfaces $B_{\Sigma_n}(q_n,r_n/2)\subset\Sigma_n$, in a similar fashion to the argument in Theorem \ref{th:curv}, and denote these rescalings by $M_n:=\lambda^*_n B_{\Sigma_n}(q_n,r_n/2)$. Let $q_n^*$ be the point in $M_n$ that corresponds to $q_n\in \Sigma_n$, and consider the translated surfaces $M_n^*:=M_n-q_n^*$ which take the points $q_n^*$ to the origin. Note that the distance in $M_n^*$ from the origin to $\parc M_n^*$ is equal to $\landa^*_n r_n/2$, and so it diverges to $\8$ as $n\to \8$,  by \eqref{cest3}.

The surfaces $M_n^*$ have uniformly bounded second fundamental form, since for any $z_n\in B_{\Sigma_n}(q_n,r_n/2)$ we have 
$$
\frac{|\sigma_n(z_n)|}{\lambda^*_n}\leq \frac{|\sigma_n(z_n)|}{\lambda_n} =\frac{h_n(z_n)}{\lambda_n d_{\Sigma_n}(z_n,\partial D_n)}\leq\frac{d_{\Sigma_n}(q_n,\partial D_n)}{d_{\Sigma_n}(z_n,\partial D_n)}\leq 2,
$$ where in the last inequality we have used \eqref{destr}, that also holds in the present context.

Also, each $M_n^*$ is a surface of prescribed mean curvature $\cH_n^*\in C^2(\S^2)$, where $\cH_n^*(x):=\cH_n(x)/\landa_n^*$. By definition of $\landa_n^*$, we have $\cH_n^* \leq 1$ on the Gauss map image $\Omega_n\subset \S^2$ of $M_n^*$ in $\S^2$, for all $n$. Also, note that as $n\to \8$ we have by \eqref{boundH} that \begin{equation}\label{boundH2} |\nabla_{\S} \cH_n^* | + |\nabla_{\S}^2 \cH_n^* | \to 0
 \end{equation}
globally on $\S^2$. Consequently, a subsequence of the $\cH_n^*$ converges in the $C^1(\S^2)$ topology to a constant $\cH_{\8} \in [0,1]$. It follows then from Theorem \ref{compa} that a subsequence of the $M_n^*$ converges uniformly on compact sets in the $C^3$ topology to a complete surface $\Sigma^*$ in $\R^3$ of constant mean curvature $\cH_{\8}$ that passes through the origin. We consider the connected component of $\Sigma^*$ that passes through the origin, which will still be denoted by $\Sigma^*$.

Since each $\Sigma_n$ is stable, it follows that each $M_n^*$ is a stable $\cH_n^*$-surface in $\R^3$. So, by Lemma \ref{lemgs} (see also the beginning of the proof of Theorem \ref{deste}), we see that the Schrodinger operator $-\overline{\cL}_n :=-(\Delta_n + Q_{\cH^*_n})$ is non-negative on $M_n^*$; here $\Delta_n$ denotes the Laplacian operator in $M_n^*$, and $Q_{\cH^*_n}$ is given by \eqref{xah}. 

Clearly, the Schrodinger operators $\overline{\cL}_n$ converge to the Jacobi operator $\cL_{\8}$ of the CMC surface $\Sigma^*$. It follows then that $-\cL_{\8}$ is non-negative, i.e. $\Sigma^*$ is stable. As $\Sigma^*$ is also complete, then necessarily $\cH_{\8}=0$ and $\Sigma^*$ is a plane. In particular, the norm of the second fundamental forms of the $M_n^*$ at the origin converge to zero, which implies that $\landa_n/\landa_n^*\to 0$. In particular, by the definition of $\landa_n^*$, for $n$ large enough we have $\landa_n^*=H_n$. This implies that $\cH_n^*(x_n)=1$ for some $x_n\in \S^2$, for each $n$ large enough. This is a contradiction with $\cH_{\8}=0$, which completes the proof of Theorem \ref{testicu}.

\end{proof}

\def\refname{References}

\hspace{0.4cm}

\noindent The authors were partially supported by
MICINN-FEDER, Grant No. MTM2016-80313-P, Junta de Andalucía Grant No.
FQM325, 
and Programa de Apoyo
a la Investigacion, Fundacion Seneca-Agencia de Ciencia y
Tecnologia Region de Murcia, reference 19461/PI/14.

\end{document}